\numberwithin{equation}{section}
\newtheorem{thm}[equation]{Theorem}
\newtheorem{cor}[equation]{Corollary}
\newtheorem{lm}[equation]{Lemma}
\newtheorem{prp}[equation]{Proposition}
\theoremstyle{definition}
\newtheorem{df}[equation]{Definition}
\newtheorem{prb}[equation]{Problem}
\newtheorem{conj}[equation]{Conjecture}
\theoremstyle{remark}
\newtheorem{rem}[equation]{Remark}
\DeclareMathOperator{\BN}{\mathbb{N}} % naturals
\DeclareMathOperator{\BZ}{\mathbb{Z}} % integers
\DeclareMathOperator{\BQ}{\mathbb{Q}} % rationals
\DeclareMathOperator{\BR}{\mathbb{R}} % reals
\DeclareMathOperator{\BC}{\mathbb{C}} % complex numbers
\DeclareMathOperator{\BH}{\mathbb{H}} % quaternions
\DeclareMathOperator{\BE}{\mathbb{E}} % integral quaternions
\DeclareMathOperator{\BG}{\mathbb{G}} % Gauss-integers
\DeclareMathOperator{\Norm}{N}        % norm
\DeclareMathOperator{\Twin}{T}
\newcommand{\konj}{\overline}
\newcommand{\norm}[1]{\Vert{#1}\Vert}
\newcommand{\vep}{\varepsilon}
\begin{document}
\title{Sums of squares and orthogonal integral vectors}
\author{Lee M. Goswick, Emil W. Kiss,
G\'abor Moussong, N\'andor Sim\'anyi}
\address[Lee M. Goswick]{The University of Alabama at Birmingham\\
  Department of Mathematics\\
  1300 University Blvd., Suite 452\\
  Birmingham, AL 35294 U.S.A.}
\address[Emil W. Kiss]{E{\"o}tv{\"o}s University\\
  Department of Algebra and Number Theory\\
  1117 Budapest, P{\'a}zm{\'a}ny P{\'e}ter
  s{\'e}t{\'a}ny 1/c\\
  Hungary}
\address[G\'abor Moussong]{E{\"o}tv{\"o}s University\\
  Department of Geometry\\
  1117 Budapest, P{\'a}zm{\'a}ny P{\'e}ter
  s{\'e}t{\'a}ny 1/c\\
  Hungary}
\address[N\'andor Sim\'anyi]{The University of Alabama at Birmingham\\
  Department of Mathematics\\
  1300 University Blvd., Suite 452\\
  Birmingham, AL 35294 U.S.A.}
\email[Lee M. Goswick]{goswick@amadeus.math.uab.edu}
\email[Emil W. Kiss]{ewkiss@math.elte.hu}
\email[G\'abor Moussong]{mg@math.elte.hu}
\email[N\'andor Sim\'anyi]{simanyi@math.uab.edu}
\thanks{Second author supported by Hungarian Nat.\ Sci.\
  Found.\ (OTKA) Grant No.\ NK72523, third author supported
  by Hungarian Nat.\ Sci.\ Found.\ (OTKA) Grant No.\
  T047102, fourth author supported by the National Science
  Foundation, grants DMS-0457168 and DMS-0800538.}

\subjclass{11R52, 52C07}

\keywords{Cubic lattice, Euler rotation matrix, Hurwitz
  integral quaternion}

\begin{abstract}
  Two vectors in $\BZ^3$ are called \emph{twins} if they are
  orthogonal and have the same length. The paper describes
  twin pairs using cubic lattices, and counts the number of
  twin pairs with a given length. Integers $M$ with the
  property that each integral vector with length $\sqrt{M}$
  has a twin are called twin-complete. They are completely
  characterized modulo a famous conjecture in number theory.
  The main tool is the decomposition theory of Hurwitz
  integral quaternions. Throughout the paper we made a
  concerted effort to keep the exposition as elementary as
  possible.
\end{abstract}

\maketitle

\parskip=0pt plus 2.5pt

\section{Introduction and main results}
\label{sec_intr}

\noindent
An \emph{icube} in $\BZ^n$ of dimension $k$ is a sequence
$(v_1,\ldots,v_k)$ of $k$ nonzero vectors in~$\BZ^n$ that
are pairwise orthogonal and have the same length. The
subgroup generated by $v_1,\ldots,v_k$ is called the
corresponding \emph{cubic lattice}. The common \emph{length}
of the vectors~$v_i$ is denoted by~$\norm{v_i}$, and is
called the \emph{edge length} of the icube. By the
\emph{norm} of $v_i$ we shall mean~$\Norm(v_i)=\norm{v_i}^2$
(a similar convention is used also for Gaussian integers and
quaternions). A {\em twin pair} is a $2$-dimensional icube
in $\BZ^3$.

In this paper we investigate how icubes can be
\emph{constructed, counted,} and \emph{extended.} We shall
consider the case $n=3$. The main results are the following.
\begin{itemize}
\item Theorem~\ref{count_twins} counts all twin pairs with a
  given edge length.
\item Proposition~\ref{icube_length_int} and
  Corollary~\ref{twin_to_icube} show that a twin pair can be
  extended to a $3$-dimensional icube if and only if its
  edge length is an integer.
\item Theorem~\ref{primitive_unique_big_thm} and
  Corollary~\ref{primitive_unique_big_cor} investigate the
  existence and uniqueness of \hbox{$3$-dimensional} cubic
  lattices containing a single integral vector and the
  extension of single vectors to twins.
\item Theorem~\ref{twin_compl_full} and
  Corollary~\ref{twin_compl_list} characterize 
  twin-complete numbers.
\item The above results are based on the following
  representation theorems:
  \begin{itemize}
  \item[] $k=1$\,: Theorem~\ref{3repr} and
    Theorem~\ref{twin_quad_pow};
  \item[] $k=2$\,: Theorem~\ref{repr_twins};
  \item[] $k=3$\,: Theorem~\ref{e_matrix_prop} and
    Corollary~\ref{sarkozy_cor}.
  \end{itemize}
\end{itemize}
In the rest of the Introduction, we put these results into
context.

\goodbreak

The problem of \emph{construction} and \emph{counting} for
$3$-dimensional icubes in $\BZ^3$ has been solved by
A.~S\'ark\"ozy \cite{Sar61}. To formulate his main result,
we use a construction discovered by Euler.  The following
well-known facts show how to obtain rotations in~$\BR^3$.
Throughout the paper we identify $v=(v_1,v_2,v_3)\in\BR^3$
with the pure quaternion $V(v)=v_1 i+v_2 j+ v_3 k$.

\begin{thm}[see \cite{CS03}, Section~3]\label{euler_matrix}
 Let\/ $\BH^{*} = \BH\setminus\{0\}$ denote the set of 
 nonzero quaternions, and
 $V$ the space of all quaternions with zero real part. 
 For $\alpha \in \BH^{*}$, let $M(\alpha)$ denote the matrix of the
 transformation $\alpha(\,\cdot\,)\alpha^{-1}:\; V \rightarrow V$
 expressed in the standard basis ($i,\,j,\,k$).
 Then there exists a surjective linear representation
 ${\rho: \BH^{*} \rightarrow \text{\sl SO}(3,\BR)}$ such that
\begin{enumerate}
\item[$(1)$] $\ker(\rho) = \BR^{*}$.
\item[$(2)$]The matrix of $\rho(\alpha)$ in the standard basis
 ($i,\,j,\,k$) is
\begin{equation*}
M(\alpha)=
\frac{1}{d}\begin{pmatrix}
m^2+n^2-p^2-q^2  &    -2mq+2np          &   2mp+2nq\\
2mq+2np          &    m^2-n^2+p^2-q^2   &   -2mn+2pq\\
-2mp+2nq         &    2mn+2pq           &   m^2-n^2-p^2+q^2
\end{pmatrix}\,,
\end{equation*}
\end{enumerate}
where $\alpha = m + n i + p j + q k$ and $d = m^2+n^2+p^2+q^2$.
We note that the restriction of the representation $\rho$ to the
unit sphere $S^3$ of\/ $\BH$ is the adjoint representation of\/
$S^3$ with the kernel $\{1,\, -1\}$, being also the universal covering
of the real projective space $\text{\sl SO}(3,\BR)$.
\end{thm}

In what follows, we shall concern ourselves with the \emph{Euler
  matrix} $E(\alpha) = dM(\alpha)$. We are interested in $E(\alpha)$
when its entries are integers. Call such a matrix \emph{primitive} if
the greatest common divisor of its nine entries is~$1$. Similarly, an
icube (or a single integral vector) is primitive if the $nk$ entries
are relatively prime.

\begin{thm}[S\'ark\"ozy, \cite{Sar61}]\label{sarkozy}
  If $m,n,p,q\in\BZ$, then $E(m+ni+pj+qk)$ is primitive if
  and only if $\gcd(m,n,p,q)=1$ and $d$~is odd. Every
  primitive $3$-dimensional icube in~$\BZ^3$ can be obtained
  from such an Euler matrix by permuting columns and
  changing the sign of the third column if necessary.
\end{thm}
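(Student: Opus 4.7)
I would prove the theorem in two parts, following its two clauses.

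For the first clause, I analyze the greatest common divisor $g$ of the nine entries of $E(\alpha)$. If $h := \gcd(m,n,p,q) > 1$, every entry (a quadratic form in $m,n,p,q$) is divisible by $h^2$, so $g > 1$. If $d$ is even, the identity $x^2 \equiv x \pmod{2}$ makes each diagonal entry $\equiv m+n+p+q \equiv d \equiv 0 \pmod{2}$, while every off-diagonal entry is visibly even, so $g \ge 2$. Conversely, assume $\gcd(m,n,p,q) = 1$, $d$ odd, and that a prime $\ell$ divides all nine entries. The case $\ell = 2$ is ruled out since every diagonal is $\equiv d \equiv 1 \pmod{2}$. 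For odd $\ell$, Theorem~\ref{euler_matrix}(3) tells us the squared length of any column of $E(\alpha)$ is $d^2$, whence $\ell^2 \mid d^2$ and $\ell \mid d$; subtracting $d$ from each diagonal entry gives $\ell \mid 2(p^2+q^2), 2(n^2+q^2), 2(n^2+p^2)$, and symmetrizing over the three diagonals yields $\ell \mid 2m^2, 2n^2, 2p^2, 2q^2$, so $\ell \mid m, n, p, q$, contradicting $\gcd = 1$.

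For the second clause, let $M = (v_1 \mid v_2 \mid v_3)$ be a primitive $3$-icube of edge length $d$. Proposition~\ref{icube_length_int} gives $d \in \mathbb{Z}$, and the parity argument above shows $d$ must be odd: if $d$ were even, each column norm $d^2$ would be $\equiv 0 \pmod{4}$, forcing every entry of each column to be even (since $x^2 \bmod 4 \in \{0,1\}$), contradicting primitivity. Since $M^T M = d^2 I$ we have $\det M = \pm d^3$; after negating $v_3$ if necessary --- a permitted operation --- we may assume $\det M = d^3$, so $R := M/d \in SO(3)$. Theorem~\ref{euler_matrix}(2) supplies a unit quaternion $\alpha_0$ with $R = R(\alpha_0)$, and $\alpha := \sqrt{d}\,\alpha_0$ satisfies $M = E(\alpha)$ with $|\alpha|^2 = d$. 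It remains to arrange, by permuting the columns of $M$, that $\alpha$ becomes a Lipschitz integer; the condition $\gcd(m,n,p,q) = 1$ then follows automatically from the primitivity of $M$ and the first clause.

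The main obstacle is this integrality step. Reading off $M = E(\alpha)$ gives $4m^2 = d + \tr M$ (and analogues for $4n^2, 4p^2, 4q^2$), as well as $4mn = a_{32} - a_{23}$ (and its five analogues), so all ten of $4m^2,\dots,4q^2, 4mn,\dots,4pq$ are integers. The six identities $(4m^2)(4n^2) = (4mn)^2$, etc., force the four integers $4m^2,\dots,4q^2$ to share a common squarefree part $s$ dividing $4d$; and $s = 1$ is precisely the condition that $\alpha$ is a Hurwitz integer (coordinates all in $\mathbb{Z}$ or all in $\mathbb{Z} + \tfrac{1}{2}$). To conclude I would invoke the Hurwitz-quaternion machinery developed in Section~\ref{sec_quat}: a primitive integer rotation matrix with odd $d$ lifts to a primitive Hurwitz integer of norm $d$, and a half-integer Hurwitz integer of norm $d$ can always be multiplied by an appropriate Hurwitz unit to become Lipschitz. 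On the matrix side this multiplication realizes precisely as a right-multiplication by a signed permutation matrix of the form the theorem permits --- a column permutation, possibly combined with a sign change on the third column --- completing the proof.
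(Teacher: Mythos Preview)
Your first-clause argument is correct and essentially identical to the paper's. Your second-clause outline also tracks the paper's approach (Corollary~\ref{sarkozy_cor} via Theorem~\ref{e_matrix_prop}) through the integrality analysis: you correctly extract that $4m^2,4n^2,4p^2,4q^2$ and the six mixed products $4mn,\ldots,4pq$ are integers, and that the nonzero ones among $4m^2,\ldots,4q^2$ share a common squarefree part~$s$.

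The gap is your next assertion, that ``a primitive integer rotation matrix with odd $d$ lifts to a primitive Hurwitz integer of norm $d$''. This is false: take $\alpha=(1+i+2j)/\sqrt{2}$, for which
\[
E(\alpha)=\begin{pmatrix}-1&2&2\\2&2&-1\\-2&1&-2\end{pmatrix}
\]
is a primitive icube with $d=3$, yet $\alpha$ is not Hurwitz (here $s=2$, not $1$). Nothing in Section~\ref{sec_quat} excludes this case. The paper's Theorem~\ref{e_matrix_prop} shows that primitivity of $E(\alpha)$ forces $s\in\{1,2\}$ and classifies $\alpha$ into three types: Lipschitz with odd norm ($s=1$); $\beta/\sqrt{2}$ with $\beta$ Lipschitz primitive and $\Norm(\beta)\equiv 2\pmod 4$ ($s=2$); and genuine half-integer Hurwitz ($s=1$). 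Your Hurwitz-unit step (essentially Lemma~\ref{alpha_sigma_lem}) converts only the third type to the first. For the second type the paper supplies Lemma~\ref{type_2_multiply_lem}: right-multiplication by $(1+u)/\sqrt{2}$ for suitable $u\in\{i,j,k\}$ makes $\alpha$ Lipschitz. This factor is \emph{not} a Hurwitz unit, but on the matrix side (Proposition~\ref{matrix_permute}(B)) it is a column transposition combined with one sign change --- still within the operations the theorem allows. Once you insert this missing case your argument is complete and coincides with the paper's.
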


This theorem is analyzed in Section~\ref{sec_matrix} and in
Corollary~\ref{primitive_icube_repr}. S\'ark\"ozy went on to
count all $3$-dimensional icubes in~$\BZ^3$ with a given
edge length~$d$.

\medskip

We next look at the question of {\it extension.} Our first
observation puts an obvious limitation on those vectors that
can be extended to a $3$-dimensional icube in~$\BZ^3$.

\begin{prp}\label{icube_length_int}
  Let $(v_1,\ldots,v_n)$ be an $n$-dimensional icube in
  $\BZ^n$. If $n$ is odd, then its edge length is an
  integer.
\end{prp}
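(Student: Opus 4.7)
My plan is to use the Gram matrix of the icube. Form the $n\times n$ integer matrix $A$ whose columns are $v_1,\dots,v_n$. Since the $v_i$ are pairwise orthogonal and share the common norm $M=\norm{v_i}^2\in\BZ$, we have $A^{T}A=M\,I_n$, and taking determinants gives $(\det A)^2=M^n$.

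Now I would exploit the parity of $n$. Writing $n=2k+1$, the identity becomes $M\cdot(M^k)^2=(\det A)^2$, so $M=\bigl(\det A/M^k\bigr)^2$. The number $\det A/M^k$ is rational, and its square is the integer $M$, hence it is itself an integer (a rational square root of an integer is an integer). Therefore $M$ is a perfect square and the edge length $\norm{v_i}=\sqrt{M}$ is an integer.

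The argument is quite short, so there is no real obstacle; the only thing to verify carefully is the final step that a rational number whose square is an integer must itself be an integer, which follows from unique factorization in $\BZ$ (or from the rational root theorem applied to $x^2-M$). Note also that the hypothesis that $n$ is odd is essential: when $n$ is even, $M^n$ is automatically a perfect square regardless of whether $M$ is, which matches the well-known fact that even-dimensional icubes (such as $(1,1),(1,-1)\in\BZ^2$) can have non-integer edge length.
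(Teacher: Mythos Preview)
Your proof is correct and follows essentially the same approach as the paper: form the integer matrix with columns $v_1,\dots,v_n$, use that its determinant (equivalently, the volume $d^n$) is an integer together with the fact that $d^2\in\BZ$, and exploit the odd parity of $n$ to conclude that $d$ is a rational whose square is an integer. The paper phrases this as $d=d^n/d^{n-1}$ with $d^{n-1}=(d^2)^{(n-1)/2}\in\BZ$, while you phrase it via $M=(\det A/M^k)^2$; these are the same computation in slightly different packaging.
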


\begin{proof}
  Let $d$ denote this length. The volume of the cube is
  $d^n$, which is an integer, since it is the determinant of
  the integer matrix $(v_1,\ldots,v_n)$. We have that $d^2$ 
  is also an integer, since the vectors have integer components,
  which implies $d^{n-1}$ is an integer. Therefore,
  $d=d^n/d^{n-1}$ is rational, and, moreover, an integer.
\end{proof}

This observation makes it easy to answer the following:
which $1$-dimensional icubes (that is, which vectors
in~$\BZ^3$) can be extended to a $3$-dimensional icube? It
turns out that the trivial necessary condition given by
Proposition~\ref{icube_length_int} is sufficient.

\begin{thm}\label{icube_exists}
  A vector in $\BZ^3$ is contained in a $3$-dimensional
  icube if and only if its length is an integer.
\end{thm}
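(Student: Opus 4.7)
The plan is to treat necessity and sufficiency separately. Necessity is immediate from Proposition~\ref{icube_length_int}: an icube containing $v$ has edge length $|v|$, which must be an integer for $n=3$.

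For sufficiency, let $v\in\BZ^3$ with $|v|=d\in\BZ_{>0}$. I would first reduce to the primitive case. Write $v=gv'$ with $v'$ primitive and $g=\gcd$ of the coordinates of $v$; then $g^2|v'|^2=d^2$ forces $g\mid d$, and $|v'|=d/g$ is a positive integer. Primitivity of $v'$ precludes all coordinates being even, so $|v'|^2\not\equiv 0\pmod 4$ and $|v'|$ is odd. If $v'$ lies in an icube, rescaling by $g$ gives one containing $v$, so we may assume $v=(a,b,c)$ is primitive with odd integer length~$d$. A mod-$8$ inspection then shows that exactly one of $a,b,c$ is odd; after permuting we take $a$ odd, so $b,c$ are even.

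The core step is to produce a Lipschitz integer quaternion $\alpha=m+ni+pj+qk$ with $|\alpha|^2=d$ and $\alpha i\konj\alpha=V(v)$; by Theorem~\ref{euler_matrix}, the columns of $E(\alpha)$ then form the required icube (the first column being $R(\alpha)(i)=V(v)$). Setting $s=m+ni$ and $t=q+ip\in\BZ[i]$, direct expansion of the quaternion equation together with $|\alpha|^2=d$ shows it to be equivalent to
\[
|s|^2=\tfrac{d+a}{2},\qquad |t|^2=\tfrac{d-a}{2},\qquad s\konj t=\tfrac{b+ic}{2}\,.
\]
The right-hand side $w=(b+ic)/2$ lies in $\BZ[i]$ (since $b,c$ are even) and has norm $(d^2-a^2)/4$, matching the product of the two prescribed norms. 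Thus the problem reduces to a multiplicative factorization $w=s\konj t$ in the UFD $\BZ[i]$ with prescribed norms on the factors.

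The key number-theoretic input is that every prime $p\equiv 3\pmod 4$ appears to an even power in each of $(d+a)/2$ and $(d-a)/2$. If $p\mid d+a$, then $p\mid b^2+c^2$; since $-1$ is a non-residue mod~$p$, $p$ divides both $b$ and $c$; primitivity of $v$ then forces $p\nmid a$, hence $p\nmid d-a$, and the automatically even $p$-adic valuation of $b^2+c^2=(d+a)(d-a)$ is borne entirely by $d+a$. Consequently $(d\pm a)/2$ are each sums of two squares, and I would distribute the Gaussian prime factors of $w$ between $s$ and $\konj t$ as follows: each inert prime goes wholly into one factor according to which of $(d\pm a)/2$ it divides; the ramified prime $1+i$ is apportioned via the $2$-adic valuations of $(d\pm a)/2$ (exactly one of which is positive, since their sum $d$ is odd); and each split prime $p=\pi\konj\pi$ admits a range of valid $\pi$-exponents in $s,\konj t$ that an elementary count shows is nonempty. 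Assembling such $(s,t)$ yields $\alpha$, and rescaling the resulting icube by~$g$ gives one containing the original $v$. The main obstacle is exactly this gluing step---verifying that the independently determined local conditions at each Gaussian prime are mutually consistent---where the oddness of~$d$ and the primitivity of~$v$ combine essentially.
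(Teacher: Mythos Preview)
Your proposal is correct and follows essentially the same route as the paper: reduce to the primitive case, observe that exactly one coordinate is odd, and then realize $v$ as the first column of an Euler matrix $E(\alpha)$ by solving the Gaussian-integer system $|s|^2=(d+a)/2$, $|t|^2=(d-a)/2$, $s\,\konj t=(b+ic)/2$. The paper's short proof in \S1 simply cites this Euler parameterization of Pythagorean quadruples as known, and its \S8 proves it via Lemma~\ref{gauss_prod_param}, taking $s=\gcd\big((d+a)/2,\gamma\big)$ directly rather than building $s$ and $t$ prime by prime as you do; but the two arguments are equivalent, and your key input---that primes $p\equiv 3\pmod 4$ cannot divide both $(d+a)/2$ and $(d-a)/2$---is exactly what makes the paper's coprimality hypothesis $\gcd(x,y,\gamma,\konj\gamma)=1$ hold. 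Your ``gluing'' worry is unfounded: for split primes the admissible range $\max(0,e-\beta)\le v_\pi(s)\le\min(e,\alpha)$ is nonempty since $e\le\alpha+\beta$, and the resulting $s\,\konj t$ agrees with $w$ at every Gaussian prime, hence equals $w$ after adjusting $s$ by a unit.
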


\begin{proof}
  Let $u=(a,b,c)$ be a primitive integral vector, whose
  length $d$ is an integer, so $a^2+b^2+c^2=d^2$. We may
  assume that $a$ is odd. It has been known since at
  least \cite{Car15} that in this case there
  exist $m,n,p,q\in\BZ$ such that $u$~is exactly the first
  column of the corresponding Euler matrix. Thus, the columns
  of this matrix extend $u$ to the desired icube.

  If $x$ is a non-primitive vector of integer length, then
  it can be written uniquely as~$gu$, where $g\in\BZ$ and
  $u\in\BZ^3$ is primitive.  Then the length of $u$ is also
  an integer, so it extends to an icube $(u,v,w)$. Therefore,
  $(gu,gv,gw)$ extends~$x=gu$.
\end{proof}

Theorem~\ref{3repr} also yields Theorem~\ref{icube_exists},
but by using quaternions (see Remark~\ref{3repr_rem}). When
$u$ is primitive, the cubic lattice generated by any
$3$-dimensional icube containing $u$ is always the same (see
Theorem~\ref{primitive_unique_big_thm}).

The next question is this: which $2$-dimensional icubes in
$\BZ^3$ can be extended to a $3$-dimensional icube? Again,
the necessary condition that the length be an integer is
sufficient (see Corollary~\ref{twin_to_icube}).

\medskip

Having surveyed a few results concerning $3$-dimensional
icubes, we now turn our attention to the $2$-dimensional
case. From now on by an icube we shall always mean a
$3$-dimensional icube in $\BZ^3$. The theorems described
below are our results.  The essence of them is that we
understand vectors and twin pairs by putting them into large
$3$-dimensional cubic lattices.

\begin{thm}\label{primitive_unique_big_thm}
  Let $x\in\BZ^3$ have norm $nm^2$, where $n$ is
  square-free. Then there exists an icube $(u,v,w)$ with
  edge length $m$ such that the corresponding cubic
  sublattice contains~$x$. If $x$ is primitive, then this
  cubic lattice is unique, and is given by an Euler matrix
  $E(\alpha)$, for a quaternion $\alpha$ with integer
  coefficients.
\end{thm}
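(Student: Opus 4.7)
Identify $x\in\BZ^3$ with the pure quaternion $X=V(x)$, so $|X|^2 = nm^2$. By Theorem~\ref{euler_matrix}, an icube $(u,v,w)$ of edge length $m$ corresponds to a quaternion $\alpha$ with $|\alpha|^2 = m$ via $(u,v,w)=E(\alpha)$, and the assertion that $x$ lies in the cubic sublattice $\BZ u+\BZ v+\BZ w$ becomes
\[
X \;=\; \alpha\,\beta\,\konj\alpha
\]
for some pure quaternion $\beta$ (necessarily of norm $n$, with integer coordinates when $\alpha$ is Lipschitz). Hence the theorem reduces to a factorization problem in the Hurwitz ring $\Ho$, which has a both-sided Euclidean algorithm and essentially unique factorization into Hurwitz primes.

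\textbf{Existence.} Since $m$ divides $|X|^2 = nm^2$, one extracts a left divisor $\alpha\in\Ho$ of norm $m$: $X=\alpha Y$ with $|Y|^2=nm$. Using purity $\konj X=-X$, a short computation yields $\alpha Y\alpha=-m\konj Y$, hence $Y\alpha$ is a pure Hurwitz quaternion, and $\beta:=Y\alpha/m$ is an \emph{a priori} rational pure quaternion satisfying $X = \alpha\beta\konj\alpha$ identically. The crucial step is to choose the left divisor $\alpha$ so that $\beta\in\Ho$, i.e.\ that $\konj\alpha$ right-divides $Y$. This is done prime by prime: for each odd prime $p\mid m$ the factorization $p=\pi\konj\pi$ in $\Ho$ offers a choice of left Hurwitz prime of norm $p$ dividing the running factor, and we pick the one whose conjugate simultaneously right-divides the residual cofactor; self-conjugacy $X=-\konj X$ is what guarantees such a matching exists, while the squarefree hypothesis on $n$ prevents the appearance of extra paired primes that could obstruct it. The prime $2$ contributes a single Hurwitz prime $1+i$ (up to units) and is handled analogously. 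If the $\alpha$ thus obtained happens to be a half-integer Hurwitz quaternion, we multiply it on the right by a suitable half-integer unit to produce a Lipschitz $\alpha'$ of the same norm; since the $24$ Hurwitz units act on $E(\alpha)$ by signed permutations of its columns (as is already visible from $E(\tfrac12(1+i+j+k))$), the cubic lattice is unaffected.

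\textbf{Uniqueness.} Suppose $\alpha_1,\alpha_2$ are two Lipschitz witnesses of $X=\alpha_i\beta_i\konj{\alpha_i}$ with $|\alpha_i|^2=m$, and let $x$ be primitive. Primitivity means that no rational prime divides $X$ in $\Ho$ (otherwise all coordinates of $x$ would share that prime factor), so the Hurwitz factorization of $X$ into a left divisor of norm $m$ and a cofactor of norm $nm$ is unique up to right units; consequently $\alpha_2=\alpha_1\varepsilon$ for some $\varepsilon\in\Ho$ with $|\varepsilon|^2=1$. Since $E(\alpha_1\varepsilon)=E(\alpha_1)E(\varepsilon)$ and $E(\varepsilon)$ runs through the $24$ rotational symmetries of the unit cube, the cubic lattices generated by $E(\alpha_1)$ and $E(\alpha_2)$ coincide. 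The principal obstacle in this plan is the matching argument in existence --- arranging that a single Hurwitz quaternion $\alpha$ of norm $m$ serves simultaneously as a left divisor of $X$ and (via its conjugate) as a right divisor of the cofactor --- which requires the prime-by-prime analysis in $\Ho$ sketched above and uses both the purity of $X$ and the squarefree hypothesis on $n$.
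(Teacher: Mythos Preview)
Your approach is essentially the paper's: identify $x$ with a pure quaternion $X$, and use the decomposition $X=\alpha\beta\,\konj\alpha$ in the Hurwitz ring (the paper's Theorem~\ref{3repr}) together with the action of units on Euler matrices. But there is a genuine gap in your uniqueness argument. You begin ``Suppose $\alpha_1,\alpha_2$ are two Lipschitz witnesses'', yet the theorem asks you to show that \emph{every} cubic sublattice of edge $m$ containing $x$ coincides with the constructed one. An arbitrary icube $(u,v,w)$ with integer entries is, by Theorem~\ref{euler_matrix}, of the form $E(\alpha)$ for some \emph{real} quaternion $\alpha$ with $\Norm(\alpha)=m$, but nothing so far guarantees that this $\alpha$ lies in the Hurwitz ring; indeed Theorem~\ref{e_matrix_prop} shows that type~$(2)$ quaternions $\beta/\sqrt2$ also produce primitive integral Euler matrices. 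The paper closes this gap explicitly: since $x$ is primitive the icube $(u,v,w)$ is primitive, and Corollary~\ref{sarkozy_cor} (equivalently Corollary~\ref{primitive_icube_repr}) lets one permute columns so that the generating quaternion becomes Lipschitz. Only after this reduction does your Hurwitz-factorization argument (comparing left divisors of $X$ of norm $m$) apply.

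Two smaller points. In the existence step your ``matching'' description is misleading: once a left Hurwitz prime $\pi$ of norm $p$ dividing $X$ is fixed, purity $\konj X=-X$ forces the right divisor of norm $p$ to be a left associate of $\konj\pi$ automatically (this is exactly Lemma~\ref{3repr_lm}); there is no further choice to make, and the squarefreeness of $n$ plays no role in this step --- it merely pins down $m$. Second, for a Hurwitz unit $\varepsilon$ the rotation $R(\varepsilon)$ ranges over only $12$ elements (the rotation group of the regular tetrahedron), not all $24$ rotational symmetries of the cube; the remaining $12$ come from the non-Hurwitz units $(1+u)/\sqrt2$ of Proposition~\ref{matrix_permute}. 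This slip is harmless for your conclusion, since each such $E(\varepsilon)$ is still a signed permutation matrix and therefore preserves the cubic lattice.
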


The existence part of this result follows from
Theorem~\ref{3repr} (see Remark~\ref{3repr_rem}). The
uniqueness part is proved at the end of
Section~\ref{sec_twin}, but it is also a consequence of
Corollary~\ref{sarkozy_cor} and Theorem~\ref{3repr}.

If $(u,v,w)$ is an icube and $a,b\in\BZ$, then $(av+bw,
-bv+aw)$ is a twin pair. Theorem~\ref{repr_twins} shows that
\emph{we get all twin pairs this way.} To count all twin
pairs, the corresponding cubic lattice should be made
unique. This is achieved in the same theorem by making the
cubic lattice as large as possible, but not necessarily
as large as in Theorem~\ref{primitive_unique_big_thm} above.
The difficulty is with non-primitive vectors, because there
is no trivial reduction to the primitive case.  For example,
$3(8,-10,9)$ and $7(4,5,2)$ are twins, and this is explained
by the cubic lattice $(u,v,w)=E(2i+j+4k)$, with $a=2$ and
$b=1$.  Theorem~\ref{twin_quad_pow} shows how a vector in
a cubic lattice can be divisible by a prime
``unexpectedly''. Theorem~\ref{repr_twins} describes, using
the language of quaternions, how large this common cubic
lattice really is for a given pair of twins. As an
application, we count all twin pairs with given norm in
Theorem~\ref{count_twins}.

The problem of extending single vectors to twins is more
difficult. A~consequence of our counting result
is that the common norm of twins is always the sum of two
squares.  The converse, however, is not true, as the example
of $(2,2,3)$ shows: its norm is $17=1^2+4^2$, but it does
not have a twin. The case of primitive vectors is
characterized by the following (the proof is at the end of
Section~\ref{sec_twin})

\begin{cor}\label{primitive_unique_big_cor}
  Using the notation of
  Theorem~\ref{primitive_unique_big_thm} suppose that $x$ is
  primitive and $x=au+bv+cw$.
\begin{enumerate}
\item[$(1)$] If none of $a$, $b$, $c$ is zero, then $x$ does
  not have a twin.
\item[$(2)$] If exactly one of $a$, $b$, $c$ is zero, then
  $x$ has exactly two twins. If, say, $a=0$, then these are
  $cv-bw$ and its negative.
\item[$(3)$] If two of $a$, $b$, $c$ are zero, then $x$ has
  exactly four twins, and so is contained in a unique icube.
  If, say, $a=b=0$, then $c=\pm 1$, $n=1$, and the twins of
  $x$ are $\pm u$ and $\pm v$. This case happens exactly
  when the norm of~$x$ is a square.
\end{enumerate}
\end{cor}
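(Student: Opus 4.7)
The plan is to reduce the classification of twins of $x$ to a Diophantine problem inside the cubic lattice $L$ given by Theorem~\ref{primitive_unique_big_thm}. Writing $x = au + bv + cw$, orthogonality of $u, v, w$ together with $\|u\|^2 = m^2$ yield $a^2 + b^2 + c^2 = n$. The main step is the containment assertion: every twin $y$ of $x$ in $\BZ^3$ actually lies in $L$. By Theorem~\ref{repr_twins}, taken in the canonical form that maximizes the cubic lattice, the pair $(x, y)$ arises as $(Av^* + Bw^*,\, -Bv^* + Aw^*)$ for some icube $(u^*, v^*, w^*)$ of edge length $m^*$ and coprime integers $A, B$, with $(A^2 + B^2)m^{*2} = nm^2$. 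Since $n$ is squarefree, $m^* \mid m$. One then argues that the primitivity of $x$ forces the maximal $m^*$ to equal $m$, so $\langle u^*, v^*, w^*\rangle$ is a cubic lattice of edge $m$ containing $x$, hence coincides with $L$ by the uniqueness in Theorem~\ref{primitive_unique_big_thm}.

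Once $y \in L$, write $y = a'u + b'v + c'w$. The twin conditions translate into $aa' + bb' + cc' = 0$ and $a'^2 + b'^2 + c'^2 = n$. Under the $L$-coordinate identification $au + bv + cw \leftrightarrow (a, b, c)$, this is precisely the twin problem in the standard lattice $\BZ^3$ for the vector $(a, b, c)$ of squarefree norm $n$. Applying Theorem~\ref{repr_twins} a second time inside $\BZ^3$: any icube representing a twin pair of squarefree common norm $n$ must have edge length $1$, hence be a signed permutation of the standard basis. Therefore $(a, b, c)$ necessarily has a zero entry, and the companion twin is uniquely determined up to sign by swapping the two nonzero entries and negating one.

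The three cases now follow immediately. In case~(1) the hypothesis $abc \ne 0$ contradicts the vanishing of a coordinate, so $x$ has no twin. In case~(2), say $a = 0$, the pair of twins of $(0, b, c)$ in $\BZ^3$ is $\pm(0, -c, b)$, which translates to the twins $\pm(cv - bw)$ of $x$ in $L$. In case~(3), say $a = b = 0$, primitivity of $x$ forces $c = \pm 1$ and hence $n = 1$; the twins of $(0, 0, \pm 1)$ in $\BZ^3$ are the four vectors $\pm e_1, \pm e_2$, which translate to the four twins $\pm u, \pm v$ of $x$, and $(x, u, v)$ is therefore the unique icube through $x$.

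The main obstacle is the containment step. The subtlety, illustrated by the vector $(0, 3, 4)$ whose twin pair with $(0, -4, 3)$ is represented by icubes of edge length~$1$ as well as edge length~$5$, is that non-maximal representations exist. One must verify, using the quaternionic machinery of Section~\ref{sec_twin}, that the maximal representation always attains $m^* = m$ when $x$ is primitive.
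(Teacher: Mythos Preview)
Your outline is sound and close to the paper's argument, but the containment step---which you correctly flag as the crux---is not actually proved in your proposal, and your description of the canonical parameterization contains an inaccuracy that matters here. The condition from Theorem~\ref{repr_twins}(2) is that $z=A+Bi$ be \emph{squarefree in $\BG$}, not that $\gcd(A,B)=1$; for instance $z=3$ is squarefree in~$\BG$ but $\gcd(3,0)=3$. With the correct hypothesis in place, the paper closes the gap in one line via Lemma~\ref{twin_N(z)_sqfree}: if $\theta=V(x)=\alpha_1 z_1 j\,\konj{\alpha_1}$ with $z_1$ squarefree and $\theta$ primitive, then $\Norm(z_1)$ equals the squarefree part of $\Norm(\theta)=nm^2$, namely $n$; hence $\Norm(\alpha_1)=m$, and the uniqueness in Theorem~\ref{primitive_unique_big_thm} forces the lattice of $\alpha_1$ to be~$L$. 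Your detour through ``$m^*\mid m$ and then maximality gives $m^*=m$'' is not needed once this lemma is invoked.

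After containment, your reduction differs from the paper's and is worth noting. The paper argues directly with quaternion units: writing two parameterizations $(\alpha_1,z_1)$ and $(\alpha_2,z_2)$ of twins of $\theta$, it uses Theorem~\ref{3repr} to get $\alpha_2=\alpha_1\vep^{-1}$ for a unit $\vep\in\BE$, and then Proposition~\ref{unit_group} to analyze how $\vep$ can permute the components of $z_1 j$; when $z_1 j$ has exactly one zero component this forces $\vep\in\{\pm1,\pm i,\pm j,\pm k\}$ and yields exactly the two twins $\pm\eta_1$. Your alternative---pass to $L$-coordinates and observe that the twin problem for $(a,b,c)$ of squarefree norm $n$ in $\BZ^3$ forces $\Norm(\alpha)=1$ in any parameterization, hence a signed permutation of the standard basis---is a clean shortcut and recovers Corollary~\ref{twin_div_nm_cor_m} on the way. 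One point to make explicit in case~(2): having shown that every twin of $(0,b,c)$ shares the same vanishing coordinate (because $zj$ and $zk$ have the same zero $i$-slot and $b,c\neq0$ pins down which coordinate that is), you should then finish in $\BZ^2$, where $(b',c')\perp(b,c)$ with equal norm forces $(b',c')=\pm(c,-b)$.
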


\begin{df}\label{twin_compl_df}
  An integer $M>0$ is \emph{twin-complete} if every vector
  in $\BZ^3$ with norm $M$ has a twin, and there is such a
  vector (that is, $M$ is not of the form $4^n(8k+7)$).
\end{df}

\begin{thm}\label{twin_compl_full}
  A positive integer is twin-complete if and only if its 
  square-free part is twin-complete. A positive square-free 
  integer is twin-complete if and only if it can be written 
  as a sum of two squares, but not as a sum of three positive 
  squares.
\end{thm}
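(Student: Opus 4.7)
The plan is to tackle the theorem's two sentences separately, using Corollary~\ref{primitive_unique_big_cor} as the main structural input. For the squarefree case, note that if $n$ is squarefree any $v\in\BZ^3$ with $\norm{v}^2 = n$ is automatically primitive (a common divisor $g>1$ would give $g^2\mid n$), so Theorem~\ref{primitive_unique_big_thm} identifies its unique cubic lattice with $\BZ^3$ in the standard basis and the coordinate triple with $v$ itself. Corollary~\ref{primitive_unique_big_cor} then says $v$ has a twin iff some component of $v$ is zero, so every vector of norm $n$ has a twin iff no representation $n = y_1^2 + y_2^2 + y_3^2$ uses three nonzero squares. The existence of vectors of norm $n$ at all, granted this condition, is equivalent to $n$ being a sum of at most two positive squares, which gives the second sentence.

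For the first sentence, write $M = nr^2$ with $n$ squarefree. In the easy direction, given $x\in\BZ^3$ with $\norm{x}^2 = M$, factor $x = hy$ with $y$ primitive; a prime-by-prime check using that $n$ is squarefree forces $h\mid r$, so $\norm{y}^2 = n(r/h)^2$. Theorem~\ref{primitive_unique_big_thm} writes $y = au + bv + cw$ in its unique cubic lattice of edge length $r/h$ with $a^2 + b^2 + c^2 = n$. If $n$ is twin-complete the vector $(a,b,c)\in\BZ^3$ has a twin $(a',b',c')$, so $y' := a'u + b'v + c'w$ is a twin of $y$ in the same lattice and $hy'$ is a twin of $x$.

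For the converse, by contrapositive: if $n \equiv 7 \pmod 8$ then $M$ also has the form $4^a(8b+7)$ and neither admits vectors. Otherwise pick $(a,b,c)$ with $a^2 + b^2 + c^2 = n$ and all three nonzero (by Corollary~\ref{primitive_unique_big_cor}(1)) and write $r = 2^k s$ with $s$ odd. A mod-$4^k$ parity argument---iterating that three squares summing to a multiple of $4$ must each be even---shows every twin of $2^k y$ lies in $2^k \BZ^3$ and equals $2^k$ times a twin of $y$, reducing us to finding a vector of norm $ns^2$ without a twin. For $s = 1$ the triple $(a,b,c)$ itself works. For odd $s>1$, Sárközy's theorem (Theorem~\ref{sarkozy}) supplies a primitive Hurwitz integer $\alpha$ with $|\alpha|^2 = s$, and the candidate $y := E(\alpha)(a,b,c)^T = \alpha \beta \konj\alpha$ (with $\beta = ai + bj + ck$) has norm $s^2 n$ and coordinates $(a,b,c)$ in the primitive cubic lattice generated by the columns of $E(\alpha)$; Corollary~\ref{primitive_unique_big_cor}(1) then delivers the no-twin conclusion \emph{as soon as $y$ is primitive in $\BZ^3$}.

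The primitivity of $y$ is the technical heart. For primes $p\nmid s$ it is automatic: $\alpha$ is invertible in $\mathcal{O}/p\mathcal{O} \cong M_2(\mathbb{F}_p)$ for odd $p$, so $\alpha \beta \konj\alpha \equiv 0 \pmod p$ would force $\beta \equiv 0$, contradicting $\gcd(a,b,c)=1$. For $p \mid s$ the reduction $\alpha \bmod p$ has rank one and the map $\beta \mapsto \alpha \beta \konj\alpha$ acquires a two-dimensional kernel on pure quaternions, so $\alpha$ must be chosen with $\beta \notin \ker$. The plan is to use the Hurwitz factorization theory developed in the quaternion sections of the paper: each prime factor of $\alpha$ of norm $p$ can be replaced by any of the $p+1$ non-associate Hurwitz primes of norm $p$, each giving a different mod-$p$ kernel, and this abundance of choices is more than enough to dodge the single codimension-one bad condition.
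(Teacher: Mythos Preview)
Your proposal is correct and follows essentially the same route as the paper, though you package the argument differently. Where the paper works directly with the twin parameterization (Theorem~\ref{repr_twins}) and Lemma~\ref{twin_N(z)_sqfree} to move between norm $n$ and norm $nm^2$, you instead lean on Corollary~\ref{primitive_unique_big_cor} as the structural input---but that corollary is itself a consequence of Theorem~\ref{repr_twins}, so the underlying machinery is identical. Your treatment of the squarefree case (noting that the cubic lattice is $\BZ^3$ itself, so the coordinates \emph{are} the lattice coordinates) is a clean way to read off the second sentence from Corollary~\ref{primitive_unique_big_cor}; the paper reaches the same conclusion by observing that $\Norm(\alpha)=1$ forces $\alpha$ to be a unit.

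For the reverse direction of the first sentence, the paper argues directly while you argue by contrapositive, but both reduce to the same technical point: one must produce, from a primitive $\beta$ of norm~$n$, an $\alpha$ of odd norm $s$ such that $\alpha\beta\,\konj\alpha$ is primitive. This is exactly Lemma~\ref{twin_yes_primitives}, which the paper proves via the counting in Proposition~\ref{twin_no_primitives}: of the $p+1$ Hurwitz primes of norm $p$ (up to association), at most two are ``bad'' for a given $\beta$, so one can build $\alpha$ prime by prime. Your final paragraph sketches precisely this; the phrase ``each giving a different mod-$p$ kernel'' is slightly imprecise, but the ``abundance of choices'' heuristic is borne out by the explicit count. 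You should also note that Theorem~\ref{twin_quad_pow} guarantees no \emph{new} prime can divide $\alpha\beta\,\konj\alpha$ once $\beta$ is primitive, so the prime-by-prime construction cannot backfire---this is what makes the induction in Lemma~\ref{twin_yes_primitives} go through.
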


We give a complete list of twin-complete numbers modulo the
following conjecture.

\begin{conj}\label{Gauss-conj}
  The complete list of those square-free numbers that can be
  written as a sum of two squares, but not as a sum of three 
  positive squares is the following: 
  $\{1,  2, 5, 10, 13, 37, 58, 85, 130\}$.
\end{conj}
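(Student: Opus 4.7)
The plan is to translate the two conditions defining membership in the list into a single equation between representation numbers, interpret that equation via Gauss's three-squares theorem as a class-number condition, and then recognise this condition as Euler's classical criterion for an \emph{idoneal number}.

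First, inclusion--exclusion on the coordinates of $(x,y,z)\in\BZ^3$ satisfying $x^2+y^2+z^2=n$ shows that the number of such representations in which at least one coordinate vanishes equals $3r_2(n)-3r_1(n)$, where $r_k(n)$ is the number of ordered signed representations of $n$ as a sum of $k$ squares. So for positive $n$ the joint condition ``sum of at most two positive squares, but not of three'' is equivalent to
\[
r_3(n)=3r_2(n)-3r_1(n)>0.
\]
Squarefreeness together with being a sum of two squares forces every odd prime divisor of $n$ to be $\equiv 1\pmod 4$; writing $k$ for the number of such primes, the Jacobi-style formula gives $r_2(n)=4\cdot 2^k$. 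For squarefree $n>1$ one has $r_1(n)=0$, so the condition reduces to $r_3(n)=12\cdot 2^k$ (the case $n=1$ is checked directly).

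Next I would apply Gauss's classical formula $r_3(n)=12\,h(-4n)$, valid under our hypotheses ($n\equiv 1$ or $2\pmod 4$, $n>1$). The identity becomes
\[
h(-4n)=2^k.
\]
By genus theory the discriminant $-4n$ has exactly $k+1$ distinct prime divisors and therefore $2^k$ genera, so this identity says each genus contains a single class — equivalently, that the class group of discriminant $-4n$ has exponent at most $2$. Integers $n$ with this property are precisely Euler's \emph{numeri idonei}.

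The conjecture thus becomes the assertion that the squarefree idoneal numbers whose odd prime factors are all $\equiv 1\pmod 4$ form exactly the list $\{1,2,5,10,13,37,58,85,130\}$. The nine listed values, and the absence of others up to any explicit bound, can be verified by enumerating reduced binary quadratic forms of each candidate discriminant. The main obstacle is exactly what has kept Euler's list of $65$ idoneal numbers open for over two centuries: excluding further examples. Siegel's ineffective lower bound on $h(-4n)$ yields finiteness (Chowla), and Weinberger's theorem reduces the situation unconditionally to at most one additional idoneal number, with none surviving under suitable forms of GRH. Eliminating this final potential exception — which in our setting would be a huge hypothetical squarefree $n$ whose odd prime factors are all $\equiv 1\pmod 4$ — is the missing step that keeps the statement conjectural.
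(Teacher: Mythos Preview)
The statement is a \emph{conjecture}; the paper does not prove it, and neither do you. What both you and the paper actually accomplish is a reduction of Conjecture~\ref{Gauss-conj} to the classical open problem of Euler's \emph{numeri idonei}, together with the standard remarks (Chowla's finiteness, Weinberger's ``at most one more'', GRH conditionality) explaining why the last step is out of reach. In that sense your proposal is correct and honest about its own status.

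The route, however, is genuinely different from the paper's. The paper reaches the idoneal numbers via the Diophantine equation $xy+yz+zx=n$: it quotes Mordell's result that $n\in S$ forces this equation to have no solution with $0<x<y<z$, and then Cox's characterization of idoneal numbers by the very same non-representability condition, obtaining $S\subseteq\{\text{idoneal numbers}\}$; the reverse inclusion is handled by simply checking the $65$ known idoneal numbers against the defining properties of~$S$. Your argument instead equates the defining condition on $n$ with $r_3(n)=3r_2(n)-3r_1(n)$, feeds in the explicit value $r_2(n)=4\cdot 2^k$ and Gauss's formula $r_3(n)=12\,h(-4n)$ (valid here since squarefreeness plus $r_2(n)>0$ forces $n\equiv 1,2\pmod 4$), and reads off $h(-4n)=2^k=\#\{\text{genera}\}$, i.e.\ one class per genus. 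This is the class-number formulation of idoneality, so you get a clean two-sided equivalence in one stroke, whereas the paper's argument is one-directional plus a finite check. On the other hand, the paper's route is more self-contained within the elementary spirit of the article, citing only Mordell and Cox rather than invoking Gauss's three-squares class-number formula and genus theory. Both land in exactly the same place: the conjecture stands or falls with the completeness of Euler's list.
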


\begin{cor}\label{twin_compl_list}
  The numbers $m^2$, $2m^2$, $5m^2$, $10m^2$, $13m^2$,
  $37m^2$, $58m^2$, $85m^2$, $130m^2$ are twin-complete for
  every integer $m>0$. If Conjecture~\ref{Gauss-conj} holds,
  then there are no other twin-complete numbers.\qed
\end{cor}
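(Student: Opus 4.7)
The plan is to combine Theorem~\ref{twin_compl_full} with Conjecture~\ref{Gauss-conj}; the corollary is essentially a bookkeeping consequence, which is why the author closes it with \qed instead of giving a separate proof.

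First I would observe that if $n$ is squarefree and $m$ is a positive integer, then the squarefree part of $nm^2$ equals $n$, because $nm^2$ is already written as (squarefree)$\,\cdot\,$(square) and the decomposition into squarefree part and square is unique. Consequently, by the first sentence of Theorem~\ref{twin_compl_full}, $nm^2$ is twin-complete if and only if $n$ is twin-complete.

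Next, for each $n$ in the list $\{1,2,5,10,13,37,58,85,130\}$ I would check by a brief case analysis that $n$ can be written as a sum of at most two positive squares (e.g.\ $1=1^2$, $2=1^2+1^2$, $5=1^2+2^2$, $10=1^2+3^2$, $13=2^2+3^2$, $37=1^2+6^2$, $58=3^2+7^2$, $85=2^2+9^2$, $130=3^2+11^2$), but cannot be written as a sum of three positive squares (a finite check of all triples $(a,b,c)$ with $1\le a\le b\le c$ and $a^2+b^2+c^2=n$). Then the second sentence of Theorem~\ref{twin_compl_full} shows that every such $n$, and hence $nm^2$ for every $m>0$, is twin-complete; this proves the unconditional part.

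For the conditional converse, suppose Conjecture~\ref{Gauss-conj} holds and let $N$ be twin-complete. Write $N=nm^2$ where $n$ is the squarefree part. By Theorem~\ref{twin_compl_full}, $n$ is twin-complete as well, hence squarefree and representable as a sum of at most two positive squares but not as a sum of three positive squares. By the conjecture, $n\in\{1,2,5,10,13,37,58,85,130\}$, so $N$ appears in the claimed list. No step presents a genuine obstacle: the only labor is the elementary finite verification that each of the nine listed integers has the required sum-of-squares behavior.
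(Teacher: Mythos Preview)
Your argument is correct and is exactly the intended one: the paper marks the corollary with \qed\ precisely because it drops out of Theorem~\ref{twin_compl_full} together with the finite verification for the nine listed squarefree integers, just as you describe. The only cosmetic point is that in your converse paragraph, $n$ is squarefree by construction (as the squarefree part of $N$), not as a consequence of being twin-complete.
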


The proof of Theorem~\ref{twin_compl_full} is in
Section~\ref{sec_twin_compl}, where many known results
concerning this conjecture are reviewed. The square-free
numbers in question form a subset of Euler's \textit{numeri
  idonei}, and therefore, at most one number can be absent from
the list above. If such an integer does exist, it must
exceed $2\cdot 10^{11}$ \cite{Wei73}, and if it is even, the
Generalized Riemann Hypothesis is false \cite{BC00}.

\begin{prb}\label{main_prb}
  An avenue for future work is to investigate the
  \emph{construction, counting, extension} of higher-dimensional icubes.
\end{prb}

\section{Integral quaternions}
\label{sec_quat}

\noindent
In this section we list some basic results and technical
facts that we shall use in what follows. The general
references about quaternions are \cite{H19}, \cite{HW79},
and \cite{CS03}. The division ring of all quaternions (with
real coefficients) is denoted by~$\BH$. A quaternion is
\emph{pure} if its real part is zero.  Quaternions with
integer coefficients are called \emph{Lipschitz integral
  quaternions}. Such a quaternion is \emph{primitive} if its
coefficients are relatively prime.  Define the special
quaternion $\sigma=(1+i+j+k)/2$.

\begin{prp}[\cite{HW79}]\label{sigma_prop}
  We have $\Norm(\sigma)=1$ and
  $\sigma^2=\sigma-1$. Conjugating by $\sigma$ induces a
  cyclic permutation on $\{i,j,k\}$ (see
  Section~\ref{sec_matrix} for more details).
\end{prp}

Quaternions of the form $a\sigma+bi+cj+dk$ ($a,b,c,d\in\BZ$)
are called \emph{integral quaternions}, or \emph{Hurwitz
  integral quaternions} and they form a ring $\BE$.

\begin{prp}\label{integer_quat_elementary}
  A quaternion $\alpha=a+bi+cj+dk$ belongs to $\BE$ if and only if the
  numbers $2a,2b,2c,2d$ are rational integers with the same parity. If
  $\alpha$ is such, then $\Norm(\alpha)\in\BZ$. A pure integral
  quaternion has integer coefficients, hence the Euler matrix
  $E(\alpha)$, whose columns are $\alpha i\,\konj{\alpha}$, $\alpha
  j\,\konj{\alpha}$, $\alpha k\,\konj{\alpha}$, has integer entries.
\end{prp}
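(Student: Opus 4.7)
The plan is to handle the four assertions in the stated order, with the characterization of $\BE$ (the first assertion) doing most of the work.

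For the characterization I would use the identity $1 = 2\sigma - i - j - k$, which follows from $\tr(\sigma)=1$. It rewrites any $a+bi+cj+dk$ with $a,b,c,d \in \BZ$ as $2a\sigma + (b-a)i + (c-a)j + (d-a)k \in \BE$; and any quaternion whose four coefficients are all halves of odd integers differs from $\sigma$ by one with integer coefficients, so also lies in $\BE$. Conversely, expanding $a\sigma + bi + cj + dk$ gives real part $a/2$ and $i$-component $a/2 + b$ (and analogously for $j,k$); these four numbers are simultaneously integers when $a$ is even and simultaneously halves of odd integers when $a$ is odd. This completes part~(1).

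Parts (2) and (3) then drop out by direct inspection of the two cases. The trace is twice the real part, which is always an integer. For the norm, the integer case is immediate, and in the half-odd-integer case the square of each coefficient has fractional part $\tfrac14$, so the four squares sum to an integer. For (3), a pure Hurwitz quaternion has real part $0$, which rules out the half-odd-integer branch and forces integer coefficients.

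For the Euler-matrix claim I would avoid the ugly direct expansion (which, for half-odd-integer coefficients, a priori only displays entries as quarters of integers) and instead argue abstractly: since $\tr(\alpha) \in \BZ \subseteq \BE$ (the inclusion from part~(1)) and $\alpha \in \BE$, we have $\bar\alpha = \tr(\alpha) - \alpha \in \BE$, and hence $\alpha i \bar\alpha \in \BE$ because $\BE$ is a ring. By Theorem~\ref{euler_matrix}(1) this element is a pure quaternion, so part~(3) supplies integer coefficients, which are exactly the entries of the first column of $E(\alpha)$; the other two columns are identical. The only mild obstacle I anticipate is the bookkeeping in part~(1), but once the identity $1 = 2\sigma - i - j - k$ is in hand there is nothing really in the way.
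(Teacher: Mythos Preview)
The paper states this proposition without proof, treating it as an elementary observation; there is nothing in the paper to compare your argument against. Your proof is correct and well organized. Two cosmetic remarks: the fact that $\alpha\beta\,\konj{\alpha}$ is pure whenever $\beta$ is pure is stated in the preamble of Theorem~\ref{euler_matrix} rather than in item~(1), so you may want to adjust the citation; and in your last sentence ``the other two columns are identical'' should read ``the argument for the other two columns is identical.''
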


The Hurwitz integral quaternions form a maximal order in the
rational quaternion algebra
$\left(\frac{-1,-1}{\BQ}\right)$. We shall use the
symbol $\mid$ to denote divisibility \emph{on the left} 
in~$\BE$.

\begin{prp}\label{integer_quat_unit}
  An integral quaternion is a unit if and only if its norm
  is~$1$. These are exactly the $24$ elements $\pm 1$, $\pm
  i$, $\pm j$, $\pm k$, $(\pm 1 \pm i \pm j \pm k)/2$, which
  form a group under multiplication. Every integral
  quaternion has a left associate that has integer
  coefficients (\cite{HW79}, p.\ 305, \cite{CS03}, Section
  5.2).
\end{prp}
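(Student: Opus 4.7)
The plan is to handle the four assertions in sequence: the norm criterion, the explicit enumeration, the group structure, and finally the associate claim. For the norm criterion, if $\alpha\gamma = 1$ for some $\gamma \in \BE$, then Proposition~\ref{sigma_prop} gives $\Norm(\alpha)\Norm(\gamma) = 1$, and since Hurwitz norms are non-negative integers by Proposition~\ref{integer_quat_elementary}, we obtain $\Norm(\alpha) = 1$. Conversely, from $\Norm(\alpha) = 1$ the identity $\alpha\konj{\alpha}=1$ provides a right inverse $\konj{\alpha} \in \BE$ (since conjugation preserves the integer versus half-odd dichotomy of Proposition~\ref{integer_quat_elementary}), and then every $\beta \in \BE$ factors as $\alpha(\konj{\alpha}\beta)$, so $\alpha$ divides $\beta$ on the left.

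For the enumeration I would solve $a^2+b^2+c^2+d^2 = 1$ under the Hurwitz constraint. When $a,b,c,d \in \BZ$, exactly one coordinate is $\pm 1$ and the others vanish, yielding $\pm 1, \pm i, \pm j, \pm k$. When all four coefficients are half-odd, each square is at least $1/4$, forcing equality throughout and giving the sixteen elements $(\pm 1 \pm i \pm j \pm k)/2$. This yields $24$ units in total; closure under multiplication follows from multiplicativity of norm, inverses from $\alpha^{-1} = \konj{\alpha}$, and $1$ serves as identity, so these $24$ elements form a group.

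The final claim demands more thought. Given $\alpha \in \BE$, if $\alpha$ already has integer coefficients take $u=1$; otherwise write $\alpha = (a + bi + cj + dk)/2$ with $a,b,c,d$ all odd. My strategy is to search for $u$ among the sixteen half-odd units $(\epsilon_0 + \epsilon_1 i + \epsilon_2 j + \epsilon_3 k)/2$, $\epsilon_\ell \in \{\pm 1\}$. A direct expansion of the product gives the real part of $u\alpha$ as $(\epsilon_0 a - \epsilon_1 b - \epsilon_2 c - \epsilon_3 d)/4$. The key observation is that once the real part of $u\alpha$ lies in $\BZ$, Proposition~\ref{integer_quat_elementary} immediately forces all four coefficients of $u\alpha \in \BE$ into $\BZ$, since the all-half-odd alternative is excluded. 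It therefore suffices to choose signs so that $\epsilon_0 a - \epsilon_1 b - \epsilon_2 c - \epsilon_3 d \equiv 0 \pmod{4}$. Because each of $a,b,c,d$ is odd, hence $\equiv \pm 1 \pmod 4$, each of the four signed terms can be independently adjusted to be $\equiv 1 \pmod 4$, and their sum is then $\equiv 0 \pmod 4$, which furnishes the required $u$.

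The main obstacle is this last step, but it is tamed by the dichotomy of Proposition~\ref{integer_quat_elementary}, which collapses the four integrality conditions on $u\alpha$ into a single congruence modulo~$4$ on the real part; the freedom in the four sign choices makes the resulting congruence trivially solvable.
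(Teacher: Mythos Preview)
Your argument is correct. The paper itself does not supply a proof of this proposition; it simply cites Hardy--Wright for the associate claim. However, later in Section~\ref{sec_matrix} the paper does provide its own elementary argument for the dual (right-associate) version in Lemma~\ref{alpha_sigma_lem}: given a half-odd $\alpha$, one multiplies on the \emph{right} by $\sigma$ or $\sigma^{-1}$, writes out all four coefficients of $\alpha\sigma^{\pm 1}$ explicitly, and observes that the four numerators in each product are mutually congruent modulo~$4$, while one of the two real numerators $m-n-p-q$ and $m+n+p+q$ must be divisible by~$4$.

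Your route differs in two respects. First, you work directly with left associates $u\alpha$ as the statement requires, searching over all sixteen half-odd units rather than the two specific units $\sigma^{\pm 1}$. Second, and more interestingly, you invoke the dichotomy of Proposition~\ref{integer_quat_elementary} to collapse the four integrality constraints on $u\alpha$ into the single congruence on the real part; the paper's lemma instead verifies the mutual congruence of all four components by hand. Your reduction is cleaner and explains \emph{why} one congruence suffices, at the cost of having more units in play; the paper's version is more computational but pins down exactly which unit works.
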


\begin{thm}\label{quat_eucl}
  The ring $\BE$ is right Euclidean: for every $\alpha,
  \beta\in\BE$ with $\beta\ne 0$, there exist
  $\omega,\rho\in\BE$ such that $\alpha=\beta\omega+\rho$
  and $\Norm(\rho)<\Norm(\beta)$ (see Theorem 373 of
  \cite{HW79}).
\end{thm}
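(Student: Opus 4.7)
The plan is to reduce the theorem to a single approximation statement: for every $\gamma\in\BH$, there exists $\omega\in\BE$ with $\Norm(\gamma-\omega)<1$. Granting this, given $\alpha,\beta\in\BE$ with $\beta\ne 0$, we set $\gamma=\konj{\beta}\alpha/\Norm(\beta)=\beta^{-1}\alpha\in\BH$, pick $\omega\in\BE$ with $\Norm(\gamma-\omega)<1$, and take $\rho=\alpha-\beta\omega$. Then $\rho\in\BE$ and, using multiplicativity of the norm (Proposition~\ref{sigma_prop}),
\[
\Norm(\rho)=\Norm\bigl(\beta(\gamma-\omega)\bigr)=\Norm(\beta)\,\Norm(\gamma-\omega)<\Norm(\beta),
\]
which is exactly the required division with remainder. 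Note that doing this on the right is the reason the ring is \emph{right} Euclidean; one needs to be careful about the side of multiplication since $\BE$ is non-commutative.

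To prove the approximation statement, write $\gamma=a+bi+cj+dk$ with $a,b,c,d\in\BR$. First try rounding each coordinate to a nearest integer, producing a Lipschitz integer $\omega_0=a_0+b_0 i+c_0 j+d_0 k\in\BE$ with $|a-a_0|,|b-b_0|,|c-c_0|,|d-d_0|\le 1/2$. Then
\[
\Norm(\gamma-\omega_0)\le 4\cdot\tfrac14=1,
\]
and if the inequality is strict we are done. Equality forces each of the four coordinate differences to equal $\pm 1/2$, which means that $a,b,c,d$ are all halves of odd integers. By Proposition~\ref{integer_quat_elementary}, $\gamma$ itself is then already a Hurwitz integer, so we simply take $\omega=\gamma$, giving $\rho=0$.

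The only real subtlety — and this is the one place where the Hurwitz ring behaves strictly better than the Lipschitz ring — is the boundary case where the Lipschitz rounding only yields $\Norm(\gamma-\omega_0)=1$, not a strict inequality. The observation that plugs this gap is precisely that $\BE$ contains the half-integer quaternions generated by $\sigma=(1+i+j+k)/2$; in lattice language, these half-integer points sit at the centers of the unit cubes of $\BZ^4$, so every point of $\BH$ lies strictly inside the closed unit ball around some element of $\BE$. Once this geometric fact is secured, the Euclidean property is immediate, and no deeper number theory is needed.
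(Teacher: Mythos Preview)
The paper does not give its own proof of this theorem; it merely cites Theorem~373 of \cite{HW79}. Your argument is correct and is essentially the standard one found there: reduce to the approximation statement that every $\gamma\in\BH$ lies at norm-distance strictly less than~$1$ from some $\omega\in\BE$, round coordinatewise to a Lipschitz integer, and handle the boundary case $\Norm(\gamma-\omega_0)=1$ by observing that equality forces all four coordinates of $\gamma$ to be half-odd-integers, whence $\gamma\in\BE$ already and one may take $\omega=\gamma$, $\rho=0$.
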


Since $\alpha\mapsto\konj{\alpha}$ is an isomorphism between
$\BE$ and its dual, every assertion that we prove for
$\BE$ remains true if we replace ``left'' with ``right'' and
vice versa. As $\BE$ is left Euclidean, every element can be
written as a product of irreducible quaternions. This
decomposition is unique in a certain sense (see \cite{CS03},
Section 5.2).

We shall need the following two technical
lemmas.

\begin{lm}\label{quat_lnko}
  Suppose that $\alpha\in\BE$ and $p\in\BZ$ is a prime such
  that $p\mid\Norm(\alpha)$ but $p$~does not divide
  $\alpha$. Then $\alpha$ can be written as $\pi\alpha'$,
  where $\Norm(\pi)=p$. This $\pi$ is uniquely
  determined up to right association.

  An element $\pi$ is a left divisor of $\alpha$ with norm
  $p$ if and only if $\pi$ is the generator of the right
  ideal $(\alpha, p)_r$.
\end{lm}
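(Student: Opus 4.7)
The plan is to exploit the right Euclidean property of $\BE$ from Theorem~\ref{quat_eucl} to show that every right ideal of $\BE$ is principal, and then to identify the generator of $(\alpha,p)_r=\alpha\BE+p\BE$ with the desired $\pi$. Given any two generators $\gamma,\delta$ of a right ideal, one runs the standard Euclidean algorithm: write $\gamma=\delta\omega+\rho$ with $\Norm(\rho)<\Norm(\delta)$; since $\delta\omega\in\delta\BE$, the remainder $\rho$ lies in the ideal, and $\gamma\BE+\delta\BE=\delta\BE+\rho\BE$. Iterating with the strictly decreasing sequence of norms reduces to a single generator $\pi$ with $\pi\BE=(\alpha,p)_r$.

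The substance of the proof is to pin down $\Norm(\pi)$. From $p\in\pi\BE$ write $p=\pi\gamma$; then $p^2=\Norm(\pi)\Norm(\gamma)$ forces $\Norm(\pi)\in\{1,p,p^2\}$. I would rule out the extremes as follows. If $\Norm(\pi)=1$, then $\pi$ is a unit, so $(\alpha,p)_r=\BE$ and there exist $x,y\in\BE$ with $1=\alpha x+py$. Multiplying on the left by $\konj\alpha$ yields
\[
\konj\alpha=\Norm(\alpha)\,x+p\,\konj\alpha\,y,
\]
and since $p\mid\Norm(\alpha)$ by hypothesis, the right-hand side lies in $p\BE$; hence $p\mid\konj\alpha$, and conjugating (using that $p$ is central and $\alpha\mapsto\konj\alpha$ is an anti-automorphism) gives $p\mid\alpha$, contradicting the hypothesis. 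If $\Norm(\pi)=p^2$, then $\Norm(\gamma)=1$, so $\gamma$ is a unit and $\pi\BE=p\gamma^{-1}\BE=p\BE$, forcing $\alpha\in p\BE$ and again contradicting $p\nmid\alpha$. Hence $\Norm(\pi)=p$. Uniqueness up to right association is automatic, since any two generators of the same principal right ideal differ by a right unit factor.

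For the ``if and only if'' characterization of left divisors of $\alpha$ with norm $p$: given any such $\pi$, both $\alpha\in\pi\BE$ and $p=\pi\konj\pi\in\pi\BE$, so $(\alpha,p)_r\subseteq\pi\BE$. Taking the generator $\pi_0$ produced in the first step, write $\pi_0=\pi\delta$; the norm equation $p=p\cdot\Norm(\delta)$ forces $\delta$ to be a unit, so $\pi\BE=\pi_0\BE=(\alpha,p)_r$. The main obstacle is the exclusion of $\Norm(\pi)=1$: this is the only place where both hypotheses enter simultaneously, and the decisive trick is the manipulation $\konj\alpha=\konj\alpha(\alpha x+py)=\Norm(\alpha)x+p\konj\alpha y$, which converts a relation inside the right ideal into a divisibility statement about $\konj\alpha$ that the hypothesis $p\nmid\alpha$ forbids.
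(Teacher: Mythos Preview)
Your proof is correct and follows essentially the same approach as the paper: both take a generator $\pi$ of the principal right ideal $(\alpha,p)_r$, analyze $\Norm(\pi)\in\{1,p,p^2\}$ via $p=\pi\gamma$, rule out the extremes using exactly the manipulation $\konj\alpha=\Norm(\alpha)x+p\,\konj\alpha\,y$ (for $\Norm(\pi)=1$) and $\pi\BE=p\BE$ (for $\Norm(\pi)=p^2$), and handle the converse by the inclusion $(\alpha,p)_r\subseteq\pi\BE$ plus a norm comparison. The only cosmetic difference is that you spell out the Euclidean algorithm reduction explicitly, whereas the paper simply invokes principality.
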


\begin{proof}
  The fact that $\alpha$ is left divisible by a prime $\pi$
  of norm $p$, with the additional property
  $(\pi)_r=(\alpha,p)_r$, follows easily from Theorem~2 in
  Section~5.2 of~\cite{CS03}. (Note that that argument only
  uses the hypothesis that $p$ does not divide $\alpha$, not
  the primitivity of $\alpha$.)  Suppose that $\pi_1$ is
  also left divisor of $\alpha$ with norm $p$. Then $\alpha$
  and $p=\pi_1\konj{\pi_1}$ are in~$(\pi_1)_r$, so
  $(\pi)_r=(\alpha,p)_r\subseteq (\pi_1)_r$.  Thus, $\pi_1$
  divides $\pi$ on the right, and as they have the same
  norm, they are right associates, implying also that
  $(\pi_1)_r=(\pi)_r=(\alpha,p)_r$.
\end{proof}

\begin{lm}\label{twin_primeprop}
  Suppose that $\theta,\eta,\pi\in\BE$, such that
  $\Norm(\pi)=p$ is a prime in~$\BZ$. If $\pi\mid\theta$,
  $p\mid\konj{\theta}\eta$ but $p$ does not divide $\theta$,
  then $\pi\mid\eta$.
\end{lm}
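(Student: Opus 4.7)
The plan is to use the fact that $\BE$ is right Euclidean (Theorem~\ref{quat_eucl}) to analyze the right ideal $I=(\eta,\pi)_r = \eta\BE+\pi\BE$. Write $I = \delta\BE$. Then $\delta$ is a left divisor of $\pi$; writing $\pi=\delta\mu$ and taking norms, $\Norm(\delta)\Norm(\mu)=p$, so either $\delta$ is a unit or $\mu$ is a unit. In the latter case $\pi$ and $\delta$ are right associates, so $\pi\mid\delta\mid\eta$ and we are done.

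So the interesting case is when $\delta$ is a unit, i.e.\ $1\in I$. Then there exist $a,b\in\BE$ with
\[
\eta a+\pi b=1.
\]
Left-multiplying by $\konj{\theta}$ yields
\[
\konj{\theta}\eta\cdot a+\konj{\theta}\pi\cdot b=\konj{\theta}.
\]
The first summand is divisible on the left by $p$ by hypothesis. For the second summand, use $\pi\mid\theta$: writing $\theta=\pi\theta'$ gives $\konj{\theta}=\konj{\theta'}\konj{\pi}$, and hence
\[
\konj{\theta}\pi=\konj{\theta'}\konj{\pi}\pi=\konj{\theta'}\Norm(\pi)=p\konj{\theta'},
\]
so $p\mid\konj{\theta}\pi b$ as well. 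Therefore $p\mid\konj{\theta}$. Since $p\in\BZ$ is central in~$\BE$, conjugating yields $p\mid\theta$, contradicting the hypothesis. This rules out the ``unit'' case and completes the argument.

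The main obstacle is just keeping the sidedness of divisibility straight: the crucial move is recognizing that $\pi\mid\theta$ together with $\konj{\pi}\pi=p$ forces $\konj{\theta}\pi$ to be a left multiple of~$p$, which is what lets the Bezout-style identity $\eta a+\pi b=1$ be collapsed modulo~$p$ after multiplying by $\konj{\theta}$. Everything else (existence of $\delta$, classifying it via $\Norm(\pi)=p$, centrality of rational integers) is a direct appeal to previously stated facts.
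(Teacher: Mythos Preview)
Your argument is correct. It differs from the paper's proof in which right ideal is analyzed. The paper invokes Lemma~\ref{quat_lnko} to obtain $(\theta,p)_r=(\pi)_r$, writes $\pi=\theta\tau_1+p\tau_2$, conjugates, and right-multiplies by~$\eta$ to get $\konj{\pi}\eta=\konj{\tau_1}(\konj{\theta}\eta)+p\konj{\tau_2}\eta$; both summands are divisible by $p=\konj{\pi}\pi$, and cancelling $\konj{\pi}$ yields $\pi\mid\eta$ directly. You instead examine $(\eta,\pi)_r$ and argue by contradiction, left-multiplying a Bezout identity by~$\konj{\theta}$. The two routes are near mirror images: the paper's is a line shorter and feeds off the previous lemma, whereas yours is self-contained, needing only the Euclidean property rather than the full conclusion of Lemma~\ref{quat_lnko}. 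Both pivot on the same identity $\konj{\pi}\pi=p$ (equivalently $\konj{\theta}\pi=p\konj{\theta'}$).
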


\begin{proof}
  By Lemma~\ref{quat_lnko}, $(p,\theta)_r=(\pi)_r$, that
  is, $\pi = \theta\tau_1+p\tau_2$, for some
  $\tau_1,\tau_2\in\BE$. Hence, $\konj{\pi}\eta =
  \konj{\tau_1}\konj{\theta}\eta+p\konj{\tau_2}\eta$, and as
  $p$ divides $\konj{\theta}\eta$, we get that $p\mid
  \konj{\pi}\eta$. Using $p=\konj{\pi}\pi$, this shows that
  $\pi\mid\eta$.
\end{proof}

\begin{thm}\label{quat_irred}
  An integral quaternion is irreducible in the ring $\BE$ if
  and only if its norm is a prime in $\BZ$ (see Theorem 377
  of \cite{HW79}). The only elements of $\BE$ whose norm is
  $2$ are $\lambda=1+i$ and its left associates. If $p>2$ is
  a prime in $\BZ$, then there exist exactly $24(p+1)$
  integral quaternions whose norm is $p$ (see the note right
  after the proof of Theorem~3 in Section~5.3
  of~\cite{CS03}).
\end{thm}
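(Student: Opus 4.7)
The plan is to treat the three assertions separately. For the first statement, the forward direction is immediate from norm multiplicativity (Proposition~\ref{sigma_prop}): if $\Norm(\alpha)=p$ is prime and $\alpha=\beta\gamma$, then $\Norm(\beta)\Norm(\gamma)=p$ forces one factor to have norm $1$, hence to be a unit by Proposition~\ref{integer_quat_unit}. For the converse I would argue that a composite norm implies reducibility: let $\Norm(\alpha)=n$ with prime divisor $p$. If $p\nmid\alpha$ in $\BE$, Lemma~\ref{quat_lnko} produces $\alpha=\pi\alpha'$ with $\Norm(\pi)=p$ and $\Norm(\alpha')=n/p$; when $n>p$, neither factor is a unit. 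If $p\mid\alpha$, write $\alpha=p\beta$: if $n>p^2$ both $p$ and $\beta$ are non-units, while if $n=p^2$ then $\alpha$ is an associate of $p$, and Lagrange's four-square theorem provides some $\gamma\in\BE$ with $\Norm(\gamma)=p$, so $p=\gamma\konj{\gamma}$ is a non-trivial factorization.

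For the second assertion, consider $\alpha=a+bi+cj+dk\in\BE$ with $\Norm(\alpha)=2$. By Proposition~\ref{integer_quat_elementary}, either all of $a,b,c,d$ are integers, or all are halves of odd integers. The half-odd case would require $(2a)^2+(2b)^2+(2c)^2+(2d)^2=8$ as a sum of four odd squares; but each odd square is $1$ or at least $9$, so the sum is $4$ or exceeds $8$. Hence $a,b,c,d\in\BZ$ with $a^2+b^2+c^2+d^2=2$, forcing exactly two coordinates to be $\pm1$ and the rest to vanish. This gives $\binom{4}{2}\cdot 2^2=24$ quaternions. On the other hand, the $24$ units act freely on $\lambda=1+i$ by left multiplication (since $u\lambda=v\lambda$ in $\BH$ implies $u=v$), producing $24$ distinct left associates, all of norm $2$. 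The two sets of size $24$ must therefore coincide.

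For the third statement, I plan to use Jacobi's four-square formula (Theorem~\ref{Jacobi}), which will be the natural counting tool. For an odd prime $p$, $r_4(p)=8(p+1)$ counts the Lipschitz integral quaternions of norm $p$, while the half-odd Hurwitz quaternions of norm $p$ correspond, by doubling coefficients, to representations of $4p$ as a sum of four \emph{odd} squares. A parity check modulo~$4$ shows that in any representation $4p=w^2+x^2+y^2+z^2$ the number of odd summands is divisible by $4$, so either all four are even or all four are odd. The all-even representations are in bijection with the $r_4(p)=8(p+1)$ representations of $p$, while $r_4(4p)=8(1+2+p+2p)=24(p+1)$ counts the total, leaving $16(p+1)$ all-odd ones. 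Summing, $8(p+1)+16(p+1)=24(p+1)$ Hurwitz quaternions of norm $p$.

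The main obstacle is the third part. The Lipschitz count is immediate from Jacobi, but passing to Hurwitz integers requires knowing that the ``new'' half-odd contributions are exactly $16(p+1)$; the parity dichotomy modulo $4$ is the key trick that makes the bookkeeping match. If one preferred a more structural route, Lemma~\ref{quat_lnko} combined with a transitive left-associate count would give another derivation, but the Jacobi approach is the shortest.
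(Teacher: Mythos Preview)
Your argument is correct in all three parts. Note, however, that the paper does not supply its own proof of this theorem: the first assertion is referred to Theorem~377 of \cite{HW79}, and the norm-$2$ classification and the $24(p+1)$ count are simply stated without argument (the latter being the prime case of Corollary~\ref{quat_int_count}, which is likewise stated without proof). So there is no in-paper proof to compare your approach against.

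One caveat on logical order: you derive the $24(p+1)$ count from Jacobi's formula (Theorem~\ref{Jacobi}), which in the paper appears \emph{after} Theorem~\ref{quat_irred}, and which in Hardy--Wright is itself proved \emph{via} the arithmetic of~$\BE$---in particular via the factorization theory to which Theorem~\ref{quat_irred} belongs. This is not a genuine gap, since Jacobi's theorem has quaternion-free proofs (theta series, elliptic functions), but you should make explicit which proof of Jacobi you are invoking so the argument is visibly non-circular. Alternatively, the structural route you mention parenthetically---counting left-associate classes directly using Lemma~\ref{quat_lnko} and the structure of $\BE/p\BE$---would sidestep the issue entirely and is closer to how the count is normally obtained within the quaternion framework.
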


\begin{cor}\label{quat_int_count}
  The number of integral quaternions with norm~$n$ is $24$
  times the sum of positive odd divisors of~$n$.
\end{cor}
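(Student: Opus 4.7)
The plan is to show $f(n) := |\{\alpha\in\BE : \Norm(\alpha)=n\}| = 24\sigma_{\mathrm{odd}}(n)$, where $\sigma_{\mathrm{odd}}(n)$ denotes the sum of the odd positive divisors of $n$, by establishing a linear recursion in the exponents of the prime factorization of $n$. The base case $f(1)=24$ is the unit count of Proposition~\ref{integer_quat_unit}, and Theorem~\ref{quat_irred} already supplies $f(p)=24(p+1)$ for odd primes. Write $I(k):=\{\alpha\in\BE:\Norm(\alpha)=k\}$.

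First I would dispose of the prime $2$ by showing $f(2n)=f(n)$ for every $n\ge 1$. Left multiplication by $\lambda:=1+i$ is an injection $I(n)\hookrightarrow I(2n)$ because $\BE$ is a domain. For surjectivity, let $\beta\in I(2n)$. If $2\mid\beta$, then $\beta=\lambda\konj\lambda(\beta/2)\in\lambda\BE$; otherwise Lemma~\ref{quat_lnko} supplies a left divisor $\pi$ of $\beta$ with $\Norm(\pi)=2$. Theorem~\ref{quat_irred} identifies $I(2)$ with the $24$ left associates of $\lambda$, and the $24$ right associates $\{\lambda u : u\text{ a unit}\}$ are also distinct elements of $I(2)$, so they exhaust $I(2)$ as well; thus $\pi=\lambda u$ and $\beta=\lambda(u\beta')\in\lambda\BE$.

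Next, for each odd prime $p$ and each $m\ge 1$, I would establish
\[
f(pm)=(p+1)f(m)-p\,f(m/p), \qquad f(m/p):=0 \text{ if } p\nmid m,
\]
by double-counting the pairs $(\pi,\alpha')\in I(p)\times I(m)$ via the product map $(\pi,\alpha')\mapsto\pi\alpha'\in I(pm)$. For $\beta\in I(pm)$ with $p\nmid\beta$, the uniqueness clause of Lemma~\ref{quat_lnko} forces $\pi$ to be determined up to right association once $\beta$ is fixed, producing exactly $24$ preimages; for $\beta=p\gamma$ with $\gamma\in I(m/p)$, the identity $\beta=\pi(\konj\pi\gamma)$ makes every one of the $24(p+1)$ elements of $I(p)$ a valid left factor. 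Equating $|I(p)\times I(m)|=24(p+1)f(m)$ with the total $24\bigl(f(pm)-f(m/p)\bigr)+24(p+1)f(m/p)$ yields the recursion, whose characteristic polynomial factors as $(x-1)(x-p)$. Hence $f(p^{k}n)=(1+p+\cdots+p^{k})f(n)$ for $p\nmid n$, and iterating over the odd prime divisors of $n$, combined with the first step to strip the $2$-part, yields $f(n)=24\sigma_{\mathrm{odd}}(n)$.

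The main obstacle is the bookkeeping in the double count: the $24$-fold ambiguity of $\pi$ via right associates in the case $p\nmid\beta$ must be balanced against the $24(p+1)$-fold freedom when $p\mid\beta$ (where every element of $I(p)$ is a left divisor of $\beta$), and these two multiplicities have to reconcile to produce the clean linear recurrence above.
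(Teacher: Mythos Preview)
Your argument is correct. The bijection $\alpha\mapsto\lambda\alpha$ from $I(n)$ to $I(2n)$ is sound (the observation that the $24$ right associates of $\lambda$ already fill up the $24$-element set $I(2)$ is the clean way to get $\lambda\mid\beta$ from Lemma~\ref{quat_lnko}), and the double count for odd $p$ is airtight: over $\beta\in I(pm)$ with $p\nmid\beta$ the fibre has exactly $24$ elements by the uniqueness clause of Lemma~\ref{quat_lnko}, while over $\beta=p\gamma$ every $\pi\in I(p)$ works via $\beta=\pi(\konj{\pi}\gamma)$. The resulting Hecke-type recursion $f(pm)=(p+1)f(m)-pf(m/p)$ solves to $f(p^kn)=\sigma(p^k)f(n)$ when $p\nmid n$, exactly as you say.

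As for comparison: the paper does not actually supply a proof of this corollary. It records the statement between Theorem~\ref{quat_irred} (which furnishes the prime case $f(p)=24(p+1)$ and the special structure of $I(2)$) and Jacobi's Theorem~\ref{Jacobi}, treating it as a standard consequence of the factorization machinery just assembled. Your proof is precisely the argument that machinery is pointing toward, and indeed the same fibre-counting idea reappears in the paper's proof of Lemma~\ref{quat_right_count}. So you have not taken a different route; you have written out in full the induction the paper leaves to the reader.
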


\begin{lm}\label{quat_right_count}
  Let $p\in\BZ$ be a prime and $\ell\ge 0$. Suppose that
  $\pi_1\in\BE$ is fixed and has norm~$p$. Consider all
  integer quaternions $\alpha$ such that
  $\Norm(\alpha)=p^\ell$ and $\alpha\pi_1$ is not
  divisible by~$p$. Then the number of such $\alpha$ is
  $24p^\ell$.
\end{lm}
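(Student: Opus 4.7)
The plan is to identify divisibility of $\alpha\pi_1$ by $p$ (on the left) with right-divisibility of $\alpha$ by $\konj{\pi_1}$, and then read off the count from Corollary~\ref{quat_int_count}.

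First I would prove the equivalence
\[
p\mid\alpha\pi_1 \quad\Longleftrightarrow\quad \alpha=\beta\konj{\pi_1}\text{ for some }\beta\in\BE.
\]
The direction ``$\Leftarrow$'' is immediate from $\konj{\pi_1}\pi_1=\Norm(\pi_1)=p$: indeed $\alpha\pi_1=\beta\konj{\pi_1}\pi_1=p\beta$. For ``$\Rightarrow$'', assume $\alpha\pi_1=p\gamma$ and multiply on the right by $\konj{\pi_1}$; using $\pi_1\konj{\pi_1}=p$ and the fact that $p$ is a nonzero central element of the division ring $\BH$, one concludes $\alpha=\gamma\konj{\pi_1}$. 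This is essentially the observation already used in Lemma~\ref{twin_primeprop}.

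Next, because $\BH$ is a division ring, right multiplication by $\konj{\pi_1}$ is injective on $\BE$, and by multiplicativity of the norm it restricts to a bijection
\[
\{\beta\in\BE:\Norm(\beta)=p^{\ell-1}\}\ \longleftrightarrow\ \{\alpha\in\BE:\Norm(\alpha)=p^\ell,\ p\mid\alpha\pi_1\}.
\]
(When $\ell=0$, the latter set is empty since $\Norm(\alpha\pi_1)=p<p^2=\Norm(p)$, and the formula reads $24=24\cdot p^0$.) Hence the number we want equals the count of $\BE$-quaternions of norm $p^\ell$ minus the count of norm $p^{\ell-1}$. Applying Corollary~\ref{quat_int_count}, for $p$ odd these are $24(1+p+\cdots+p^\ell)$ and $24(1+p+\cdots+p^{\ell-1})$, and the difference telescopes to $24p^\ell$.

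The only real point of care is the non-commutativity of $\BE$: one must keep track of which side $\pi_1$ and $\konj{\pi_1}$ appear on, and verify that $\beta\mapsto\beta\konj{\pi_1}$ indeed provides a bijection onto the subset under consideration. Once this reduction is in place, the count is essentially immediate from Corollary~\ref{quat_int_count}.
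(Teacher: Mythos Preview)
Your argument is correct and takes a genuinely different route from the paper. The paper proceeds by induction on~$\ell$: it writes $\alpha=\alpha_2\pi_2$ with $\Norm(\pi_2)=p$ (via the dual of Lemma~\ref{quat_lnko}), uses Lemma~\ref{twin_primeprop} to show that $p\mid\alpha\pi_1$ iff $\pi_2$ is a left associate of~$\konj{\pi_1}$, counts $p$ good choices for $\pi_2$ out of the $p+1$ left-association classes of norm~$p$ (Theorem~\ref{quat_irred}), and multiplies by the inductive count $24p^{\ell-1}$ for~$\alpha_2$. Your approach sidesteps the induction entirely: the equivalence $p\mid\alpha\pi_1\Leftrightarrow\alpha\in\BE\,\konj{\pi_1}$ together with injectivity of right multiplication gives a bijection between the ``bad'' $\alpha$'s and the integral quaternions of norm~$p^{\ell-1}$, so the answer is a simple difference of two values already supplied by Corollary~\ref{quat_int_count}. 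This is shorter and more transparent; the paper's method has the virtue of not relying on Corollary~\ref{quat_int_count} (it essentially re-derives the relevant piece of it), but since that corollary precedes the lemma, your use of it is entirely legitimate.

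One small remark: you explicitly restrict the final subtraction to odd~$p$, which is appropriate. For $p=2$ Corollary~\ref{quat_int_count} gives $24$ elements of norm~$2^\ell$ for every~$\ell$, and your difference would be~$0$; in fact the lemma as stated is false for $p=2$, $\ell\ge 1$ (every $\alpha$ of norm~$2$ is a right associate of~$\konj{\pi_1}$, so the good set is empty rather than of size~$48$). The paper's proof has the same tacit restriction, since the count $p+1$ from Theorem~\ref{quat_irred} is stated only for $p>2$, and the lemma is only invoked for odd primes in Proposition~\ref{twin_no_primitives}.
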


\begin{proof}
  We do induction on~$\ell$. If $\ell=0$, then the statement
  is trivial, since the number of units is $24$. Suppose
  that $p$ does not divide~$\alpha$.  The dual of
  Lemma~\ref{quat_lnko} shows that $\alpha$ can be
  written as $\alpha_2\pi_2$, with $\Norm(\pi_2)=p$, where
  $\alpha_2$ is unique up to right association, and $\pi_2$
  is unique up to left association.  Apply
  Lemma~\ref{twin_primeprop} for $\theta \mapsto
  \konj{\alpha}$, $\eta\mapsto \pi_1$, and
  $\pi\mapsto\konj{\pi_2}$. We get that if $\alpha\pi_1$ is
  divisible by~$p$, then $\pi_2$ and $\konj{\pi_1}$ are left
  associates.  Conversely, if $\pi_2$ and $\konj{\pi_1}$ are
  left associates, then clearly $p\mid \alpha\pi_1$.

  By Theorem~\ref{quat_irred}, the number of elements of
  norm~$p$ up to left association is $p+1$. So $\pi_2$ can
  be chosen $p$ ways, and by the induction assumption,
  $\alpha_2$ can be chosen $24p^{\ell-1}$ ways for every
  given $\pi_2$. Thus, $\alpha$ can be chosen $24p^{\ell-1}p$
  ways.
\end{proof}

\section{Integral Euler matrices}
\label{sec_matrix}

\noindent
Our goal in this section is to characterize (in
Theorem~\ref{e_matrix_prop}) all Euler matrices $E(\alpha)$
with integer entries (called \emph{integral Euler matrices})
in terms of the corresponding quaternion~$\alpha$. S\'ark\"ozy's
Theorem~\ref{sarkozy} is obtained as Corollary~\ref{sarkozy_cor} 

First, we demonstrate how to permute the columns of an Euler
matrix by changing its generating quaternion. By
Theorem~\ref{euler_matrix}, $E(\alpha)$ is the matrix of
$R(\alpha):\beta\mapsto\alpha\beta\,\konj{\alpha}$, hence
$E(\alpha\vep) = E(\alpha)E(\vep)$. The map $R(\alpha)$ is
always orientation-preserving, but the map corresponding to
an icube (as a matrix) may not be. This problem is
averted by taking the negative of an odd number of columns.

\begin{prp}[cf.\ \cite{CS03}, Section 3.5]\label{matrix_permute}
  Let $\vep$ be
\begin{enumerate}
\item[$(A)$] $\sigma$ or $\sigma^{-1}$, where
  $\sigma=(1+i+j+k)/2$. Then $R(\vep)$ is the rotation of
  $\BR^3$ about the vector $i+j+k$ by an angle of $\pm
  120^{\circ}$ (thus cyclically permuting the three
  coordinate axes). Therefore, $E(\alpha\vep)$ is obtained
  from $E(\alpha)$ by applying a cyclic permutation to the
  columns.
\item[$(B)$] $(1\pm i)/\sqrt{2}$. Then $R(\vep)$ is the
  rotation about the unit vector $i \in \BZ^3$ by an
  angle of $\pm 90^{\circ}$ (interchanging the other
  two coordinate axes). Therefore, $E(\alpha\vep)$ is
  obtained from $E(\alpha)$ by switching the last two
  columns and taking the negative of one. A similar
  statement holds for $(1\pm j)/\sqrt{2}$ and $(1\pm
  k)/\sqrt{2}$.
\item[$(C)$] $\pm i$. Then $R(\vep)$ is the half turn (that
  is, $180^{\circ}$ rotation) about the unit vector $i\in
  \BZ^3$ (fixing all coordinate axes). Therefore,
  $E(\alpha\vep)$ is obtained from $E(\alpha)$ by taking the
  negative of the last two columns. A similar statement
  holds for $\pm j$ and~$\pm k$. This transformation is the
  square of the one described in~$(B)$.
\end{enumerate}
Every non-identical permutation of the columns of
$E(\alpha)$ can be obtained by one of the above
modifications of~$\alpha$, but in case of an odd
permutation one of the columns changes its sign. One can
also change the sign of any two columns.\qed
\end{prp}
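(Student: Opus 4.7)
The plan is to reduce everything to the multiplicativity relation $E(\alpha\vep)=E(\alpha)E(\vep)$, which was already recorded just before the proposition (it comes from $R(\alpha\vep)=R(\alpha)\compose R(\vep)$ in Theorem~\ref{euler_matrix}). Consequently, right-multiplying $E(\alpha)$ by $E(\vep)$ acts on the columns exactly as $R(\vep)$ acts on the ordered basis $(i,j,k)$. So the task in each of (A), (B), (C) is simply to identify the orthogonal map $R(\vep)$ for the given unit quaternion $\vep$, and this in turn is a short computation of the three images $\vep\, i\,\konj{\vep}$, $\vep\, j\,\konj{\vep}$, $\vep\, k\,\konj{\vep}$.

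For (A), with $\vep=\sigma=(1+i+j+k)/2$ one checks by direct multiplication that $\sigma i\,\konj{\sigma}=j$, $\sigma j\,\konj{\sigma}=k$, $\sigma k\,\konj{\sigma}=i$; hence $R(\sigma)$ fixes $i+j+k$ and induces the 3-cycle $(i\,j\,k)$, which must therefore be the $+120^\circ$ rotation about $i+j+k$. The case $\vep=\sigma^{-1}$ is the inverse. For (B), with $\vep=(1+i)/\sqrt{2}$ the same direct calculation gives $i\mapsto i$, $j\mapsto k$, $k\mapsto -j$, identifying $R(\vep)$ as the $+90^\circ$ rotation about $i$; switching the sign of $i$ in $\vep$ reverses the direction of rotation. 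For (C), with $\vep=\pm i$ one has $\vep i\,\konj{\vep}=i$ and $\vep j\,\konj{\vep}=-j$, $\vep k\,\konj{\vep}=-k$, so $R(\vep)$ is the half-turn about $i$; this is clearly the square of the map in~(B), as claimed. The statements for $j$ and $k$ in (B) and (C) follow by applying the cyclic permutation from~(A).

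For the final assertion, note that $R(\vep)\in\SO(3)$ for every unit $\vep$, so every $E(\vep)$ has determinant $+1$, which forces any column permutation obtained this way to be even when no sign is changed; equivalently, any odd column permutation achieved via some $\vep$ must be accompanied by a single sign flip (as seen in~(B)). Since the cyclic permutations from~(A) together with a single transposition-with-sign from~(B) generate all of $S_3$ (up to the parity-dictated sign flip), every non-identical permutation is realized. Finally, the three maps $R(\pm i)$, $R(\pm j)$, $R(\pm k)$ negate, respectively, the pairs of columns $\{2,3\}$, $\{1,3\}$, $\{1,2\}$, which are all the ways to negate exactly two columns.

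The argument has no genuine obstacle; the only point requiring a touch of care is bookkeeping the signs in (B), and recognising that the parity constraint on column permutations is automatic from $\det E(\vep)=1$ rather than something one has to impose by hand.
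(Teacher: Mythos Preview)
Your proof is correct and follows exactly the approach the paper intends: the proposition is stated with a \qed\ and no explicit proof, relying on the multiplicativity relation $E(\alpha\vep)=E(\alpha)E(\vep)$ recorded in the paragraph immediately preceding it. Your argument simply makes explicit the direct computations of $\vep\beta\konj{\vep}$ for $\beta\in\{i,j,k\}$ that the paper leaves to the reader (and which are confirmed in Proposition~\ref{unit_group}), together with the determinant observation for the parity constraint.
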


Before proceeding, let us review the action of these
isometries on $\BR^3$.

\begin{prp}[cf.\ \cite{CS03}, Section 3.5]\label{unit_group}
  Denote by $H$ the group of units of\/~$\BE$ (see
  Proposition~\ref{integer_quat_unit}), set $Q=\{\pm 1,\pm
  i,\pm j,\pm k\}$ and let $G$ be the subgroup of the
  multiplicative group of $\BH$ generated by $H$ and
  $(1+i)/\sqrt{2}$. Then $G$ contains all the isometries
  investigated in Proposition~\ref{matrix_permute}.  The
  group $G$ has $48$ elements.

  The element $\sigma$ has order~$6$. The rotation
  $R(\sigma)$ maps $\eta=ai+bj+ck$ to $ci+aj+bk$, so it
  permutes the components cyclically.

  The element $(1+i)/\sqrt{2}$ has order~$8$. The
  corresponding rotation $R\big((1+i)/\sqrt{2}\big)$ maps
  $\eta$ to $ai-cj+bk$. The square of this rotation is
  $R(i)$, mapping $\eta$ to $ai-bj-ck$.

  In general, $G$ acts on the set of pure quaternions via
  the rotations $R(\rho)$ with $\rho\in G$. The orbit of
  $\eta$ under $Q$ consists of $\eta$, $-ai-bj+ck$,
  $-ai+bj-ck$, and $+ai-bj-ck$. If we disregard the signs,
  then every other element of~$H$ induces a fixed point free
  permutation on the components of~$\eta$.\qed
\end{prp}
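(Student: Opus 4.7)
The plan is to verify the algebraic facts about $\sigma$ and $\omega:=(1+i)/\sqrt{2}$, count $|G|$ via a normality/coset argument, and then compute the stated rotations and orbits by direct calculation.

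The orders are immediate. From $\sigma^2=\sigma-1$ (Proposition~\ref{sigma_prop}) I get $\sigma^3=\sigma^2-\sigma=-1$, so $\sigma$ has order~$6$. Also $\omega^2=(1+i)^2/2=i$, so $\omega$ has order~$8$ and $\omega^2\in H$. Since conjugation by $\sigma$ will be shown below to cyclically permute $i,j,k$, the elements $(1\pm j)/\sqrt{2}$ and $(1\pm k)/\sqrt{2}$ arise as $\sigma^{m}\omega^{\pm 1}\sigma^{-m}$ for $m=1,2$, while every other element named in Proposition~\ref{matrix_permute} already lies in $H\subseteq G$; hence $G$ contains all of those isometries.

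To obtain $|G|=48$, I will check that $H$ is normalised by $\omega$, so that $G=H\cup H\omega$ is already a subgroup. Conjugation by a unit quaternion fixes real parts and applies $R(\omega)$ to pure parts; since $R(\omega)$ is (as computed below) the $90^\circ$ rotation about the $i$-axis, it permutes $\{\pm i,\pm j,\pm k\}$, and the same sign-permutation on coefficients sends every element $(\pm 1\pm i\pm j\pm k)/2$ to another such element. Thus $\omega H\omega^{-1}\subseteq H$. Because $\omega\notin H$ (its coefficients are irrational), $H$ and $H\omega$ are distinct cosets, giving $|G|=2\cdot 24=48$.

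For the rotation formulas, direct expansions yield $\sigma i\bar\sigma=j$, $\sigma j\bar\sigma=k$, $\sigma k\bar\sigma=i$, whence $R(\sigma)(\eta)=ci+aj+bk$. Since $\omega$ commutes with $i$ we have $\omega i\bar\omega=i$; expanding $(1+i)j(1-i)/2$ and $(1+i)k(1-i)/2$ gives $k$ and $-j$ respectively, so $R(\omega)(\eta)=ai-cj+bk$. Then $R(\omega)^2=R(\omega^2)=R(i)$, and a one-line computation of $i\eta\bar i$ yields $ai-bj-ck$. The orbit under $Q$ then follows from the analogous formulas for $R(\pm j)$ and $R(\pm k)$. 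Finally, every $\tau\in H\setminus Q$ has the form $(\epsilon_0+\epsilon_1 i+\epsilon_2 j+\epsilon_3 k)/2$ with $\epsilon_\ell=\pm 1$; since $\tr\tau=\epsilon_0=\pm 1$ and $\Norm(\tau)=1$, the relation $\tau^2-\epsilon_0\tau+1=0$ gives $\tau^3=-\epsilon_0$, so $R(\tau)$ is a nontrivial rotation of order~$3$. Its axis is the pure part of~$\tau$, i.e.\ the body diagonal $(\epsilon_1,\epsilon_2,\epsilon_3)$ of the unit cube, and any $120^\circ$ rotation about such a diagonal cyclically permutes the three coordinate axes (up to signs), with none fixed. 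The only delicate step is the closure argument for $|G|=48$; everything else is direct computation.
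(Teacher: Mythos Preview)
Your proof is correct and complete. The paper provides no proof of this proposition at all---the statement ends with \qed\ and is followed by ``The proof is left to the reader''---so your direct verification (orders of $\sigma$ and $\omega$, the normality argument giving $|G|=2|H|=48$, and the explicit computations of $R(\sigma)$, $R(\omega)$, $R(i)$ and the $Q$-orbit) is exactly the kind of routine check the authors intended the reader to carry out.
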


\begin{thm}\label{e_matrix_prop}
  $E(\alpha)$ is a primitive integral Euler matrix if and
  only if the non-zero quaternion $\alpha$ belongs to one of
  the following three types.
  \begin{enumerate}
  \item[$(1)$] $\alpha$ is a primitive Lipschitz integral
    quaternion with an odd norm.
  \item[$(2)$] $\alpha=\beta/\sqrt{2}$, where $\beta$ is a
    primitive Lipschitz integral quaternion such that
    $\Norm(\beta) \equiv 2~(4)$, or equivalently: exactly
    two components of $\beta$ are odd.
  \item[$(3)$] $\alpha=\beta/2$, where $\beta$ is a
    primitive Lipschitz integral quaternion such that
    \hbox{$\Norm(\beta) \equiv 4~(8)$,} or equivalently: all
    four components of $\beta$ are odd (so $\alpha$ is a
    Hurwitz integral quaternion).
  \end{enumerate}
  In all cases $\Norm(\alpha)$ is an odd integer. Each
  column and each row of $E(\alpha)$ contains exactly one
  odd entry. The number of odd entries in the main diagonal
  in types $(1)$, $(2)$, $(3)$ are $3$, $1$, $0$,
  respectively.
\end{thm}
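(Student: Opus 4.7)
The plan is to prove both directions by first obtaining a tight parameterization of those $\alpha \in \BH \setminus \{0\}$ for which $E(\alpha)$ has integer entries, and then running a finite case analysis that determines exactly when the resulting matrix is primitive. The three types emerge from this case analysis, and the final claim about the location of odd entries is an orthogonality argument working modulo~$2$.

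For necessity, assume $E(\alpha)$ has integer entries. From $\det E(\alpha) = \Norm(\alpha)^3 \in \BZ$ and the fact that each column of $E(\alpha)$ has squared length $\Norm(\alpha)^2 \in \BZ$, one deduces $\Norm(\alpha) \in \BZ_{>0}$. Writing $\alpha = m + ni + pj + qk$ with real components, suitable integer linear combinations of the nine entries of $E(\alpha)$ together with $\Norm(\alpha)$ yield $4m^2, 4n^2, 4p^2, 4q^2 \in \BZ$ and $4mn, 4mp, 4mq, 4np, 4nq, 4pq \in \BZ$. A short elementary lemma then shows that any four reals whose pairwise products and squares are all integers must simultaneously be integer multiples of a single $\sqrt{k_0}$ for a unique positive squarefree integer $k_0$. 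Applied to $(2m, 2n, 2p, 2q)$ this gives $\alpha = (\sqrt{k_0}/2)\beta$ for some nonzero $\beta \in \BZ^4$. Writing $\beta = g\beta'$ with $\beta'$ primitive Lipschitz, we have $\alpha = (g\sqrt{k_0}/2)\beta'$ and $E(\alpha) = (g^2 k_0/4)\,E(\beta')$.

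Next I would run a finite case analysis on $k_0$ and on the parity of $g$. A direct parity calculation shows that $4 \mid E(\beta')$ forces all four components of $\beta'$ to have the same parity (hence all odd, by primitivity), and that $2 \mid E(\beta')$ forces an even number of components to be odd (hence exactly two, by primitivity). Combining these with the integrality requirement $(g^2 k_0/4)\,e_{ij}(\beta') \in \BZ$ and with the requirement that no common factor greater than $1$ remain in $E(\alpha)$, only three possibilities survive: $k_0 = 1$ with $g = 2$ and $\Norm(\beta')$ odd (yielding Type~(1)); $k_0 = 1$ with $g = 1$ and $\beta'$ all-odd (yielding Type~(3)); $k_0 = 2$ with $g = 1$ and $\beta'$ having exactly two odd components (yielding Type~(2)). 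All remaining combinations (odd squarefree $k_0 \geq 3$, even squarefree $k_0 \geq 6$, etc.) are ruled out because they leave a common factor greater than $1$ in every entry of $E(\alpha)$. The sufficiency direction---checking that each listed type does produce an integer matrix---is a routine verification using $m^2 \equiv 1 \pmod 8$ for odd $m$ and $m^2 \equiv 0 \pmod 4$ for even~$m$.

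That $\Norm(\alpha)$ is odd in all three types follows from the same parity computations. For the claim that each row and column of $E(\alpha)$ contains exactly one odd entry, the crucial observation is that each column of $E(\alpha)$ has squared length $\Norm(\alpha)^2$, which is odd, so an odd number (one or three) of its entries are odd; a column with three odd entries cannot be orthogonal modulo~$2$ to another column that contains any odd entry, because the inner product would then be an odd sum of odd products. Hence each column, and symmetrically each row, contains exactly one odd entry; the nine odd positions form a $3\times 3$ permutation pattern, and the number of odd diagonal entries equals the number of fixed points of that permutation, which is $3$, $1$, or $0$ in types (1), (2), (3)---read directly from the parity of the diagonal entries in each type. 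For primitivity against an odd prime $\ell$: if $\ell$ divided every entry of $E(\alpha)$, then $\ell^2 \mid \Norm(\alpha)^2$, so $\ell \mid \Norm(\alpha)$, and applying \lmref{quat_lnko} to the primitive Lipschitz quaternion underlying $\alpha$ (together with the explicit form of $\alpha$ supplied by Types~(1)--(3)) forces $\ell$ to divide that primitive quaternion, a contradiction. The main obstacle is the parameterization step: one must carefully handle the possibility that $\alpha$ has irrational components (as in Type~(2)) and keep track of how $g$ and $k_0$ interact with primitivity so that the three types are mutually exclusive and exhaustive. Everything after that is parity arithmetic and a single invocation of \lmref{quat_lnko}.
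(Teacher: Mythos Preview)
Your architecture matches the paper's for the necessity direction: both extract from the nine integer entries (together with $\Norm(\alpha)\in\BZ$) that $4m^2,4n^2,4p^2,4q^2$ and all the mixed products $4mn,\ldots$ are integers, parameterize $\alpha$ as $(g\sqrt{k_0}/2)\beta'$ with $\beta'$ primitive Lipschitz, and run a finite case analysis on $(g,k_0)$. Where you diverge is in sufficiency. The paper reduces types~(2) and~(3) to type~(1) by right-multiplying $\alpha$ by a norm-one element ($\sigma^{\pm1}$ or $(1+u)/\sqrt2$), so that integrality, primitivity, and the diagonal parity pattern all follow from the single type-(1) computation together with the column permutation induced by that unit. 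You instead treat each type directly by parity arithmetic, which is a legitimate alternative and spares you those two reduction lemmas; your orthogonality-mod-$2$ argument for ``one odd entry per row and column'' is a clean variant of the paper's mod-$4$ argument on the row norms.

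The one step that fails as written is your primitivity argument against an odd prime~$\ell$. From $\ell\mid E(\alpha)$ you correctly get $\ell\mid\Norm(\alpha)$, but \lmref{quat_lnko} then only yields a factorization $\beta'=\pi\gamma$ with $\Norm(\pi)=\ell$; it does \emph{not} force $\ell$ to divide the primitive Lipschitz quaternion $\beta'$ (take $\alpha=\pi$ itself: $\ell\mid\Norm(\pi)$, $\pi$ is primitive, yet $E(\pi)$ is generally not divisible by~$\ell$, so the implication you want is false). The repair is to reuse the same linear combinations you already computed in the necessity direction: the three diagonal entries together with $\Norm(\alpha)$ are $\BZ$-combinations giving $4m^2,4n^2,4p^2,4q^2$, so $\ell$ dividing all entries and $\Norm(\alpha)$ forces $\ell\mid 4m^2,\ldots$, hence $\ell\mid m,n,p,q$, contradicting primitivity of~$\beta'$. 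For types~(2) and~(3) this goes through unchanged because $E(\alpha)$ is $E(\beta')$ divided by a power of~$2$, and $\ell$ is odd. No quaternion factorization is needed; once you substitute this argument for the appeal to \lmref{quat_lnko}, your route is complete.
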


The proof requires four lemmas, whose proofs are elementary
calculations.

\begin{lm}\label{lipschitz_not_div_8}
  If $\beta$ is a primitive Lipschitz integral quaternion,
  then $\Norm(\beta)$ cannot be divisible by $8$. It is
  congruent to $2$ modulo~$4$ if and only if $\beta$ has exactly two
  odd components, and is congruent to $4$ modulo~$8$ if and only if all
  components of $\beta$ are odd.\qed
\end{lm}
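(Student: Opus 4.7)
My plan is to analyze $\Norm(\beta)=a^2+b^2+c^2+d^2$ modulo $8$ by splitting into cases based on the number of odd components. The key arithmetic facts are the well-known congruences: an even square is $0\pmod 4$ (in fact $0$ or $4\pmod 8$, depending on whether the integer is $\equiv 0$ or $2\pmod 4$), while an odd square is $\equiv 1\pmod 8$. Because $\beta$ is primitive, $\gcd(a,b,c,d)=1$, so not all four components can be even; hence the number $e$ of odd components among $a,b,c,d$ satisfies $e\ge 1$.

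Next I would just run through the four possibilities $e=1,2,3,4$. The odd components contribute exactly $e\pmod 8$ to the norm, while each even component contributes either $0$ or $4\pmod 8$. So the possibilities for $\Norm(\beta)\pmod 8$ are: for $e=1$, one of $1,5$; for $e=2$, one of $2,6$; for $e=3$, one of $3,7$; and for $e=4$, exactly $4$. In particular $0\pmod 8$ never arises, establishing the first claim.

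From the same case analysis, $\Norm(\beta)\equiv 2\pmod 4$ happens precisely in the case $e=2$ (since the cases $e=1,3$ give odd norm and $e=4$ gives $4\pmod 8$), and $\Norm(\beta)\equiv 4\pmod 8$ happens precisely in the case $e=4$. This yields both equivalences and completes the proof.

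I don't anticipate any real obstacle here; the only thing to be careful about is using $\pmod 8$ rather than $\pmod 4$ for the even-square contributions, so that the cases $e=2$ and $e=4$ can be properly separated from each other and from $\equiv 0\pmod 8$.
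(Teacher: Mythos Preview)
Your proof is correct and follows essentially the same approach as the paper: the paper's proof also hinges on the fact that an odd square is $\equiv 1\pmod 8$, together with the observation that primitivity forces at least one odd component. Your write-up is simply a more explicit version of the same case analysis.
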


\begin{lm}\label{e_matrix odd_norm_lem}
  If $E(\alpha)$ is a primitive integral Euler matrix, then
  $\Norm(\alpha)$ is an odd integer. Each column and each
  row contains exactly one odd entry.\qed
\end{lm}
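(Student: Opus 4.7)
The plan is to analyze the matrix $E(\alpha)$ modulo $4$, using the fact that each column is the image of a standard basis vector under the dilated rotation $R(\alpha)$, so it has squared length $d^2$, where $d=\Norm(\alpha)$.

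First I would verify that $d$ itself is a positive integer, by the same trick used in Proposition~\ref{icube_length_int}. By Theorem~\ref{euler_matrix}(3), $E(\alpha)$ equals $d$ times an orthogonal matrix, so for each column $(a,b,c)^{\mathsf T}$ we have $a^2+b^2+c^2=d^2$; since the entries of $E(\alpha)$ are integers, $d^2\in\BZ$. On the other hand $\det E(\alpha)=d^3$, which is an integer as well. Hence $d=d^3/d^2\in\BQ$, and together with $d^2\in\BZ$ this forces $d\in\BZ$.

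Next I would rule out $d$ being even by reducing the identity $a^2+b^2+c^2=d^2$ modulo $4$ for every column. Integer squares are congruent to $0$ or $1\pmod 4$, so the only way three of them can sum to $0\pmod 4$ is if all three are even. If $d$ were even, this conclusion would apply to every column, making every entry of $E(\alpha)$ even and contradicting primitivity. Therefore $d$ is odd.

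With $d$ odd we have $d^2\equiv 1\pmod 4$, and the only way three integer squares can sum to $1\pmod 4$ is with exactly one odd and two even, so each column contains exactly one odd entry. The same argument applies to the rows, because $E(\alpha)E(\alpha)^{\mathsf T}=d^2 I$ makes the rows also pairwise orthogonal with common squared length $d^2$. There is no real obstacle in the argument; the only preparatory subtlety is that one must first know $d\in\BZ$ before speaking of its parity, which is the standard integrality trick recalled above.
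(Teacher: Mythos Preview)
Your argument is correct and coincides with the paper's own proof: it too invokes Proposition~\ref{icube_length_int} to get $\Norm(\alpha)\in\BZ$, then reduces $a^2+b^2+c^2=\Norm(\alpha)^2$ modulo~$4$ to rule out even $\Norm(\alpha)$ by primitivity and to conclude that exactly one entry in each row (and column) is odd.
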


\begin{lm}\label{alpha_sigma_lem}
  If the quaternion $\alpha = (m+ni+pj+qk)/2$ belongs to
  class $(3)$ of Theorem~\ref{e_matrix_prop}, then either
  $\alpha\sigma$ or $\alpha\sigma^{-1}$ is a quaternion of
  class $(1)$.\qed
\end{lm}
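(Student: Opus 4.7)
The plan is to exploit the additive identity $\sigma + \sigma^{-1} = 1$, which follows from $\sigma^2 = \sigma - 1$ (Proposition~\ref{sigma_prop}); equivalently, $\sigma^{-1}=\konj{\sigma}=1-\sigma\in\BE$. Setting $\gamma_+ = \alpha\sigma$ and $\gamma_- = \alpha\sigma^{-1}$, one has $\gamma_+ + \gamma_- = \alpha$. Both $\gamma_\pm$ lie in $\BE$, and by Proposition~\ref{integer_quat_elementary} each is either Lipschitz (all integer components) or has all four components half-odd. Since the sum of two such ``half-odd'' quaternions has integer components, while $\alpha$ itself has only half-odd components (being of class $(3)$), it cannot happen that both $\gamma_\pm$ are non-Lipschitz. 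Thus at least one of $\gamma_+,\gamma_-$, call it $\gamma$, has integer coefficients.

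It remains to verify that $\gamma$ has odd norm and is primitive. The norm is immediate: $\Norm(\gamma) = \Norm(\alpha)\Norm(\sigma^{\pm 1}) = \Norm(\alpha) = (m^2+n^2+p^2+q^2)/4$, and since each of $m,n,p,q$ is odd, each square is $\equiv 1 \pmod{8}$, so this quotient is an odd integer. For primitivity, suppose a prime $\ell$ divided every component of $\gamma$; then $\ell\mid\gamma$ in $\BE$, and right multiplication by $\sigma^{\mp 1}\in\BE$ yields $\ell\mid\alpha$ in $\BE$. Hence $\alpha/\ell\in\BE$, and since its components are $m/(2\ell),\ldots,q/(2\ell)$, the dichotomy of Proposition~\ref{integer_quat_elementary} forces either $2\ell\mid m$ (impossible, as $m$ is odd) or $\ell\mid\gcd(m,n,p,q)=1$. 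Thus no such prime exists, so $\gamma$ is a primitive Lipschitz integral quaternion with odd norm, i.e., of class $(1)$.

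The only delicate point is the initial observation that one of the two products must be Lipschitz; the remaining two verifications are routine arithmetic inside $\BE$. Note the argument is symmetric in $\sigma$ and $\sigma^{-1}$, so we cannot determine a priori which of $\alpha\sigma$ and $\alpha\sigma^{-1}$ lies in the Lipschitz order---this is exactly why the lemma is phrased as a disjunction.
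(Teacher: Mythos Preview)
Your proof is correct and, in fact, more elegant than the paper's. The paper computes $\alpha\sigma$ and $\alpha\sigma^{-1}$ explicitly, observes that the four numerator coefficients in each are mutually congruent modulo~$4$, and then argues that exactly one of $m-n-p-q$ and $m+n+p+q$ is divisible by~$4$ to conclude that one of the two products is Lipschitz; the norm and primitivity claims are then dispatched in a sentence. By contrast, your argument avoids any explicit coefficient computation: the identity $\sigma+\sigma^{-1}=1$ reduces the question to a parity dichotomy in~$\BE$, and your primitivity argument via $\ell\mid\alpha$ in~$\BE$ is cleaner than appealing to ``obvious'' relative primality of the computed coefficients. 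The trade-off is that the paper's explicit formulae make visible \emph{which} of $\alpha\sigma$, $\alpha\sigma^{-1}$ is Lipschitz (namely, the one whose constant-term numerator is divisible by~$4$), whereas your approach is intrinsically nonconstructive on this point---as you yourself note in the final paragraph.
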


\begin{lm}\label{type_2_multiply_lem}
  Every quaternion $\alpha = (m+ni+pj+qk)/\sqrt{2}$ of class
  $(2)$ can be multiplied on the right by a suitable unit
  $(1+u)/\sqrt{2}$ to transform it to a quaternion of class $(1)$, 
  where $u\in\{i,j,k\}$.\qed
\end{lm}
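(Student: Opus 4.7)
The plan is to compute $\beta(1+u)/2$ explicitly for each $u\in\{i,j,k\}$, decide which choice of $u$ makes the result have integer coefficients, and then verify that this result is primitive with odd norm.

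First I would multiply out $\beta(1+i)/2$, $\beta(1+j)/2$, $\beta(1+k)/2$ directly. For example,
\[
\beta(1+i)/2 \;=\; \frac{(m-n)+(m+n)i+(p+q)j+(q-p)k}{2},
\]
and similarly for the other two. Inspecting numerators, $\beta(1+i)/2$ is Lipschitz integral exactly when $m\equiv n$ and $p\equiv q\pmod 2$; $\beta(1+j)/2$ is Lipschitz integral exactly when $m\equiv p$ and $n\equiv q\pmod 2$; and $\beta(1+k)/2$ is Lipschitz integral exactly when $m\equiv q$ and $n\equiv p\pmod 2$. Since $\beta$ is of type $(2)$, exactly two of the four coefficients are odd. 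The three partitions of $\{m,n,p,q\}$ into two pairs correspond bijectively to the three choices $u\in\{i,j,k\}$, and the parity distribution picks out exactly one of these partitions; that $u$ is the one to use.

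With this $u$ chosen, set $\gamma=\alpha(1+u)/\sqrt{2}=\beta(1+u)/2$. By the previous step, $\gamma\in\BE$ has integer coefficients. Using multiplicativity of the norm together with $\Norm\bigl((1+u)/\sqrt{2}\bigr)=1$ one gets $\Norm(\gamma)=\Norm(\alpha)=\Norm(\beta)/2$, which is odd because $\Norm(\beta)\equiv 2\pmod 4$. To finish I would verify primitivity of $\gamma$: its norm being odd rules out $2$ dividing the gcd of its coefficients, and any odd prime $r$ dividing all four coefficients would, via short integer linear combinations of the displayed formulas (e.g.\ $a+b=m$, $b-a=n$, $c-d=p$, $c+d=q$ in the case $u=i$), divide $\gcd(m,n,p,q)=1$.

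I do not expect a real obstacle: the content is the parity bookkeeping in the first step, which amounts to noticing that the three admissible pairings of four parities correspond to the three units $(1+u)/\sqrt{2}$. The primitivity check is automatic once the formulas are written down.
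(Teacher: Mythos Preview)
Your proof is correct and takes essentially the same approach as the paper: pick $u$ according to which of $n,p,q$ shares parity with $m$, compute $\alpha(1+u)/\sqrt{2}=\beta(1+u)/2$, and check integer coefficients and odd norm. The paper abbreviates by treating only the case $m\equiv n\pmod 2$ after a WLOG reduction and writing $\alpha$ in a convenient form before multiplying, and it omits the explicit primitivity check you supply (that point is handled later, in the proof of Theorem~\ref{e_matrix_prop}).
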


\begin{proof}[Proof of Theorem~\ref{e_matrix_prop}]
  Put $\alpha=m+ni+pj+kq$ with real numbers $m,n,p,q$, and
  assume that the Euler matrix $E(\alpha)$ given in
  Theorem~\ref{euler_matrix} has integral entries and is
  primitive. By Lemma~\ref{e_matrix odd_norm_lem},
  $\Norm(\alpha) = m^2+n^2+p^2+q^2$ is an integer, and the
  diagonal elements of $E(\alpha)$ are also integers.
  Taking linear combinations of these quadratic forms, we get
  that $4m^2$, $4n^2$, $4p^2$, and $4q^2$ are all integers.
  By adding and subtracting symmetric off-diagonal elements,
  we obtain that $4mn$, $4mp$, $4mq$,
  $4np$, $4nq$, and $4pq$ are integers as well.  Therefore,
  the square-free parts of the non-zero numbers among $4m^2,
  4n^2, 4p^2$, and $4q^2$ are the same. Denote this common
  square-free part by~$r$.  The quaternion
  $\alpha=m+ni+pj+qk$ can be written uniquely in the form
  \begin{equation}\label{alpha_unique}
    \alpha=\frac{k\sqrt{r}}{2}(a+bi+cj+dk),
  \end{equation}
  where $k \in \BN$, $a,b,c,d \in \BZ$, $(a,b,c,d) = 1$.
  Since the matrix $E(\alpha)$ is primitive, neither~$k$
  nor the square-free $r$ can have any odd prime divisor,
  and $k$ (as a power of~$2$) cannot be greater than~$2$.
  Hence, both $k$ and $r$ are elements of the set $\{1,2\}$,
  but $k=r=2$ violates primitivity of $E(\alpha)$.
  Thus, we are left with the cases
  \begin{enumerate}
    \item[$(a)$] $k = 2$, $r = 1$;
    \item[$(b)$] $k = 1$, $r = 2$;
    \item[$(c)$] $k = r = 1$.
  \end{enumerate}
  These correspond exactly to the cases listed as $(1)$,
  $(2)$, and $(3)$ in Theorem~\ref{e_matrix_prop}. Lemma
  \ref{e_matrix odd_norm_lem} shows that $\Norm(\alpha)$ is
  an odd integer. Therefore, Lemma~\ref{lipschitz_not_div_8}
  finishes the proof of one implication of the theorem.

  Assume now that the quaternion $\alpha$ is one of the
  types $(1)$--$(3)$ in Theorem~\ref{e_matrix_prop}. We want to
  show that $E(\alpha)$ is a primitive integer matrix.  By
  Lemmas \ref{alpha_sigma_lem} and \ref{type_2_multiply_lem},
  there exists a suitable quaternion $\vep \in \BH$ with
  $\Norm(\vep) = 1$ such that $\alpha\vep$ is of class $(1)$,
  and by Proposition~\ref{matrix_permute}, we see that
  $E(\alpha\vep) = E(\alpha)E(\vep)$ is a primitive integral
  matrix if and only if $E(\alpha)$ is.

  We show that $E(\alpha\vep)$ is primitive. Since
  $\Norm(\alpha\vep)$ is odd by assumption, the entries in
  the main diagonal of $E(\alpha\vep)$ are odd. Suppose 
  that an odd rational prime $t$ divides all entries of 
  $E(\alpha\vep)$.  Let $\alpha\vep=m+ni+pj+qk$.  
  The simple calculation preceding (\ref{alpha_unique}) 
  shows that $t$ divides the numbers $4m^2, 4n^2, 4p^2$, 
  and $4q^2$, violating the primitivity of $\alpha\vep$. 
  Thus, $E(\alpha\vep) = E(\alpha)E(\vep)$ is indeed a 
  primitive integer matrix.

  We now show the last statement of the theorem.  If
  $\alpha$ belongs to type~$(1)$, then, as we saw above, the
  entries in the main diagonal are odd, while the other
  entries are clearly even. Quaternions of class $(2)$ are
  handled by Lemma~\ref{type_2_multiply_lem}, and these
  correspond to the interchange of two columns by $(B)$ of
  Proposition~\ref{matrix_permute}. The type~$(3)$ case is
  handled by Lemma~\ref{alpha_sigma_lem} and yields a
  cyclic fixed point free permutation of the columns, by
  $(A)$ of Proposition~\ref{matrix_permute}. This completes
  the proof of Theorem~\ref{e_matrix_prop}.
\end{proof}

\begin{cor}\label{sarkozy_cor}
  Consider a primitive icube as the columns of a matrix~$M$.
  Then there exists a Lipschitz integral quaternion $\alpha$
  such that, by permuting the columns of~$E(\alpha)$ and
  changing the sign of the last column if necessary, we
  get~$M$.
\end{cor}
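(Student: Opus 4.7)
The plan is to first pull $M$ back to \emph{some} Euler matrix via Theorem~\ref{euler_matrix}, then upgrade the associated quaternion to a Lipschitz type-$(1)$ one using Lemmas~\ref{alpha_sigma_lem} and~\ref{type_2_multiply_lem}, and finally absorb the residual signed permutation into the quaternion by right-multiplication with one of the four Lipschitz Hurwitz units $h\in\{1,i,j,k\}$.

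In detail, let $d$ be the common column length of $M$ (an integer by Proposition~\ref{icube_length_int}) and $\vep_0=\operatorname{sign}(\det M)$, so that $M^+:=M\cdot\operatorname{diag}(1,1,\vep_0)$ has positive determinant and $M^+/d\in\SO(3)$. Parts~(2) and~(3) of Theorem~\ref{euler_matrix} then provide a $\gamma\in\BH$ with $\Norm(\gamma)=d$ and $E(\gamma)=M^+$. Primitivity of $M$ transfers to $M^+$ (signed permutations preserve the gcd of the entries), so Theorem~\ref{e_matrix_prop} puts $\gamma$ in one of its three types; applying Lemma~\ref{alpha_sigma_lem} (type~$(3)$) or Lemma~\ref{type_2_multiply_lem} (type~$(2)$) produces a unit $\rho\in G$ for which $\alpha_0:=\gamma\rho$ is Lipschitz of type~$(1)$ (in the type~$(1)$ case one takes $\rho=1$). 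Writing $E(\alpha_0)=E(\gamma)E(\rho)$ then gives $M^+=E(\alpha_0)\cdot T$, where $T:=E(\rho^{-1})\in\SO(3)$ is a signed permutation of one of the cyclic-permutation or swap-with-sign shapes spelled out in Proposition~\ref{matrix_permute}.

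The main obstacle is that $T$ itself is not in general of the ``permutation followed by possible negation of the last column'' shape $PD$ (with $P$ a permutation matrix and $D=\operatorname{diag}(1,1,\pm 1)$), as a direct check of $E((1-i)/\sqrt{2})$ in the type-$(2)$ case shows. The plan is to prove the finite combinatorial fact that every signed permutation in $\SO(3)$ decomposes \emph{uniquely} as $E(h)\cdot PD$ with $h\in\{1,i,j,k\}$ and $PD$ of the restricted form. This reduces to a $24=4\cdot 6$ counting argument: the four matrices $E(1),E(i),E(j),E(k)$ form a Klein four subgroup of $\SO(3)$ (the diagonal $\pm 1$ matrices with positive determinant), the six $PD$-matrices lying in $\SO(3)$ meet this subgroup only at $I$, and there are exactly $24$ rotational symmetries of the cube. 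Decomposing $T=E(h)\cdot PD$ accordingly and setting $\alpha:=\alpha_0 h$---which is still Lipschitz of type~$(1)$ because right-multiplication by a Lipschitz unit preserves Lipschitz-ness, primitivity, and odd norm---yields $M^+=E(\alpha)\cdot PD$. Finally, multiplication on the right by $\operatorname{diag}(1,1,\vep_0)$ converts $D$ into another matrix of the form $\operatorname{diag}(1,1,\pm 1)$, producing $M=E(\alpha)\cdot P\cdot D'$ in the required shape.
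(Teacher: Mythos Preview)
Your argument is essentially correct and follows the same overall arc as the paper's proof: fix the orientation, realize $M^+$ as an Euler matrix, normalize the quaternion to Lipschitz type~$(1)$, and then absorb the residual signed permutation. There is one small gap in your coset-counting step: the hypothesis that the Klein four group $V=\{E(1),E(i),E(j),E(k)\}$ and the six $PD$-matrices ``meet only at $I$'' together with $4\cdot 6=24$ does \emph{not} by itself force the product map $(h,PD)\mapsto E(h)\cdot PD$ to be bijective (for a subgroup $V$ and a mere subset $S$ of a group $G$, the conditions $|V|\,|S|=|G|$ and $V\cap S=\{1\}$ do not imply $VS=G$). What you actually need is that the six $PD$-matrices lie in \emph{distinct} left $V$-cosets; this follows immediately since $V$ consists of diagonal matrices and the six $PD$-matrices have pairwise distinct underlying permutations, but you should say so.

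The paper's own proof avoids this coset decomposition entirely by exploiting the last sentence of Theorem~\ref{e_matrix_prop}: every row and column of a primitive integral Euler matrix has exactly one odd entry, and the number of odd entries on the diagonal is $3$, $1$, $0$ according as $\alpha$ is of type $(1)$, $(2)$, $(3)$. Thus, after writing $M^+=E(\gamma)$, one simply permutes the columns so that the odd entries land on the main diagonal; by the theorem this forces the new generating quaternion to be of type~$(1)$, i.e.\ Lipschitz. Any stray sign change on a column other than the third is then repaired via Proposition~\ref{matrix_permute}\,(C), which again multiplies $\alpha$ by a Lipschitz unit. This is shorter and more concrete than your route through Lemmas~\ref{alpha_sigma_lem}--\ref{type_2_multiply_lem} followed by the abstract $24=4\cdot 6$ decomposition; on the other hand, your version makes explicit the group-theoretic reason (\,$V$ is normal with quotient $S_3$\,) why the clean-up always succeeds.
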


\begin{proof}
  Change the sign of the last column if and only if $M$ is
  orientation-reversing. The new~$M$ can be written as
  $M=E(\alpha)$, for some quaternion~$\alpha$ (with real
  coefficients) by Theorem~\ref{euler_matrix}.  This
  $\alpha$ belongs to one of the three types listed in
  Theorem~\ref{e_matrix_prop}.  Modify $\alpha$ using
  Proposition~\ref{matrix_permute} so that the odd entries
  move to the main diagonal. Then we get a Lipschitz
  integral quaternion by the last statement of
  Theorem~\ref{e_matrix_prop}. If this transformation
  changes the sign of a column other than what was
  initially the third, then use
  Proposition~\ref{matrix_permute} to change the sign of two
  columns to what they were originally.
\end{proof}

\section{A representation of pure integral quaternions}
\label{sec_3sqares}

\noindent
In this section we decompose single pure quaternions.
Geometrically, this means that we find a large cubic lattice
that contains the corresponding vector. Algebraically, a
cubic lattice is the subgroup of all quaternions
$\delta=\alpha\beta\,\konj{\alpha}$, where $\alpha$ is a
fixed Hurwitz integral quaternion and $\beta$ runs over all
pure integral quaternions. The generating icube is given by
$\alpha i\,\konj{\alpha}$, $\alpha j\,\konj{\alpha}$,
$\alpha k\,\konj{\alpha}$.

The primitive case is easier, and is handled by
Theorem~\ref{3repr}. This already implies
Theorem~\ref{icube_exists}, and the existence statement of
Theorem~\ref{primitive_unique_big_thm}.
Theorem~\ref{twin_quad_pow} explains how a vector in a cubic
lattice can be divisible by a prime ``unexpectedly''.  This
will be used in the characterization of twin-complete
numbers, and is also sufficient to obtain a classical result
about counting all vectors of a given length
(Theorem~\ref{s_count}, Corollary~\ref{s_count_cor}).

The results in this section are closely related to those in
\cite{Pal40}, but that paper deals primarily with Lipschitz integral
quaternions.

\begin{lm}\label{3repr_lm}
  Let $\delta\in\BE$ be a pure quaternion and $p\in\BZ$ a
  prime such that $p^2\mid\Norm(\delta)$ but $p$ does not
  divide $\delta$. Then there exists an element $\pi\in\BE$
  whose norm is $p$ such that $\delta=\pi\delta_1\konj{\pi}$,
  for some $\delta_1\in\BE$.
\end{lm}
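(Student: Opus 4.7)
The plan is to apply the structural Lemmas \ref{quat_lnko} and \ref{twin_primeprop} in tandem, using the purity of $\delta$ as the bridge between left and right factorization.

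First, I would invoke Lemma \ref{quat_lnko} with the prime $p$: since $p\mid\Norm(\delta)$ and $p\nmid\delta$, it produces a quaternion $\pi$ with $\Norm(\pi)=p$ and a factorization $\delta=\pi\alpha$ for some $\alpha\in\BE$. Taking norms and using $p^2\mid\Norm(\delta)$ immediately yields $p\mid\Norm(\alpha)$. The task then reduces to showing that $\konj{\pi}$ right-divides $\alpha$; equivalently (by taking conjugates), that $\pi$ left-divides $\konj{\alpha}$. Indeed, once $\konj{\alpha}=\pi\tau$ is known, conjugation gives $\alpha=\konj{\tau}\konj{\pi}$, and setting $\delta_1=\konj{\tau}$ produces the desired identity $\delta=\pi\alpha=\pi\delta_1\konj{\pi}$.

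To establish $\pi\mid\konj{\alpha}$, I would apply Lemma \ref{twin_primeprop} with the substitutions $\theta=\delta$, $\eta=\konj{\alpha}$, and the lemma's prime quaternion being our $\pi$. The hypotheses $\pi\mid\delta$ and $p\nmid\delta$ are immediate. The remaining hypothesis $p\mid\konj{\theta}\eta$ is exactly where purity enters: since $\konj{\delta}=-\delta=-\pi\alpha$, one computes
\begin{equation*}
\konj{\delta}\,\konj{\alpha} \;=\; -\pi\alpha\konj{\alpha} \;=\; -\pi\,\Norm(\alpha),
\end{equation*}
which is divisible by $p$ because $p\mid\Norm(\alpha)$. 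Lemma \ref{twin_primeprop} then delivers $\pi\mid\konj{\alpha}$, closing the argument.

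The only delicate step is recognizing how to set up Lemma \ref{twin_primeprop}: the substitutions $\theta=\delta$ and $\eta=\konj{\alpha}$ are forced by the need for $\konj{\theta}\eta$ to collapse, via $\konj{\delta}=-\delta$, into a scalar multiple of $\Norm(\alpha)$. Once this trick is identified, the remainder is a short formal computation with no substantive obstacle; note also that the resulting $\delta_1$ is automatically pure, since $\pi(\konj{\delta_1}+\delta_1)\konj{\pi}=\konj{\delta}+\delta=0$ forces $\konj{\delta_1}=-\delta_1$.
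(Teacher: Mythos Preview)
Your proof is correct. The overall strategy coincides with the paper's: factor $\delta=\pi\alpha$ on the left via Lemma~\ref{quat_lnko}, then exploit purity $\konj{\delta}=-\delta$ to force the matching right factor $\konj{\pi}$. The packaging differs slightly. The paper performs a second factorization, writing $\alpha=\delta_3\pi_1$ by the \emph{dual} of Lemma~\ref{quat_lnko}, and then uses purity to see that $\konj{\pi_1}$ is also a left divisor of~$\delta$ of norm~$p$; the uniqueness clause of Lemma~\ref{quat_lnko} then forces $\konj{\pi_1}$ and $\pi$ to be right associates. You instead invoke Lemma~\ref{twin_primeprop} with $\theta=\delta$, $\eta=\konj{\alpha}$, which bundles that uniqueness argument into a single step and avoids the explicit second factorization. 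Both routes are short and rest on the same ingredients; yours is marginally more streamlined, while the paper's makes the symmetric left/right structure more visible.
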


\begin{proof}
  By Lemma~\ref{quat_lnko}, we get that
  $\delta=\pi\delta_2$, for some $\pi,\delta_2\in\BE$ such
  that $\Norm(\pi)=p$. Then
  $\Norm(\delta)=p\Norm(\delta_2)$. Hence, $p$ divides
  $\Norm(\delta_2)$ but $p$ clearly does not divide
  $\delta_2$. By the dual of Lemma~\ref{quat_lnko}, we
  obtain an element $\pi_1\in\BE$ with norm~$p$ such that
  $\delta_2=\delta_3\pi_1$. Hence, $\delta=\pi\delta_3\pi_1$.
  Taking conjugates, we get
  $\konj{\delta}=\konj{\pi_1}\konj{\delta_3}\konj{\pi}$; 
  however, $\delta$ is a pure quaternion, hence
  $\konj{\delta}=-\delta$. Therefore, $\delta$ is divisible
  by $\pi$ and by $\konj{\pi_1}$ on the left. By the
  uniqueness statement of Lemma~\ref{quat_lnko}, we get
  that $\pi$ and $\konj{\pi_1}$ are right associates.
  Thus, $\delta=\pi\delta_3\pi_1$ can indeed be written as
  $\pi\delta_1\konj{\pi}$.
\end{proof}

\begin{thm}\label{3repr}
  Let $\delta\in\BE$ be a pure quaternion with
  $\Norm(\delta)= nm^2$. Suppose that no integer prime
  divisor of $m$ divides~$\delta$. Then $\delta$ can be
  written as $\alpha\beta\,\konj{\alpha}$, for some
  $\alpha,\beta\in\BE$ such that $\Norm(\alpha)=m$ and
  $\Norm(\beta)=n$. Here $\alpha$ is uniquely determined,
  that is, any two such elements $\alpha$ are right
  associates of each other and the corresponding elements
  $\beta$ are group-conjugates of each other via a unit
  of\/~$\BE$. If $n=1$, then $\beta$ can be chosen freely to
  be any element of $\{\pm i, \pm j, \pm k\}$.
\end{thm}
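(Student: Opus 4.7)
The plan is induction on the number of prime factors of $m$ (with multiplicity), using Lemma~\ref{3repr_lm} to drive the existence and Lemma~\ref{quat_lnko} to drive the uniqueness. The base case $m=1$ is trivial: take $\alpha=1$ and $\beta=\delta$. For the inductive existence step, I would pick any prime $p\mid m$; the hypothesis $p\nmid\delta$ together with $p^2\mid\Norm(\delta)$ activates Lemma~\ref{3repr_lm} to produce $\pi_0\in\BE$ with $\Norm(\pi_0)=p$ and $\delta=\pi_0\delta_1\konj{\pi_0}$. Conjugating and using $\konj\delta=-\delta$ forces $\delta_1$ to be pure, with $\Norm(\delta_1)=n(m/p)^2$. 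The crucial check for the induction is that no prime $q\mid m/p$ divides $\delta_1$: since $q\in\BZ$ is central in $\BE$, $q\mid\delta_1$ would give $\delta=q(\pi_0\delta_1'\konj{\pi_0})$ divisible by $q$, contradicting the hypothesis on $\delta$. Then $\alpha:=\pi_0\alpha_1$ with the inductively produced $\alpha_1$ finishes existence.

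For the uniqueness of $\alpha$ up to right association I would mount a parallel induction. Given $\delta=\alpha_1\beta_1\konj{\alpha_1}=\alpha_2\beta_2\konj{\alpha_2}$ and a prime $p\mid m$, first note $p\nmid\alpha_i$ (else $p^2\mid m$ would force $p\mid\delta$). Lemma~\ref{quat_lnko} then factors $\alpha_i=\pi_i\alpha_i'$ with $\Norm(\pi_i)=p$ uniquely up to right association. Since $\pi_1,\pi_2$ are both left divisors of $\delta$ of norm $p$, and $\delta$ itself satisfies the lemma's hypotheses, $\pi_1$ and $\pi_2$ are right associates; after absorbing the connecting unit into $\alpha_2'$, I may assume $\pi_1=\pi_2=:\pi$. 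Cancelling the $\pi\cdots\konj\pi$ sandwich from $\pi X\konj\pi=\pi Y\konj\pi$ is done non-commutatively by multiplying left and right to get $p^2X=p^2Y$. This reduces the problem to the statement for the pure quaternion $\alpha_1'\beta_1\konj{\alpha_1'}=\alpha_2'\beta_2\konj{\alpha_2'}$ of norm $n(m/p)^2$, where the inductive hypothesis applies (its premise verified as in the existence paragraph), yielding $\alpha_1'$ right associated to $\alpha_2'$ and hence $\alpha_1$ right associated to $\alpha_2$.

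The remaining claims are short. With $\alpha_2=\alpha_1 v$ for some unit $v$, substituting and applying the same sandwich-cancellation (now using $\konj{\alpha_1}\alpha_1=m$) immediately yields $\beta_1=v\beta_2\konj v$. When $n=1$, $\beta$ is a pure quaternion of norm $1$, hence lies in $\{\pm i,\pm j,\pm k\}$; the unit group of $\BE$ acts transitively on this six-element set by conjugation ($\sigma$ cyclically permutes $i,j,k$ by Proposition~\ref{sigma_prop}, while conjugation by $j$ sends $i$ to $-i$), so as $\alpha$ ranges over its right-associate class every element of $\{\pm i,\pm j,\pm k\}$ is realized as $\beta$.

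The main obstacle is the non-commutative cancellation bookkeeping: each step of the form $\pi X\konj\pi=\pi Y\konj\pi\Rightarrow X=Y$ needs multiplication on both sides to produce a factor of $p^2$ that is then divided out, and the inductive hypothesis must be rechecked to survive each peel-off. Both aspects rely on the centrality of integer primes in $\BE$, which transfers divisibility by $\BZ$-primes through the conjugation $\pi_0(\cdot)\konj{\pi_0}$.
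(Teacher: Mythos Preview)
Your proposal is correct and follows essentially the same route as the paper's proof: induction on $m$, using Lemma~\ref{3repr_lm} for the existence step and the uniqueness clause of Lemma~\ref{quat_lnko} (applied first to each $\alpha_i$, then to $\delta$) for the uniqueness step, with the $n=1$ case handled via transitivity of the unit-conjugation action on $\{\pm i,\pm j,\pm k\}$. You spell out some details the paper leaves implicit (purity of $\delta_1$, verification that the inductive hypothesis survives peeling off $\pi$, the sandwich cancellation via multiplying by $\konj{\alpha_1}$ and $\alpha_1$), but the skeleton is identical.
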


\begin{proof}
  The existence of $\alpha$ and $\beta$ is easily proven by
  induction on $m$: apply Lemma~\ref{3repr_lm} successively
  for each of the prime divisors of~$m$.

  For the uniqueness assume that
  $\delta=\alpha_1\beta_1\konj{\alpha_1}=
  \alpha_2\beta_2\konj{\alpha_2}$. We use induction on~$m$
  again. If $m=1$, then $\alpha_1$ and $\alpha_2$ are units,
  so they are right associates, and the unit
  $\vep=\alpha_2^{-1}\alpha_1$ satisfies 
  $\vep\beta_1\vep^{-1}=\beta_2$. If $m>1$, then let
  $p\in\BZ$ be a prime divisor of~$m$. Apply
  Lemma~\ref{quat_lnko} to get $\pi_1,\pi_2\in\BE$, with
  $\pi_1\mid\alpha_1$ and $\pi_2\mid\alpha_2$. Then $\pi_1$
  and $\pi_2$ divide $\delta$ on the left, and the
  uniqueness statement of Lemma~\ref{quat_lnko} implies that
  $\pi_1$ and $\pi_2$ are right associates. Thus, if
  $\pi_2=\pi_1\vep$, $\alpha_1=\pi_1\alpha_3$ and
  $\alpha_2=\pi_2\alpha_4$, then
  $\delta'=\alpha_3\beta_1\,\konj{\alpha_3}=
  (\vep\alpha_4)\beta_2\,\konj{\vep\alpha_4}$. By the
  induction hypothesis, $\alpha_3$ and $\vep\alpha_4$ are
  right associates. Similarly, we see that $\alpha_1=\pi_1\alpha_3$ and
  $\alpha_2=\pi_1\vep\alpha_4$ are also right associates.

  If $n=1$, then $\beta$ is a unit in~$\BE$. Since $\beta$
  is a pure quaternion, it is contained in $\{\pm i, \pm j,
  \pm k\}$.  These six elements are group-conjugates of each
  other via a unit by Proposition~\ref{unit_group}. Therefore,
  by taking a right associate of~$\alpha$, we may choose any
  of them to be~$\beta$.
\end{proof}

\begin{rem}\label{3repr_rem}
  Theorem~\ref{3repr} implies Theorem~\ref{icube_exists},
  and the existence statement of
  Theorem~\ref{primitive_unique_big_thm}. (The uniqueness
  part of Theorem~\ref{primitive_unique_big_thm} clearly
  follows from Theorem~\ref{3repr} and
  Corollary~\ref{sarkozy_cor}, but we give a ``pure
  number-theoretic'' proof in Section~\ref{sec_twin}.)
\end{rem}

\begin{proof}
  Let $u$ be a primitive vector and denote by $\delta$ the
  corresponding pure quaternion. Decompose $\delta$ using
  Theorem~\ref{3repr} as $\delta=\alpha\beta\,\konj{\alpha}$
  with $\Norm(\alpha)=m$. Then the cubic lattice
  corresponding to $\alpha$ has edge length $m$ and
  contains~$u$. This yields the existence statement of
  Theorem~\ref{primitive_unique_big_thm}.

  If the length if~$u$ is an integer, then $n=1$ and we may
  assume that $\beta=i$. Then $\alpha j\,\konj{\alpha}$ and
  $\alpha k\,\konj{\alpha}$ extend $u$ to an icube, proving
  Theorem~\ref{icube_exists} in the primitive case. The
  general case obviously follows from this.
\end{proof}

\begin{lm}\label{twin_180rot}
  Let $\beta,\beta_1\in\BE$ be pure quaternions, each of norm
  $n$. If $\beta +\beta_1\ne 0$, then we
  have $(\beta+\beta_1)\beta(\beta+\beta_1)^{-1}=\beta_1$.
\end{lm}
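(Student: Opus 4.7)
The plan is to reduce the claim to the matrix identity $(\beta+\beta_1)\beta=\beta_1(\beta+\beta_1)$, which will follow from the fact that both $\beta$ and $\beta_1$ square to $-n$. Once that identity is in hand, since $\beta+\beta_1\ne 0$ it has an inverse in the division ring $\BH$, and right-multiplication by $(\beta+\beta_1)^{-1}$ yields the stated formula.

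First I would use the assumption that $\beta$ and $\beta_1$ are pure quaternions, which by Definition~\ref{norm,trace} means $\tr(\beta)=\tr(\beta_1)=0$, or equivalently $\konj{\beta}=-\beta$ and $\konj{\beta_1}=-\beta_1$. Then the defining relation $\alpha^2-\tr(\alpha)\alpha+\Norm(\alpha)=0$ from that definition collapses to $\beta^2=-\Norm(\beta)=-n$, and likewise $\beta_1^2=-n$. Thus $\beta^2=\beta_1^2$.

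Next I would simply expand both sides:
\[
(\beta+\beta_1)\beta=\beta^2+\beta_1\beta=-n+\beta_1\beta,
\qquad
\beta_1(\beta+\beta_1)=\beta_1\beta+\beta_1^2=\beta_1\beta-n,
\]
so $(\beta+\beta_1)\beta=\beta_1(\beta+\beta_1)$. Finally, since $\beta+\beta_1\in\BH$ is nonzero by hypothesis, it is invertible in the division ring $\BH$, and multiplying both sides on the right by $(\beta+\beta_1)^{-1}$ gives $(\beta+\beta_1)\beta(\beta+\beta_1)^{-1}=\beta_1$.

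There is no real obstacle here; the only subtlety is that the stated formula uses an inverse inside $\BH$ rather than inside $\BE$ (since $\beta+\beta_1$ is generally not a unit of $\BE$), but this is precisely what makes the conjugation well defined. Geometrically, this is the statement that $\beta+\beta_1$ represents the axis of the half-turn of $\BR^3$ that swaps $\beta$ and $\beta_1$, which is consistent with the lemma's label and with part~(C) of Proposition~\ref{matrix_permute}.
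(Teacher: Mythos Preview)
Your proof is correct and is exactly the ``easy calculation using $\beta^2=\beta_1^2=-n$'' that the paper alludes to but does not write out; the paper then gives the same geometric interpretation (half turn about $\beta+\beta_1$) that you mention at the end. There is nothing to add.
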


\begin{proof}
  The proof is a straightforward calculation using
  $\beta^2=\beta_1^2=-n$. Instead of presenting it, we
  explain this formula geometrically. Since $\gamma =\beta
  +\beta_1$ is a nonzero pure quaternion, conjugation by
  $\gamma$ acts on $\BR^3$ as half turn about the line
  through $\gamma$, which clearly takes $\beta$ to
  $\beta_1$.
\end{proof}

\begin{lm}\label{twin_quad}
  Let $\pi,\beta\in\BE$ such that $\beta$ is a pure
  quaternion and $p=\Norm(\pi)>2$ is a prime in $\BZ$. Then
  $\pi\beta\pi^{-1}\in\BE$ if and only if there exists an
  integer $h\in\BZ$ such that $\konj{\pi}\mid h+\beta$.
\end{lm}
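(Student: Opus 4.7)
For the backward implication, if $h+\beta=\konj{\pi}\mu$ with $\mu\in\BE$, then $\pi(h+\beta)\konj{\pi}=p\mu\konj{\pi}\in p\BE$, and subtracting $hp=\pi h\konj{\pi}$ shows $\pi\beta\konj{\pi}\in p\BE$, whence $\pi\beta\pi^{-1}=\pi\beta\konj{\pi}/p\in\BE$.

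For the forward implication, the condition $\pi\beta\pi^{-1}\in\BE$ is equivalent to $\pi\beta\konj{\pi}\in p\BE=\pi\konj{\pi}\BE$, and cancelling $\pi$ on the left gives $\beta\konj{\pi}\in\konj{\pi}\BE$. I would set up the abelian group $Q=\BE/\konj{\pi}\BE$ together with the well-defined additive endomorphism $\phi\colon Q\to Q$, $[\alpha]\mapsto[\alpha\konj{\pi}]$ (well-defined because $\konj{\pi}\BE$ is a right ideal), so that the hypothesis reads $[\beta]\in\ker\phi$. The task reduces to showing that $\ker\phi$ coincides with the image of $\BZ$ in $Q$: this would provide $h'\in\BZ$ with $\beta-h'\in\konj{\pi}\BE$, and then $h=-h'$ is the desired integer.

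To carry this out, I would first compute $|Q|=p^2$ (right multiplication by $\konj{\pi}$ on the rank-$4$ lattice $\BE$ has absolute determinant $p^2$); note that $p=\konj{\pi}\pi$ annihilates $Q$, so $Q$ is a $\BZ/p\BZ$-vector space of dimension~$2$; and check that the image of $\BZ$ in $Q$ is a $1$-dimensional subspace (using $\BZ\cap\konj{\pi}\BE=p\BZ$, which follows by taking norms: if $k=\konj{\pi}\mu$ with $k\in\BZ$, then $k^2=p\,\Norm(\mu)$, so $p\mid k$). Since $h\konj{\pi}=\konj{\pi}h\in\konj{\pi}\BE$, this subspace lies inside $\ker\phi$, and it suffices to prove $\dim\ker\phi=1$. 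Exploiting the minimal-polynomial identity $\konj{\pi}^2=\tr(\pi)\konj{\pi}-p$, one obtains $\phi^2=t\phi$ with $t=\tr(\pi)\in\BZ$; an elementary case analysis on whether $t\equiv 0\pmod p$, combined with the rank--nullity theorem, then forces $\dim\ker\phi=1$ provided $\phi\ne 0$.

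The main obstacle is thus excluding $\phi\equiv 0$. If $\phi=0$ then $\pi\alpha\konj{\pi}\in p\BE$ for every $\alpha\in\BE$, and taking $\alpha=i,j,k$ each quotient $\pi\alpha\konj{\pi}/p$ would be a pure integral unit quaternion, hence lie in $\{\pm i,\pm j,\pm k\}$. The rotation $R(\pi/\sqrt{p})$ would then permute the six signed coordinate unit vectors, so $\pi/\sqrt{p}$ would belong to the binary octahedral group $G$ of Proposition~\ref{unit_group}. But every non-zero component of an element of $G$ is $\pm 1$, $\pm 1/2$, or $\pm 1/\sqrt{2}$, so every non-zero component of $\pi=\sqrt{p}\cdot g$ is one of $\pm\sqrt{p}$, $\pm\sqrt{p}/2$, $\pm\sqrt{p/2}$; for the odd prime $p>2$ each of these is irrational, contradicting $\pi\in\BE$. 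This rigidity of cube symmetries is ultimately where the hypothesis $p>2$ enters.
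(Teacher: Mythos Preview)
Your argument is correct and takes a genuinely different route from the paper's. The paper proceeds explicitly: setting $\tau=\beta+\pi\beta\pi^{-1}$, it first replaces $\pi$ by $u\pi$ with $u\in\{1,i,j,k\}$ if necessary to arrange $\pi\nmid\tau$, then uses Lemma~\ref{twin_180rot} to see that $\konj{\pi}\tau$ centralizes $\beta$, writes $\konj{\pi}\tau=u+v\beta$ with real $u,v$, and extracts $h$ from a B\'ezout identity after checking $p\nmid 2vd$. Your approach is structural: working in the two-dimensional $\BZ/p\BZ$-space $Q=\BE/\konj{\pi}\BE$, the claim becomes the equality of $\ker\phi$ with the image of~$\BZ$, which reduces to $\phi\ne 0$ and is settled by the rigidity of cube symmetries. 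Two minor remarks: the relation $\phi^2=t\phi$ is in fact unnecessary, since on a two-dimensional space $\phi\ne 0$ already forces $\dim\ker\phi\le 1$ by rank--nullity alone; and the index $p^2$ is the determinant of \emph{left} multiplication by $\konj{\pi}$ (you wrote ``right''), though right multiplication has the same determinant so nothing is lost. The paper's version is constructive and entirely self-contained; yours is cleaner, avoids the ad hoc replacement of~$\pi$, and makes the role of $p>2$ transparent, at the cost of invoking the binary octahedral group via Proposition~\ref{unit_group} and Theorem~\ref{euler_matrix}.
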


\begin{proof}
  Suppose that $\konj{\pi}\mid h+\beta$, that is,
  $\konj{\pi}\tau=h+\beta$, for some $\tau\in\BE$. Then
  \[
  p\tau\konj{\pi}=\pi\konj{\pi}\tau\konj{\pi} =\pi
  h\konj{\pi}+\pi\beta\,\konj{\pi}=
  ph+\pi\beta\,\konj{\pi}\,.
  \]
  Hence, $p\mid
  \pi\beta\,\konj{\pi}$, which shows that
  $\pi\beta\pi^{-1}=(\pi\beta\,\konj{\pi})/p$ is indeed an
  integral quaternion.

  To prove the converse, set $\beta_1=\pi\beta\pi^{-1}$ and
  $\tau=\beta+\beta_1$.  We can assume that $\pi$~does not
  divide $\tau$ on the left, as we now show. Let
\[
\beta_2=(i\pi)\beta (i\pi)^{-1}=i\beta_1 i^{-1}\,,
\]
which is still an integral quaternion. It is clearly
sufficient to prove that $\konj{i\pi}\mid h+\beta$, so we
can work with $i\pi$ instead of $\pi$ in the argument below.
If, however, both $\pi$ and $i\pi$ are ``bad'', that is,
$\pi\mid \tau=\beta+\beta_1$ and 
$i\pi\mid\beta+\beta_2$, then $\pi\mid i^{-1}\beta
i+\beta_1$, which implies $\pi\mid \beta-i^{-1}\beta i$. Put
$\beta=ai+bj+ck$. Then $\beta-i^{-1}\beta i=2(bj+ck)$.  If
$j\pi$ and $k\pi$ are also ``bad'', then $\pi$ divides
$2(ai+ck)$ and $2(ai+bj)$ as well. Taking norms, we get,
using $\Norm(\pi)=p>2$, that $p$ divides $a^2+b^2$,
$a^2+c^2$, and $b^2+c^2$. Thus, $p$ divides $a$,~$b$,~$c$, and, 
finally, $p$~divides~$\beta$. Therefore, $\konj{\pi}\mid
h+\beta$, for $h=0$, and we are done in this case. We
can then indeed assume that $\pi$ does not divide~$\tau$ 
on the left.

Lemma~\ref{twin_180rot} implies that
$\tau\beta\tau^{-1}=\beta_1=\pi\beta\pi^{-1}$, so
$\tau^{-1}\pi$ centralizes $\beta$. Let $d=\Norm(\tau)$.
Then $d\tau^{-1}\pi=\konj{\tau}\pi$ centralizes $\beta$ as
well. The centralizer of $\beta$ consists of elements
$r+s\beta$, where $r,s\in\BR$. This set is closed under
conjugation, since $\beta$ is a pure quaternion, and
therefore, contains $\konj{\konj{\tau}\pi}=\konj{\pi}\tau$.
If we write $\konj{\pi}\tau=u+v\beta$, where $u$ and $v$ are real,
and $\beta=d\beta'$, where $d\in\BZ$ and $\beta'$ is
primitive, then $2u$ and $2vd$ are integers. (We need the
factor $2$, because the integral quaternion~$\konj{\pi}\tau$
need not have integer coefficients).

We now show that $p$ does not divide~$2vd$. Suppose it
does.  Taking norms, we get that $\Norm(\konj{\pi})=p\mid
4\Norm(u+v\beta)=(2u)^2+(2vd)^2\Norm(\beta')$, so $p\mid
2u$. Thus, either $u+v\beta$ has integer coefficients, which
are divisible by $p$, or $2u+2v\beta$ has odd integer
coefficients that are divisible by~$p$. Since $p\ne 2$, we
have $u'+v'\beta=(u+v\beta)/p\in\BE$ in either case. Then
$\konj{\pi}\tau=u+v\beta=
p(u'+v'\beta)=\konj{\pi}\pi(u'+v'\beta)$, and
$\tau=\pi(u'+v'\beta)$, contradicting our assumption that
$\pi$~does not divide $\tau$ on the left. Therefore, we 
have that $p$ does not divide~$2vd$.

Let $x,y$ be integers such that $(2vd)x+yp=1$. Then
\[
\konj{\pi}\mid 2x(u+vd\beta')=x(2u)+(1-yp)\beta'\,.
\]
Since $\konj{\pi}\mid p$, we get that $\konj{\pi}\mid
x(2u)+\beta'$ and take $h=x(2u)d$.
\end{proof}

\begin{thm}\label{twin_quad_pow}
  Suppose that $\alpha,\beta\in\BE$ such that $\beta$ is a
  pure quaternion and $p\in\BZ$ is a prime. Then $p\mid
  \alpha\beta\,\konj{\alpha}$ if and only if one of the
  following cases holds.
\begin{enumerate}
\item[$(1)$] $p$ divides $\alpha$ or $\beta$.
\item[$(2)$] $p=2$ and does not divide $\alpha,\beta$, but
  divides $\Norm(\alpha)$.
\item[$(3)$] $p>2$ and does not divide $\alpha,\beta$, but
  divides $\Norm(\alpha)$, and there exists a right divisor
  $\pi$ of $\alpha$ with norm~$p$ and an integer $h\in\BZ$
  such that $\konj{\pi}\mid h+\beta$.
\end{enumerate}
In particular, every prime divisor
of~$\alpha\beta\,\konj{\alpha}$ divides either $\beta$
or~$\Norm(\alpha)$.
\end{thm}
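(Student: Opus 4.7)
The plan is to split the biconditional and treat the three cases in turn, starting with the easier $(\Leftarrow)$ direction. Case~(1) is immediate, since $p$ factors out of $\alpha\beta\konj{\alpha}$ if it divides $\alpha$ or $\beta$. For case~(3), using the dual of Lemma~\ref{quat_lnko} I would write $\alpha=\alpha'\pi$ with $\Norm(\pi)=p$, so that $\alpha\beta\konj{\alpha}=\alpha'(\pi\beta\konj{\pi})\konj{\alpha'}$; Lemma~\ref{twin_quad} then identifies the condition $\konj{\pi}\mid h+\beta$ with $\pi\beta\pi^{-1}\in\BE$, i.e.\ with $p\mid\pi\beta\konj{\pi}$, so $p\mid\alpha\beta\konj{\alpha}$. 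Case~(2) reduces to a short direct calculation showing $(1+i)\beta\,\konj{(1+i)}\in 2\BE$ for every pure $\beta\in\BE$; since any right divisor of $\alpha$ of norm $2$ is a left associate of $1+i$ by Theorem~\ref{quat_irred}, the same factorization gives $2\mid\alpha\beta\konj{\alpha}$.

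For the $(\Rightarrow)$ direction, assume $p\mid\alpha\beta\konj{\alpha}$ and that we are not in case~(1), so $p\nmid\alpha$ and $p\nmid\beta$. The first step is to deduce $p\mid\Norm(\alpha)$. If not, there exist $a,b\in\BZ$ with $a\Norm(\alpha)+bp=1$, so $a\konj{\alpha}\alpha\equiv 1\pmod p$; multiplying $\alpha\beta\konj{\alpha}$ on the left by $a\konj{\alpha}$ and then on the right by $\alpha a$ telescopes to $\beta\pmod p$, contradicting $p\nmid\beta$.

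Once $p\mid\Norm(\alpha)$ is established, the dual of Lemma~\ref{quat_lnko} produces a right divisor $\pi$ of $\alpha$ with $\Norm(\pi)=p$, characterized as the generator of the left ideal $(\alpha,p)_\ell$. This gives the key Bezout-style identity $\pi=\tau_1\alpha+\tau_2 p$ for some $\tau_1,\tau_2\in\BE$, hence $\konj{\pi}=\konj{\alpha}\konj{\tau_1}+p\konj{\tau_2}$. Expanding $\pi\beta\konj{\pi}$, every cross-term carries an explicit factor of $p$ (using that $p$ is central in $\BE$), leaving $\pi\beta\konj{\pi}\equiv\tau_1(\alpha\beta\konj{\alpha})\konj{\tau_1}\pmod p$. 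Thus $p\mid\alpha\beta\konj{\alpha}$ forces $p\mid\pi\beta\konj{\pi}$. When $p=2$ this is exactly case~(2); when $p>2$, Lemma~\ref{twin_quad} converts $p\mid\pi\beta\konj{\pi}$ (equivalently $\pi\beta\pi^{-1}\in\BE$) into the existence of an integer $h$ with $\konj{\pi}\mid h+\beta$, delivering case~(3).

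The main obstacle I anticipate is precisely the transfer step in the previous paragraph: passing from $p\mid\alpha\beta\konj{\alpha}$ to $p\mid\pi\beta\konj{\pi}$ for a chosen right divisor $\pi$ of norm~$p$. Noncommutativity of $\BE$ forbids naively ``cancelling'' $\alpha'$ from $\alpha\beta\konj{\alpha}=\alpha'(\pi\beta\konj{\pi})\konj{\alpha'}$, and it is the Bezout identity inherent in the dual of Lemma~\ref{quat_lnko} that makes the argument go through cleanly. The final ``in particular'' assertion is then automatic: in case~(1) either $p\mid\beta$ or $p\mid\alpha$ (whence $p\mid\Norm(\alpha)$), and cases~(2) and~(3) explicitly require $p\mid\Norm(\alpha)$.
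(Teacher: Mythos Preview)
Your argument is correct. The $(\Leftarrow)$ direction matches the paper's proof essentially verbatim. For the $(\Rightarrow)$ direction, however, you take a genuinely different and cleaner route. The paper proceeds by induction on $\Norm(\alpha)$: writing $\alpha=\alpha_1\pi$, it assumes $p\nmid\pi\beta\konj{\pi}$, applies the inductive hypothesis to $\alpha_1(\pi\beta\konj{\pi})\konj{\alpha_1}$, and derives a contradiction via a somewhat delicate argument involving norms and the uniqueness part of Lemma~\ref{quat_lnko}. Your approach avoids the induction entirely by exploiting the Bezout identity $\pi=\tau_1\alpha+\tau_2 p$ that comes for free from the dual of Lemma~\ref{quat_lnko} (since $\pi$ generates the left ideal $(\alpha,p)_\ell$); expanding $\pi\beta\konj{\pi}$ immediately gives $\pi\beta\konj{\pi}\equiv\tau_1(\alpha\beta\konj{\alpha})\konj{\tau_1}\pmod p$, and the transfer is done in one line. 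This is more direct and makes the role of the ideal-theoretic characterization of $\pi$ transparent, whereas the paper's inductive argument, while perfectly valid, obscures this. Both approaches ultimately feed into Lemma~\ref{twin_quad} to extract the integer $h$ in case~(3).
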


\begin{proof}
  If $(1)$ holds, then clearly $p\mid
  \alpha\beta\,\konj{\alpha}$.  If $(2)$ holds, then the
  dual of Lemma~\ref{quat_lnko} shows that $\alpha$ is right
  divisible by~$1+i$ (since this is the only element in
  $\BE$ of norm~$2$ up to left association), and
  Proposition~\ref{unit_group} yields that
  $(1+i)\beta\,\konj{1+i}$ is divisible by~$2$. Finally, if
  $(3)$ holds, then $p\mid \pi\beta\,\konj{\pi}$ by
  Lemma~\ref{twin_quad}. This proves one direction of the
  theorem.

  Now assume that $p\mid \alpha\beta\,\konj{\alpha}$ but
  $\alpha$ and $\beta$ are not divisible by~$p$. If $p$ does
  not divide $\Norm(\alpha)$, then we have
  $p\mid\konj{\alpha}(\alpha\beta\,\konj{\alpha})\alpha=
  \Norm(\alpha)^2\beta$. Hence, $p\mid\beta$, which is a
  contradiction. Therefore, we may assume that we are in
  case~$(3)$, that is, $p>2$ and $p\mid\Norm(\alpha)$. We
  proceed by induction on $\Norm(\alpha)$. By the dual of
  Lemma~\ref{quat_lnko}, $\alpha=\alpha_1\pi$ for some
  $\alpha_1,\pi\in\BE$, with $\Norm(\pi)=p$. We show that
  $\beta_1=\pi\beta\,\konj{\pi}$ is divisible by~$p$. Then
  we are clearly done by Lemma~\ref{twin_quad}.

  Suppose $\beta_1$ is not divisible by~$p$. 
  Apply the induction hypothesis to 
  $\alpha_1\beta_1\,\konj{\alpha_1}$ (which is equal to
  $\alpha\beta\,\konj{\alpha}$). We must be in case~$(3)$,
  since $p>2$ and $p$ does not divide both $\alpha_1$ and
  $\beta_1$. Therefore, there exists a right divisor $\pi_1$
  of $\alpha_1$ of norm~$p$ and an integer $h_1$ such that
  $\konj{\pi_1}\mid h_1+\beta_1$.  Taking norms, we see that
  $p=\Norm(\konj{\pi_1})\mid
  \Norm(h_1+\beta)=h_1^2+\Norm(\beta_1)$; however,
  $\Norm(\beta_1)=p^2\Norm(\beta)$ is divisible by~$p$, so
  $p\mid h_1$, and therefore, $\konj{\pi_1}\mid\beta_1$.
  Since $\beta_1$ is not divisible by~$p$, the uniqueness
  part of Lemma~\ref{quat_lnko} shows that $\konj{\pi_1}$
  and $\pi$ are right associates. This implies $\alpha$ is
  divisible on the right by $\pi_1\konj{\pi_1}=p$,
  contradicting our assumptions.
\end{proof}

\begin{prp}\label{twin_no_primitives}
  Suppose that $\beta\in\BE$ is a pure quaternion and $p$ is
  a prime not dividing~$\beta$. Denote by $e$ the number of
  different quaternions of the form $\vep\beta\vep^{-1}$,
  where $\vep$ runs over the units of~$\BE$. Consider all
  quaternions~$\alpha$ whose norm is $p^\ell$, with some
  fixed $\ell>0$. If $p>2$, then the number of quaternions
  of the form $\alpha\beta\,\konj{\alpha}$ that are not
  divisible by~$p$ is
\begin{enumerate}
\item[$(1)$] $ep^\ell$, if $p\mid \Norm(\beta)$;
\item[$(2)$] $e(p^\ell-p^{\ell-1})$, if $-\Norm(\beta)$ is a
  quadratic residue mod~$p$;
\item[$(3)$] $e(p^\ell+p^{\ell-1})$ otherwise.
\end{enumerate}
If $p=2$, then this number is~$0$.
\end{prp}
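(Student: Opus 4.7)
\medskip

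\noindent\textbf{Proof proposal.} The plan is: handle $p=2$ by direct appeal to case~(2) of Theorem~\ref{twin_quad_pow}, and for $p>2$ use a two-step count---first count the $\alpha$'s with $\Norm(\alpha)=p^\ell$ and $p\nmid\alpha\beta\konj{\alpha}$, then divide by the multiplicity of the map $\alpha\mapsto\alpha\beta\konj{\alpha}$ using the uniqueness clause of Theorem~\ref{3repr}. For $p=2$, $\ell>0$ forces $2\mid\Norm(\alpha)$, and case~(2) of Theorem~\ref{twin_quad_pow} then forces $2\mid\alpha\beta\konj{\alpha}$ for every candidate $\alpha$, so the count is~$0$.

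Assume $p>2$. Theorem~\ref{twin_quad_pow} says $p\nmid\alpha\beta\konj{\alpha}$ iff $p\nmid\alpha$ and the unique (up to left association) right divisor $\pi$ of $\alpha$ with $\Norm(\pi)=p$---provided by the dual of Lemma~\ref{quat_lnko}---satisfies $\konj{\pi}\nmid h+\beta$ for every $h\in\BZ$. Call such a left-association class $[\pi]$ \emph{good}; there are $p+1$ classes in total (Theorem~\ref{quat_irred}), of which some number $g$ are good. From Corollary~\ref{quat_int_count}, subtracting the $\alpha$'s of the form $p\alpha_1$, one gets $24p^{\ell-1}(p+1)$ admissible $\alpha$'s altogether; Lemma~\ref{quat_right_count} distributes them uniformly as $24p^{\ell-1}$ per right-divisor class. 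To evaluate~$g$, I take norms in $\konj{\pi}\mid h+\beta$ to obtain $h^2\equiv-\Norm(\beta)\pmod p$, and apply Lemma~\ref{quat_lnko} to $h+\beta$ (which is nonzero and, since $p\nmid\beta$, not divisible by $p$) to see that each residue $h\bmod p$ gives rise to exactly one bad class; distinct residues give distinct bad classes, because $\konj{\pi}\mid h_1-h_2\in\BZ$ forces $p\mid h_1-h_2$. Case~(1) has $p\mid h$ forced by $p\mid\Norm(\beta)$, so the condition collapses to $\konj{\pi}\mid\beta$, giving one bad class and $g=p$; cases~(2) and~(3) yield $g=p-1$ and $g=p+1$ by counting the solutions of the congruence.

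The remaining step is the multiplicity. Applying Theorem~\ref{3repr} with $m=p^\ell$ to $\gamma=\alpha\beta\konj{\alpha}$ (valid since $p\nmid\gamma$), the uniqueness clause says any identity $\alpha_1\beta\konj{\alpha_1}=\alpha_2\beta\konj{\alpha_2}$ forces $\alpha_2=\alpha_1\vep$ with $\vep$ a unit satisfying $\vep\beta\vep^{-1}=\beta$; by orbit-stabilizer on the unit orbit of $\beta$, the centralizer contains $24/e$ units. Hence the number of distinct $\gamma$'s is $g\cdot 24p^{\ell-1}/(24/e)=g\cdot e\cdot p^{\ell-1}$, producing $ep^\ell$, $e(p^\ell-p^{\ell-1})$, and $e(p^\ell+p^{\ell-1})$ in the three cases. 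The main obstacle I foresee is the bookkeeping of left/right associates---in particular, verifying that Lemma~\ref{quat_right_count} really does give the uniform count $24p^{\ell-1}$ per right-divisor class, and that Theorem~\ref{3repr}'s uniqueness descends, once $\beta$ is held fixed, exactly to the centralizer of $\beta$ in the unit group.
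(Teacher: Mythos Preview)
Your proposal is correct and follows essentially the same route as the paper's proof: the $p=2$ case via Theorem~\ref{twin_quad_pow}, then for odd $p$ the count of ``good'' right-divisor classes $[\pi]$ via Lemma~\ref{quat_lnko} and the congruence $h^2\equiv -\Norm(\beta)\pmod p$, the per-class count $24p^{\ell-1}$ from Lemma~\ref{quat_right_count}, and the multiplicity via the uniqueness clause of Theorem~\ref{3repr}. The only cosmetic difference is bookkeeping: the paper counts ``good pairs'' $(\alpha_1,\beta_1)$ with $\beta_1$ ranging over the unit-orbit of $\beta$ and divides by $24$, whereas you fix $\beta$ and divide by the stabilizer size $24/e$ via orbit--stabilizer; these are the same computation.
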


\begin{proof}
  If $p=2$ (and $\ell>0$), then Theorem~\ref{twin_quad_pow}
  shows that $\alpha\beta\,\konj{\alpha}$ is divisible
  by~$p$, so suppose that $p$ is odd. We call a pair
  $(\alpha_1,\beta_1)$ ``good'', if $\alpha_1\in\BE$ with
  $\Norm(\alpha_1)=p^\ell$ and there is a unit $\vep\in\BE$
  such that $\beta_1=\vep^{-1}\beta\vep$, and $p$ does not
  divide $\alpha_1\beta_1\,\konj{\alpha_1}$. By the
  uniqueness part of Theorem~\ref{3repr}, every element
  $\alpha_1\beta_1\,\konj{\alpha_1}$ is given by exactly
  $24$ pairs. Therefore, it is sufficient to count the good
  pairs for any given~$\beta_1$.

  Let $(\alpha_1,\beta_1)$ be a good pair. Then clearly
  $\alpha_1$ is not divisible by~$p$, so we can write
  $\alpha_1=\alpha_2\pi_2$ by Lemma~\ref{quat_lnko}, where
  $\pi_2$ of norm~$p$ is uniquely determined up to left
  association. Theorem~\ref{twin_quad_pow} shows that
  $\alpha_1\beta_1\konj{\alpha_1}$ is divisible by~$p$ if
  and only if $\konj{\pi_2}$ divides $h+\beta_1$, for some
  integer~$h$ (assuming that $\alpha_1$ is not divisible
  by~$p$). We now count the number of such
  quaternions~$\pi_2$.

  Clearly, $\konj{\pi_2}\mid h+\beta_1$ implies
  $\Norm(\pi_2)=p\mid \Norm(h+\beta_1)=h^2+\Norm(\beta_1)$.
  This means that either $p\mid \Norm(\beta_1)$ or
  $-\Norm(\beta_1)$ is a quadratic residue mod~$p$. Hence, in
  case~$(3)$ above there is no such~$\pi_2$.  Since $p$ does
  not divide~$\beta_1$, it does not divide $h+\beta_1$.
  Thus, by the uniqueness statement of Lemma~\ref{quat_lnko},
  there is exactly one left divisor $\konj{\pi_2}$ up to
  right association with norm~$p$ of any given $h+\beta_1$
  for which $p \mid h^2+N(\beta_1)$, and $\pi_2$ is unique
  up to left association. Clearly, the numbers $h_1$ and
  $h_2$ yield the same~$\konj{\pi_2}$ if and only if
  $h_1\equiv h_2~(p)$. If $p\mid \Norm(\beta_1)$, then $h=0$
  is the only possibility. This yields one ``bad'' value for
  $\pi_2$ up to left association. Otherwise, there are
  exactly two values $1\le h\le p-1$ such that $p\mid
  h^2+\Norm(\beta_1)$ (since $p$~is an odd prime, assuming,
  of course, that $-\Norm(\beta_1)$ is a quadratic residue
  mod~$p$). So, in this case there are two ``bad'' values for
  $\pi_2$ up to left association.

  By Theorem~\ref{quat_irred}, the number of possible choices
  for $\pi_2$ is $p+1$ up to left association. Thus, for
  every given $\beta_1$, the number of good values for
  $\pi_2$ is $p$, $p-1$ and $p+1$, respectively,
  corresponding to cases $(1)$, $(2)$ and $(3)$ in the
  claim. If $\pi_2$ is fixed, then the number of choices for
  $\alpha_2$ so that $\alpha_1$ is not divisible by~$p$ is,
  by Lemma~\ref{quat_right_count}, $24p^{\ell-1}$. Since the
  number of possible $\beta_1$ is~$e$, we get the result.
\end{proof}

In the theorem below, $(-n/p)$ denotes the Legendre symbol
(which is defined to be~$0$ if $p\mid n$).

\begin{thm}\label{s_count}
  Suppose that $m,n\ge 1$ are integers and $n$ is
  square-free. If $m$ is odd, then the number $p(nm^2)$ of
  primitive vectors $(x,y,z)$ whose norm is $nm^2$ is
  \[
  p(nm^2)=p(n)\prod \Big(p^\ell-(-n/p)p^{\ell-1}\Big)\,,
  \]
  where $p^\ell$ runs over the prime powers in the canonical
  form of~$m$. If $m$ is even, then $p(nm^2)=0$.
\end{thm}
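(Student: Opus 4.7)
The plan is to separate the easy case $m$ even from the main case $m$ odd, in which Theorem~\ref{3repr} combined with Proposition~\ref{twin_no_primitives} applied multiplicatively across the primes of $m$ does the counting.

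If $m$ is even, then $nm^2\equiv 0\pmod 4$; since squares are $0$ or $1$ modulo $4$, a sum of three squares that is $0$ modulo $4$ must have all three summands even. Hence no primitive triple $(x,y,z)$ has norm $nm^2$, and $p(nm^2)=0$.

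Suppose now that $m$ is odd. Identify a primitive vector $(x,y,z)$ with the primitive pure quaternion $\delta\in\BE$ of norm $nm^2$. By Theorem~\ref{3repr}, every such $\delta$ can be written as $\alpha\beta\konj{\alpha}$ with $\Norm(\alpha)=m$ and pure $\beta$ of norm $n$, and the pair $(\alpha,\beta)$ is unique up to $(\alpha,\beta)\sim(\alpha\vep,\vep^{-1}\beta\vep)$; thus each primitive $\delta$ is represented by exactly $24$ such ordered pairs. Consequently $24\,p(nm^2)$ equals the number of pairs $(\alpha,\beta)$ with $\alpha\beta\konj{\alpha}$ primitive. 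Since $n$ is squarefree, $\beta$ itself is primitive, so by the final clause of Theorem~\ref{twin_quad_pow} every prime dividing $\alpha\beta\konj{\alpha}$ must divide $m$.

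Fix a pure $\beta$ of norm $n$ and let $F(m,\beta)$ be the number of $\alpha$ with $\Norm(\alpha)=m$ for which $\alpha\beta\konj{\alpha}$ is primitive. For a single prime power $m=p^\ell$, Proposition~\ref{twin_no_primitives} counts the values of $\alpha\beta\konj{\alpha}$; passing from $\delta$-counts to $\alpha$-counts multiplies by $24/e$, where $e$ is the size of the unit-conjugation orbit of $\beta$, and the three cases $(-n/p)\in\{0,1,-1\}$ collapse to the single formula $F(p^\ell,\beta)=24\bigl(p^\ell-(-n/p)p^{\ell-1}\bigr)$, independent of $\beta$. For general odd $m$ and a prime power $p^\ell$ exactly dividing $m$, factor $\alpha=\alpha'\alpha_p$ with $\Norm(\alpha_p)=p^\ell$ and $\Norm(\alpha')=m/p^\ell$ via iterated use of the dual of Lemma~\ref{quat_lnko}; the pair $(\alpha',\alpha_p)$ is well-defined up to $(\alpha',\alpha_p)\sim(\alpha'\vep^{-1},\vep\alpha_p)$ for a unit $\vep$. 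Since $\gcd(\Norm(\alpha'),p)=1$, divisibility by $p$ of $\alpha\beta\konj{\alpha}=\alpha'\delta_p\konj{\alpha'}$, where $\delta_p:=\alpha_p\beta\konj{\alpha_p}$, is equivalent to $p\mid\delta_p$; for a prime $q\mid m/p^\ell$, divisibility by $q$ is controlled by Proposition~\ref{twin_no_primitives} applied to $\delta_p$ in place of $\beta$, with the crucial observation $(-\Norm(\delta_p)/q)=(-p^{2\ell}n/q)=(-n/q)$ producing the same local factor. The local counts thus decouple, and induction on the number of prime divisors of $m$ gives $F(m,\beta)=24\prod\bigl(p^\ell-(-n/p)p^{\ell-1}\bigr)$, where the product runs over prime powers in the canonical form of $m$. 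Summing over the $p(n)$ choices of $\beta$ and dividing by $24$ yields the asserted formula; the main technical obstacle is establishing the factorization $\alpha=\alpha'\alpha_p$ with the correct uniqueness up to a single unit, together with the careful $24$-fold bookkeeping in the induction that passes between counts of single $\alpha$'s and counts of pairs $(\alpha',\alpha_p)$.
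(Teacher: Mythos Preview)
Your approach is essentially the paper's---induction on the number of prime divisors of $m$, with Proposition~\ref{twin_no_primitives} supplying each local factor---but you take an unnecessary detour that creates the very ``technical obstacle'' you flag at the end. The paper applies Theorem~\ref{3repr} at each inductive step with $\Norm(\alpha)=p^\ell$ (a single prime power), so that the $\beta$ in that theorem has norm $nm_1^2$; the inductive hypothesis is then simply the formula for $p(nm_1^2)$, and Proposition~\ref{twin_no_primitives} (with the observation $(-nm_1^2/p)=(-n/p)$) gives the extra factor. No factorization of a quaternion of composite norm is needed.

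By contrast, you fix $\Norm(\beta)=n$ throughout and instead factor $\alpha$ (with $\Norm(\alpha)=m$) as $\alpha'\alpha_p$. This forces you to prove that the right divisor of $\alpha$ of norm $p^\ell$ is unique up to a single left unit---true, but it requires an iterated application of the dual of Lemma~\ref{quat_lnko} together with a check that any such right divisor arises this way. It also means your inductive hypothesis, stated only for $\beta$ of norm $n$, does not literally apply to $\delta_p=\alpha_p\beta\,\konj{\alpha_p}$, which has norm $np^{2\ell}$; you would need to restate the induction for arbitrary primitive $\beta'$ and then invoke $(-\Norm(\delta_p)/q)=(-n/q)$. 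All of this is avoidable: your pair $(\alpha',\delta_p)$ is exactly the decomposition Theorem~\ref{3repr} already hands you when you ask for $\Norm(\alpha)=p^\ell$, so the factorization step just reconstructs what the theorem provides for free.
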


\begin{proof}
  We proceed by induction on the number of prime divisors of~$m$.
  Suppose that $m=p^\ell m_1$, where $p$ does not divide~$m_1$.
  Theorem~\ref{3repr} implies that the pure quaternion
  $\delta = xi+yj+zk$ corresponding to $(x,y,z)$ can be
  represented as $\alpha\beta\,\konj{\alpha}$, where
  $\Norm(\alpha)=p^\ell$.

  Theorem~\ref{twin_quad_pow} shows that if $\beta$ is
  primitive, then the only possible prime divisor of
  $\alpha\beta\,\konj{\alpha}$ is~$p$, and if $p=2$ and
  $\ell>0$, then $\alpha\beta\,\konj{\alpha}$ is not
  primitive, because it is divisible by~$2$. Thus, if $p>2$,
  then $\alpha\beta\,\konj{\alpha}$ is primitive if and only
  if $\beta$ is primitive and $\alpha\beta\,\konj{\alpha}$
  is not divisible by~$p$. The formula for $p(nm^2)$ then
  follows clearly from Proposition~\ref{twin_no_primitives}.
\end{proof}

The following is a well-known formula (see (29) in
\cite{Pal40}), and follows with some effort from
Theorem~\ref{s_count}.

\begin{cor}\label{s_count_cor}
  Suppose that $m,n\ge 1$ are integers and $n$ is
  square-free. The number $s(nm^2)$ of all vectors of norm
  $nm^2$ is
  \[
  s(nm^2)=s(n)\prod
  \Big(\sigma(p^\ell)-(-n/p)\sigma(p^{\ell-1})\Big)\,,
  \]
  where $p^\ell$ runs over the odd prime powers in the
  canonical form of~$m$ and $\sigma(s)$ denotes the sum of
  positive divisors of any integer~$s$.
\end{cor}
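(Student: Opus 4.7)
The plan is to reduce the counting of all vectors of norm $nm^2$ to the counting of primitive vectors (which is given by Theorem~\ref{s_count}) by stripping off the greatest common divisor. Every non-zero integer vector $v$ with norm $N$ can be written uniquely as $v=g v'$ where $g=\gcd$ of the components of $v$ and $v'$ is primitive of norm $N/g^2$. Therefore
\[
s(N)=\sum_{g^2\mid N} p(N/g^2).
\]
When $N=nm^2$ with $n$ squarefree, the condition $g^2\mid nm^2$ is equivalent to $g\mid m$. Setting $d=m/g$ and observing that the squarefree part of $nd^2$ is again $n$ (because $n$ is squarefree and $d^2$ is a square), Theorem~\ref{s_count} applies directly: $p(nd^2)=0$ when $d$ is even, and
\[
p(nd^2)=p(n)\prod_{p^\ell\|d}\bigl(p^\ell-(-n/p)p^{\ell-1}\bigr)
\]
when $d$ is odd.

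Next I would restrict the summation to odd $d$. Writing $m=2^a m_1$ with $m_1$ odd, the odd divisors of $m$ are exactly the divisors of $m_1$, so
\[
s(nm^2)=p(n)\sum_{d\mid m_1}\prod_{p^\ell\|d}\bigl(p^\ell-(-n/p)p^{\ell-1}\bigr).
\]
The function $d\mapsto \prod_{p^\ell\|d}(p^\ell-(-n/p)p^{\ell-1})$ is multiplicative (with value $1$ at $d=1$), so the divisor sum factors over primes dividing $m_1$:
\[
\sum_{d\mid m_1}\prod_{p^\ell\|d}\bigl(p^\ell-(-n/p)p^{\ell-1}\bigr)
=\prod_{p^\ell\|m_1}\Bigl(1+\sum_{k=1}^{\ell}\bigl(p^k-(-n/p)p^{k-1}\bigr)\Bigr).
\]
A short computation collapses the inner sum: the unsigned part telescopes to $1+p+\cdots+p^\ell=\sigma(p^\ell)$ and the Legendre factor collects $1+p+\cdots+p^{\ell-1}=\sigma(p^{\ell-1})$, giving $\sigma(p^\ell)-(-n/p)\sigma(p^{\ell-1})$.

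Finally, I would note that $p(n)=s(n)$ when $n$ is squarefree: if $v$ has norm $n$ and $g=\gcd$ of its components, then $g^2\mid n$ forces $g=1$. Substituting $s(n)$ for $p(n)$ and observing that the primes $p$ appearing in the product are exactly the odd prime divisors of $m$ (the even part $2^a$ has been absorbed into the summation over $g$ and contributes nothing to $s(nm^2)$) yields the formula claimed in Corollary~\ref{s_count_cor}. There is no real obstacle beyond bookkeeping; the only mild subtlety is recognizing that the squarefree part of $nd^2$ remains $n$ regardless of $\gcd(n,d)$, so Theorem~\ref{s_count} can be applied with the same $n$ throughout.
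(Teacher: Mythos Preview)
Your proof is correct and is exactly the natural derivation the paper has in mind: the paper does not give a proof of this corollary at all, merely remarking that ``it follows with some effort from Theorem~\ref{s_count}''. Your argument (decompose $s$ as a divisor sum of $p$, observe that the squarefree part of $nd^2$ is still $n$, discard even $d$, then factor the multiplicative sum and telescope) is precisely that effort carried out, so there is nothing to compare.
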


\section{A parameterization of twin pairs}
\label{sec_twin}

\noindent
Theorem \ref{repr_twins} is our main characterization of
twins. In Theorem \ref{count_twins}, we count twin pairs with
a given norm. Finally, we deal with the problem of extension.
In Corollary~\ref{twin_to_icube}, we show that each pair of
twins whose length is an integer extends to an icube. Then,
at the end of the section, we prove
Theorem~\ref{primitive_unique_big_thm} and
Corollary~\ref{primitive_unique_big_cor}.

Recall that every vector $v=(v_1,v_2,v_3)\in\BR^3$ is
identified with the pure quaternion $V(v)=v_1 i+v_2 j+ v_3
k$.

\begin{prp}\label{twin_quat}
  Two vectors $v$ and $w$ in $\BZ^3$ are twins if and only
  if $\theta=V(v)$ and $\eta=V(w)$ satisfy the following
  conditions.
\begin{enumerate}
\item[$(1)$] $\theta$ and $\eta$ are nonzero pure
  quaternions;
\item[$(2)$] $\Norm(\theta)=\Norm(\eta)$;
\item[$(3)$] $\theta\eta$ is also a pure quaternion;
\item[$(4)$] $\theta$ and $\eta$ have integer
  coefficients.
\end{enumerate}
We call a pair of such quaternions $(\theta,\eta)$ a
\emph{twin pair}.
\end{prp}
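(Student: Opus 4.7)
The proposition is essentially a dictionary between the geometric notion of ``twin'' and four algebraic conditions on the associated pure quaternions, so the plan is just to verify each correspondence, with the only nontrivial ingredient being the identification of the real part of a product of two pure quaternions with the (negative of the) inner product of the corresponding vectors.

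First, I would observe that the map $v \mapsto V(v) = v_1 i + v_2 j + v_3 k$ sets up a bijection between $\BR^3$ and the $\BR$-subspace of pure quaternions. Under this bijection, $v \in \BZ^3$ corresponds exactly to a pure quaternion with integer coefficients, and $v \ne 0$ corresponds to $V(v) \ne 0$. This handles condition~(1) together with the integrality condition~(4). Next, since
\[
\Norm(V(v)) = v_1^2 + v_2^2 + v_3^2 = \norm{v}^2,
\]
the equality $\Norm(\theta) = \Norm(\eta)$ is the same as $\norm{v} = \norm{w}$, which takes care of~(2).

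The only thing left is to show that condition~(3) — namely that $\theta\eta$ is a pure quaternion — is equivalent to $v$ and $w$ being orthogonal. For this I would carry out the direct expansion of $\theta\eta$ using the multiplication rules $i^2=j^2=k^2=-1$ and $ij=k$ etc. A short computation gives
\[
\theta\eta \;=\; -(v_1 w_1 + v_2 w_2 + v_3 w_3) \;+\; V(v \times w),
\]
so the real part of $\theta\eta$ is exactly $-\langle v, w\rangle$. Hence $\theta\eta$ is pure if and only if $v \perp w$. (Equivalently one can use $2\,\re(\theta\eta) = \theta\eta + \konj{\eta\theta}$ together with $\konj{\theta}=-\theta$, $\konj{\eta}=-\eta$ to get $\re(\theta\eta) = -\frac{1}{2}(\theta\konj{\eta} + \eta\konj{\theta})$, which is the negative of the standard inner product on pure quaternions.)

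Putting all four pieces together gives the biconditional. The argument is almost entirely bookkeeping; the only ``content'' is the single quaternion product computation, so no real obstacle is expected.
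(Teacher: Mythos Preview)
Your proof is correct and matches the paper's approach exactly: the paper's proof consists of the single observation that the real part of $V(v)V(w)$ is the negative of the dot product (and the pure part is the cross product), which is precisely the computation you carry out.
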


\begin{proof}
  It is easy to verify that the real part of
  $V(v)V(w)$ equals the negative of the
  dot product of $v$ and $w$, and the pure
  quaternion part of $V(v)V(w)$ corresponds to
  the cross product of $v$ and $w$.
\end{proof}

We now translate the construction of twin pairs given in
the Introduction. Denote by $(u,v,w)$ the columns of an
Euler matrix given by $\alpha\in\BE$. By
Proposition~\ref{integer_quat_elementary}, $\Norm(\alpha)$
and the components of $(u,v,w)$ are integers.  Let
$z=a+bi\in\BG$ (the ring of Gaussian integers). Then
$E(\alpha)$ maps $i$ to $V(u)$, $j$ to $V(v)$, and $k$ to
$V(w)$. It maps the twin quaternions $zj=aj+bk$ and
$zk=ak-bj$ to $\alpha zj\,\konj{\alpha}$ and $\alpha
zk\,\konj{\alpha}$, respectively, which correspond
to the twin pair $(av+bw, -bv+aw)$.

\begin{df}\label{eq_for_pairs}
  We say that $(\theta,\eta)$ is \emph{parameterized} by the
  pair $(\alpha,z)\in\BH\times\BC$ if $\theta=\alpha z
  j\,\konj{\alpha}$ and $\eta=\alpha z k\,\konj{\alpha}$.
  Two pairs in $(\alpha_1,z_1)\in\BH\times\BC$ and
  $(\alpha_2,z_2)\in\BH\times\BC$ are \emph{equivalent} if
  they parameterize the same $(\theta,\eta)$, that is, if
  $\alpha_1 z_1 j\,\konj{\alpha_1}=\alpha_2 z_2
  j\,\konj{\alpha_2}$ and $\alpha_1 z_1
  k\,\konj{\alpha_1}=\alpha_2 z_2 k\,\konj{\alpha_2}$.
\end{df}

\begin{prp}\label{parametr=>twins}
  Suppose that $(\theta,\eta)$ is parameterized by a pair
  $(\alpha,z)\in\BH\times\BC$. Then
  $\theta\eta=\Norm(\alpha)\Norm(z)\alpha i\,\konj{\alpha}$,
  so $\theta$ and $\eta$ satisfy $(1)$--$(3)$ of
  Proposition~\ref{twin_quat}. If $\alpha\in\BE$
  and $z\in\BG$, then $\theta$ and $\eta$ have integer
  coefficients and are~twins.
\end{prp}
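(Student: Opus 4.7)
The plan is a direct quaternion calculation, the crux of which is the identity $(zj)(zk) = N(z)\,i$ for $z \in \BC = \BR + \BR i$. Everything else will follow by substituting this into $\theta\eta$, invoking multiplicativity of the norm, and using the fact that conjugation $\beta \mapsto \alpha\beta\konj{\alpha}$ preserves the space of pure quaternions (which is part of \thref{euler_matrix}).

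First I would verify the key identity. Writing $z=a+bi$ with $a,b\in\BR$, I would compute directly in $\BH$ using $ij=k$, $ji=-k$, $jk=i$, $kj=-i$, $ki=j$, $ik=-j$:
\begin{equation*}
zj = aj + bij = aj + bk,\qquad zk = ak + bik = ak - bj.
\end{equation*}
Note, as a by-product, that both $zj$ and $zk$ are pure quaternions. Multiplying:
\begin{equation*}
(zj)(zk) = (aj+bk)(ak-bj) = a^2(jk) - ab(j^2) + ab(k^2) - b^2(kj) = (a^2+b^2)\,i = N(z)\,i.
\end{equation*}

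Next I would combine this with $\konj{\alpha}\alpha = N(\alpha)$, a central real scalar, to compute $\theta\eta$:
\begin{equation*}
\theta\eta = \alpha(zj)\konj{\alpha}\alpha(zk)\konj{\alpha} = N(\alpha)\,\alpha\,(zj)(zk)\,\konj{\alpha} = N(\alpha)N(z)\,\alpha i\konj{\alpha},
\end{equation*}
which is the asserted formula. Since $i$ is pure, \thref{euler_matrix} gives that $\alpha i\konj{\alpha}$ is pure, and therefore so is $\theta\eta$, which is condition $(3)$ of \ref{twin_quat}. For condition $(2)$, multiplicativity of the norm yields $N(\theta) = N(\alpha)^2 N(zj) = N(\alpha)^2 N(z) = N(\eta)$. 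For condition $(1)$, the same norm calculation shows $\theta$ and $\eta$ are nonzero as soon as $\alpha$ and $z$ are nonzero (the case $\alpha=0$ or $z=0$ is trivially excluded since $(\theta,\eta)$ is supposed to be parameterized by it and still be a twin pair). To see that $\theta$ and $\eta$ themselves are pure, I would again apply \thref{euler_matrix} to the pure quaternions $zj$ and $zk$ obtained in the first step.

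For the final claim, assume $\alpha \in \BE$ and $z=a+bi\in\BG$, so $a,b\in\BZ$. Then $zj = aj+bk$ is a Lipschitz integral quaternion, hence lies in $\BE$; similarly for $zk$. Since $\BE$ is closed under multiplication, $\theta = \alpha(zj)\konj{\alpha}\in\BE$ and $\eta = \alpha(zk)\konj{\alpha}\in\BE$. Both are pure, so by \ref{integer_quat_elementary} they have integer coefficients. Together with the previously verified properties $(1)$--$(3)$, \ref{twin_quat} then identifies $(\theta,\eta)$ as a twin pair. There is no real obstacle in this proof; the only point requiring any care is keeping track of the non-commutativity when expanding $(zj)(zk)$, which is why I would carry out that expansion explicitly as the first step.
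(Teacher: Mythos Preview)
Your proof is correct and follows essentially the same approach as the paper: a direct quaternion computation of $\theta\eta$ reducing to the identity $(zj)(zk)=\Norm(z)\,i$. The only cosmetic difference is that the paper obtains this identity via the conjugation trick $zjzk = z(jzj^{-1})jk = z\konj{z}\,i$, whereas you expand in coordinates; both are straightforward and your treatment of conditions $(1)$--$(3)$ and of the integer-coefficient claim is somewhat more explicit than the paper's terse proof.
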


\begin{proof}
  Note that $\alpha^{-1}=\konj{\alpha}/\Norm(\alpha)$ and
  $x\mapsto\alpha x\alpha^{-1}$ is an automorphism of the
  division ring~$\BH$. Therefore,
  $\theta\eta=\Norm(\alpha)\alpha zjzk\,\konj{\alpha}$; however,
  $zjzk=zjzj^{-1}jk=z\konj{z}i$, so the quaternion
  $\theta\eta=\Norm(\alpha)\Norm(z)\alpha i\,\konj{\alpha}$
  is pure.
\end{proof}

\begin{thm}\label{repr_twins}
The characterization of twin quaternions is given by the following.
\begin{enumerate}
\item[$(1)$] The quaternions $\theta$ and $\eta$ are twins
  if and only if $(\theta,\eta)$ is parameterized by a pair
  in $\BE\times\BG$ (whose components are nonzero).
\item[$(2)$] Every pair in $\BE\times\BG$ is equivalent to a
  pair, where the second component is \emph{square-free} in
  $\BG$.
\item[$(3)$] Let $(\alpha_1,z_1), (\alpha_2,z_2)\in
  \BE\times\BG$ be such that both $z_1$ and~$z_2$ are
  square-free. Then these pairs are equivalent if and only
  if there exists a unit $\rho\in\BG$ (that is, an element
  of $\{\pm 1,\pm i\}$) such that $\alpha_2=\alpha_1\rho$
  and $z_2=\rho^2z_1$.
\item[$(4)$] The length of the twins $\theta$ and $\eta$ is
  an integer if and only if in the parameterization $(\alpha,
  z)$ of $(\theta,\eta)$, where $z$ is square-free, $z\in\BG$ 
  is either real or pure imaginary.
\end{enumerate}
\end{thm}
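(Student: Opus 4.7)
The plan addresses the four parts in order: part~(1) is the substantive existence result, and parts~(2)--(4) follow relatively directly once~(1) is available.

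For part~(1), one direction is Proposition~\ref{parametr=>twins}, with nonzero $\alpha,z$ forcing $\theta,\eta$ nonzero. For the reverse, let $\delta=\theta\eta$, a pure Lipschitz integer quaternion of norm $N^2$, where $N=\Norm(\theta)=\Norm(\eta)$. Let $g$ be the greatest common divisor of the integer components of $\delta$, so $\delta_0=\delta/g$ is primitive of norm $(N/g)^2$. Apply Theorem~\ref{3repr} to $\delta_0$ with $n=1$ and $m=N/g$; using the freedom to set $\beta=i$, this yields $\alpha\in\BE$ with $\Norm(\alpha)=N/g$ and $\delta_0=\alpha i\konj{\alpha}$. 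Both $\theta$ and $\eta$ are perpendicular to $\delta$, hence lie in the plane spanned by $v=\alpha j\konj{\alpha}$ and $w=\alpha k\konj{\alpha}$. Writing $\theta=Av+Bw$ and $\eta=Cv+Dw$ with real coefficients, the twin conditions $\theta\cdot\eta=0$ and $|\theta|=|\eta|$ force $(C,D)=\pm(-B,A)$; flipping the sign of $\eta$ if needed and setting $z=A+Bi$ produces $\theta=\alpha zj\konj{\alpha}$, $\eta=\alpha zk\konj{\alpha}$. The main obstacle is showing $A,B\in\BZ$, equivalently that $\konj{\alpha}\theta\alpha$ is divisible by $\Norm(\alpha)^2$ in~$\BE$; I expect to verify this prime-by-prime, using Theorem~\ref{twin_quad_pow} applied to a right divisor $\pi$ of $\alpha$ of norm $p$ (for each $p\mid\Norm(\alpha)$), combined with the fact that $p\mid\theta\eta=g\alpha i\konj{\alpha}$.

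For part~(2), if a Gaussian prime $\pi$ divides $z$ with multiplicity at least two, a direct computation (using that $\pi\in\BG$ commutes with elements of~$\BG$) gives $\pi j\konj{\pi}=\pi^2 j$ and $\pi k\konj{\pi}=\pi^2 k$; consequently $(\alpha\pi,\,z/\pi^2)$ parameterizes the same $(\theta,\eta)$ as $(\alpha,z)$, and iteration reaches a squarefree second component. For part~(3), set $\mu=\alpha_2^{-1}\alpha_1\in\BH$. The equivalence becomes $\mu z_1 j\konj{\mu}=z_2 j$ and $\mu z_1 k\konj{\mu}=z_2 k$; multiplying these two relations and using $z_1 j\,z_1 k=\Norm(z_1)\,i$ yields $\Norm(\mu)\Norm(z_1)\,\mu i\konj{\mu}=\Norm(z_2)\,i$, whence $R(\mu)$ fixes the $i$-axis and so $\mu\in\BR+\BR i$. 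Commuting $\mu$ past $z_1\in\BG$ and using the identity $\mu j\konj{\mu}=\mu^2 j$ (valid for $\mu\in\BR+\BR i$) reduces the first relation to $z_2=\mu^2 z_1$. Writing $\mu^2=p/q$ in lowest terms in~$\BG$, one deduces $q\mid z_1$, $p\mid z_2$, and squarefreeness of $z_1,z_2$ forces $p$ and $q$ to be units; since $\pm i$ are not squares in $\BQ(i)$, the only possibilities are $\mu^2=\pm 1$, hence $\mu\in\{\pm 1,\pm i\}$. Setting $\rho=\mu^{-1}$ and using $\rho^{-2}=\rho^2$ for fourth roots of unity yields the stated form.

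Part~(4) follows immediately from $\Norm(\theta)=\Norm(\alpha)^2\Norm(z)$: the length $\sqrt{\Norm(\theta)}$ is an integer iff $\Norm(z)$ is a perfect square. For squarefree $z\in\BG$, unique factorization shows $\Norm(z)$ is a perfect square iff $z$ has no factor of the ramified prime $1+i$ and every split prime $p\equiv 1\pmod 4$ occurring in~$z$ contributes both conjugate Gaussian primes $\pi$ and $\konj{\pi}$; equivalently, $z$ is a Gaussian unit times a rational odd integer, that is, real or pure imaginary.
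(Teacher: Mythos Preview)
Your treatment of parts~(2)--(4) is sound and close to the paper's. The real issue is part~(1), specifically the step you yourself flag as ``the main obstacle'': showing that the coordinates $A,B$ of $\theta$ in the basis $\alpha j\konj{\alpha}$, $\alpha k\konj{\alpha}$ are integers. Your sketch (``verify this prime-by-prime using Theorem~\ref{twin_quad_pow} and the fact that $p\mid\theta\eta$'') does not go through as stated. Theorem~\ref{twin_quad_pow} tells you when a prime divides $\alpha\beta\konj{\alpha}$ for $\beta\in\BE$; it does not tell you that an arbitrary pure integral $\theta$, known only to be orthogonal to $\alpha i\konj{\alpha}$, must lie in the lattice $\{\alpha\beta\konj{\alpha}:\beta\text{ pure integral}\}$. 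Knowing $p\mid\theta\eta$ gives divisibility information about the product, not about $\theta$ individually relative to the Euler lattice of your~$\alpha$ (which was built from $\delta_0$, not from~$\theta$). To promote this to $\theta\in C_\alpha$ you would need, for each prime $p\mid\Norm(\alpha)$, that the \emph{specific} irreducible $\pi$ occurring as a right factor of $\alpha$ left-divides~$\theta$, and your hypothesis only gives that \emph{some} $\pi$ of norm~$p$ does.

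The paper avoids this difficulty by never constructing $\alpha$ from $\delta_0$. Instead, Proposition~\ref{repr_exists} builds $\alpha$ inductively so that $\theta$ and $\eta$ stay inside $C_\alpha$ at every step: one considers representations $\theta=\alpha\theta_1\konj{\alpha}$, $\eta=\alpha\eta_1\konj{\alpha}$ with $\Norm(\alpha)$ maximal, and Lemmas~\ref{twin_common_conj} and~\ref{twin_common_conj1} supply a common $\pi$ (of prime norm) that can be pulled out of both $\theta_1$ and $\eta_1$ simultaneously whenever their common norm still has a square factor not absorbed by an integer scalar. Once no further reduction is possible, $\theta_1=d\theta_2$, $\eta_1=d\eta_2$ with $\Norm(\theta_2)$ squarefree; Lemma~\ref{twin_div_p} then forces $\theta_2\eta_2$ to be that squarefree norm times a unit (which may be taken to be~$i$), and one reads off~$z$. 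The simultaneous-reduction lemmas do precisely the work that your unproven integrality claim would have to do.

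One minor remark on part~(3): from $q\mid z_1$ with $z_1$ squarefree you only get that $q$ is squarefree, not that $q$ is a unit. To finish you must also use that $p/q=\mu^2$ is a square in $\BQ(i)$, so that in lowest terms $p$ and $q$ are each associate to squares in~$\BG$; then squarefree forces unit. With that addition your argument for~(3) is complete. (The paper's argument for~(3) is organized slightly differently, passing through square roots $s_r$ of $z_r$ and Lemma~\ref{int_closed}, but is equivalent in substance.)
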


The condition that $z$ is square-free expresses the fact that
the cubic lattice given by $\alpha$ is as large as possible.
We prove this theorem through a series of assertions.

\begin{lm}\label{twin_div_p}
  Suppose that $\theta,\eta\in\BE$ is such that $\theta$,
  $\eta$ and $\theta\eta$ are pure quaternions. Let
  $p\in\BZ$ be a prime such that $p$ divides $\Norm(\theta)$
  and $\Norm(\eta)$. Then $p\mid \theta\eta$.
\end{lm}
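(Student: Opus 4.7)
The plan is a proof by contradiction. If $p$ divides $\theta$ or $\eta$ the conclusion is immediate, so assume neither; assume further, toward a contradiction, that $p$ does not divide $\theta\eta$.

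By \lmref{quat_lnko} applied separately to $\theta$ and to $\eta$, we can write $\theta=\pi\theta_1$ and $\eta=\pi_2\eta_1$ with $\pi,\pi_2\in\BE$ of norm~$p$, each unique up to right association. Purity of $\eta$ gives $\eta=-\konj{\eta}=-\konj{\eta_1}\,\konj{\pi_2}$, exhibiting $\konj{\pi_2}$ as a right divisor of~$\eta$, and hence of $\theta\eta$. I would now use the purity of $\theta\eta$ to convert this into left divisibility: writing $\theta\eta=\mu\,\konj{\pi_2}$ and conjugating yields $-\theta\eta=\konj{\theta\eta}=\pi_2\,\konj{\mu}$, so $\pi_2$ is also a left divisor of $\theta\eta$ of norm~$p$. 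But $\pi\mid\theta$ already makes $\pi$ a left divisor of $\theta\eta$, so the uniqueness part of \lmref{quat_lnko} (applicable because we assumed $p$ does not divide $\theta\eta$) forces $\pi_2=\pi u$ for some unit $u\in\BE$. Thus $\eta=\pi\cdot u\eta_1$, and $\pi$ divides $\eta$ on the left as well.

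The second ingredient is the short identity $\theta\pi=-p\,\konj{\theta_1}$. Indeed, purity of $\theta$ gives $\pi\theta_1=\theta=-\konj{\theta}=-\konj{\theta_1}\,\konj{\pi}$; right-multiplying by $\pi$ and using $\konj{\pi}\pi=p$ yields $\theta\pi=-\konj{\theta_1}\,\konj{\pi}\,\pi=-p\,\konj{\theta_1}$. Writing $\eta=\pi\eta_1'$ from the previous paragraph, we conclude
\[
\theta\eta=\theta\pi\cdot\eta_1'=-p\,\konj{\theta_1}\,\eta_1',
\]
which is divisible by~$p$, contradicting the assumption.

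The main obstacle is the first step: tracking left versus right divisibility carefully, and using the purity of $\theta\eta$ to convert a right divisor into a left one so that both $\pi$ and $\pi_2$ become left divisors of $\theta\eta$ of the same norm~$p$. Once this is in place, the identity $\theta\pi=-p\,\konj{\theta_1}$, which is forced by the purity of $\theta$ together with $\konj{\pi}\pi=p$, finishes the job in one line.
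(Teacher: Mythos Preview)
Your proof is correct and follows essentially the same route as the paper's: a contradiction argument that factors $\theta$ and $\eta$ via \lmref{quat_lnko}, uses purity to flip between left and right divisors, and then invokes the uniqueness clause of \lmref{quat_lnko} to force the two norm-$p$ factors to be associates, making $p$ appear in the product. The only notable difference is that the paper additionally invokes \lmref{3repr_lm} to write $\theta\eta=\pi\delta_1\konj{\pi}$ and finishes via the anticommutation $\theta\eta=-\eta\theta$, whereas you bypass that lemma by using the purity of $\theta\eta$ directly and close with the clean identity $\theta\pi=-p\,\konj{\theta_1}$; your version is marginally more self-contained for that reason.
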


\begin{proof}
  Suppose that $p$ does not divide $\theta\eta$. By
  Lemma~\ref{quat_lnko} and Lemma~\ref{3repr_lm}, we have
  $\theta=\pi_1\theta_1$, $\eta=\eta_1\pi_2$, and
  $\theta\eta=\pi\delta_1\konj{\pi}$, where
  $\pi,\pi_1,\pi_2$ have norm~$p$. By the uniqueness part of
  Lemma~\ref{quat_lnko}, $\pi$, $\pi_1$, and $\konj{\pi_2}$
  are right associates; however, $\theta$, $\eta$, and
  $\theta\eta$ are pure quaternions, so
  $\theta\eta=-\konj{\theta\eta}=
  -\konj{\eta}\,\konj{\theta}=-\eta\theta=
  -\eta_1\pi_2\pi_1\theta_1$ is divisible by~$p$, a
  contradiction.
\end{proof}

\begin{lm}\label{twin_common_conj}
  Suppose $\theta,\eta\in\BE$ is such that $\theta$,
  $\eta$, and $\theta\eta$ are pure quaternions. Let
  $p\in\BZ$ be a prime such that $p^2$ divides both
  $\Norm(\theta)$ and $\Norm(\eta)$, but $p$ does not divide
  both $\theta$ and $\eta$. Then there exist elements
  $\pi,\theta_1,\eta_1\in\BE$ such that $\Norm(\pi)=p$,
  $\theta=\pi\theta_1\konj{\pi}$, and
  $\eta=\pi\eta_1\konj{\pi}$.
\end{lm}

\begin{proof}
  By Lemma~\ref{twin_div_p}, $p\mid \delta=\theta\eta$.
  Using Lemma~\ref{3repr_lm}, we can write
  $\theta=\pi\theta_1\konj{\pi}$ and
  $\eta=\pi_1\eta_1\konj{\pi_1}$, where
  $\Norm(\pi)=\Norm(\pi_1)=p$. Since $\theta$ is pure,
  $\delta=\theta\eta=-\konj{\theta}\eta$, and
  Lemma~\ref{twin_primeprop} shows that $\pi\mid\eta$.
  Applying the uniqueness part of Lemma~\ref{quat_lnko}
  to~$\eta$, we obtain that $\pi$ and $\pi_1$ are right
  associates.
\end{proof}

\begin{lm}\label{twin_common_conj1}
  Suppose $\theta,\eta\in\BE$ is such that $\theta$,
  $\eta$, and $\theta\eta$ are pure quaternions. Let
  $p\in\BZ$ be a prime such that $p^2$ divides
  $\Norm(\theta)$ but $p$ does not divide $\theta$. Then
  there exist elements $\pi,\theta_1,\eta_1\in\BE$ such that
  $\Norm(\pi)=p$, $\theta=\pi\theta_1\konj{\pi}$, and
  $p\eta=\pi\eta_1\konj{\pi}$.
\end{lm}

\begin{proof}
  Again, write $\theta=\pi\theta_1\konj{\pi}$. Suppose first
  that $\pi\mid\eta$, that is, $\eta=\pi\eta_2$, for some
  $\eta_2\in\BE$. Then $p\eta=\pi(\eta_2\pi)\konj{\pi}$, and
  we are done in this case. So, we can assume that $\pi$ does
  not divide $\eta$. Since $\theta$ is pure,
  $\delta=\theta\eta=-\konj{\theta}\eta$, and
  Lemma~\ref{twin_primeprop} shows that $p$ does not divide
  $\delta$. By Lemma~\ref{3repr_lm} and the uniqueness part
  of Lemma~\ref{quat_lnko}, we have
  $\delta=\pi\delta_1\konj{\pi}$. Then
  $\pi\theta_1\konj{\pi}\eta=\delta=\pi\delta_1\konj{\pi}$
  implies that $\theta_1\konj{\pi}\eta\pi=
  \delta_1\konj{\pi}\pi=p\delta_1$. Hence, $p\mid
  \konj{\konj{\pi}\eta\pi}\,\theta_1$.
  Lemma~\ref{twin_primeprop} shows that either
  $\konj{\pi}\mid\theta_1$ or $p\mid \konj{\pi}\eta\pi$. The
  first case is impossible because then $\theta$ would be
  divisible by~$p$. If we let $\konj{\pi}\eta\pi=p\eta_1$, then
  $p\eta=\pi\eta_1\konj{\pi}$.
\end{proof}

\begin{prp}\label{repr_exists}
  Every pair $(\theta,\eta)$ of twins is parameterized by a
  pair $(\alpha,z)\in\BE\times\BG$.
\end{prp}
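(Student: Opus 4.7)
The plan is induction on $n = \Norm(\theta) = \Norm(\eta)$, with the base case taken to be the situation where $n$ is squarefree. When $n$ is squarefree, Lemma~\ref{twin_div_p} forces every prime divisor of $n$ to divide $\theta\eta$, so $n \mid \theta\eta$ and $\theta\eta = n\delta'$ for a pure unit $\delta' \in \{\pm i, \pm j, \pm k\}$. I choose a unit $\alpha \in \BE$ with $\alpha i \konj{\alpha} = \delta'$, possible by Proposition~\ref{unit_group}. Because $\theta$ is orthogonal to $\theta\eta$, the pure integer quaternion $\konj{\alpha}\theta\alpha$ has zero $i$-component and so equals $bj + ck$ for some $b, c \in \BZ$; I set $z = b + ci \in \BG$. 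A one-line computation using $\theta\eta = -\eta\theta$ and $\theta^2 = -n$ shows $\eta = \delta'\theta$, hence $\konj{\alpha}\eta\alpha = i(bj + ck) = zk$, and $(\alpha, z)$ parameterizes $(\theta, \eta)$.

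For the inductive step, suppose $n$ has a prime $p$ with $p^2 \mid n$. If $p$ divides both $\theta$ and $\eta$, I replace $(\theta, \eta)$ by $(\theta/p, \eta/p)$, a twin pair of norm $n/p^2$, and scale $z$ by $p$ after applying the induction hypothesis. If $p$ divides neither, Lemma~\ref{twin_common_conj} gives $\pi \in \BE$ with $\Norm(\pi) = p$ together with $\theta = \pi\theta_1\konj{\pi}$ and $\eta = \pi\eta_1\konj{\pi}$; the pair $(\theta_1, \eta_1)$ is then a twin pair of norm $n/p^2$ (purity is automatic and $\theta_1\eta_1 = \konj{\pi}\theta\eta\pi/p^2$ is pure because conjugations of the form $\konj{\pi}(\cdot)\pi$ send pure quaternions to pure quaternions), so induction yields $(\alpha_1, z_1) \in \BE \times \BG$ and $(\pi\alpha_1, z_1)$ parameterizes the original pair. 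Finally, if $p$ divides exactly one of $\theta, \eta$, say $p \mid \theta$ and $p \nmid \eta$, then $p^2 \mid \Norm(\eta)$ and $p \nmid \eta$ let me apply Lemma~\ref{twin_common_conj1} with the roles of $\theta$ and $\eta$ swapped, producing $\pi$ of norm $p$ with $\eta = \pi\eta_1\konj{\pi}$ and $p\theta = \pi\theta_1\konj{\pi}$; writing $\theta = p\theta'$ yields $\theta_1 = \konj{\pi}\theta'\pi$, and by exploiting the twin relation $\theta\eta = -\eta\theta$ together with Lemma~\ref{twin_primeprop} one verifies that in fact $p \mid \theta_1$ in $\BE$, so $(\theta_1/p, \eta_1)$ is a twin pair of norm $n/p^2$ to which induction applies.

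The hardest part will be this last case: showing that $\theta_1$ is divisible by $p$ as a whole element of $\BE$, not merely by $\pi$ on one side. Establishing this requires combining the explicit form $\theta_1 = \konj{\pi}\theta'\pi$ with the fact that $\pi$ is derived from a factorization of $\eta$ (and not of $\theta$), so that the twin relation $\theta\eta = -\eta\theta$ must be invoked to transport divisibility of $\theta$ by $p$ through the conjugation by $\konj{\pi}\cdot\pi$. Once this divisibility is secured, the descent to norm $n/p^2$ is immediate and the induction closes, leaving only the routine reassembly of $(\alpha, z) = (\pi\alpha_1, z_1)$ (or its analogue) in $\BE \times \BG$.
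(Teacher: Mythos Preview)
Your proposal is correct and follows essentially the same route as the paper: reduce to the squarefree case by successively pulling out conjugations $\pi(\cdot)\konj{\pi}$, then handle squarefree norm directly via Lemma~\ref{twin_div_p}. Your induction on $\Norm(\theta)$ is equivalent to the paper's ``maximize $\Norm(\alpha)$'' argument, and your base case matches the paper's endgame verbatim.

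The one place you diverge is Case~3, and you make it harder than necessary. The paper, in your notation, first strips the $p$-divisible vector: writing $\theta=p^{\ell}\theta'$ with $p\nmid\theta'$, it applies Lemma~\ref{twin_common_conj1} to the pair $(\eta,\theta')$ rather than $(\eta,\theta)$. This gives $\eta=\pi\eta_1\konj{\pi}$ and $p\theta'=\pi\theta_1'\konj{\pi}$ with $\theta_1'\in\BE$ directly, so $\theta=\pi(p^{\ell-1}\theta_1')\konj{\pi}$ and the descent is immediate---no ``hardest part'' at all. Your version applies the lemma to $(\eta,\theta)$, which forces you to verify afterward that $p\mid\theta_1=\konj{\pi}\theta'\pi$. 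Your hinted argument via Lemma~\ref{twin_primeprop} does work (take the lemma's ``$\theta$'' to be $\eta$ and its ``$\eta$'' to be $\theta'\pi$; then $\konj{\eta}\theta'\pi=\theta'\eta\pi=p\,\theta'\pi\eta_1$ is divisible by~$p$, so $\pi\mid\theta'\pi$ and $\theta_1=\konj{\pi}(\theta'\pi)\in p\BE$), but it is an avoidable detour.
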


\begin{proof}
  Let $M=\Norm(\theta)=\Norm(\eta)$. Consider all
  representations of the form
  $\theta=\alpha\theta_1\konj{\alpha}$ and
  $\eta=\alpha\eta_1\konj{\alpha}$, where
  $\alpha,\theta_1,\eta_1\in\BE$. Clearly, $\theta_1$ and
  $\eta_1$ are twins as well. There exists such a
  representation (with $\alpha=1$), and so there is one with
  $\Norm(\alpha)$ as large as possible. We present
  reductions to increase $\Norm(\alpha)$.

  If $p$ is a prime such that $p^2$ divides both
  $\Norm(\theta_1)$ and $\Norm(\eta_1)$, but $p$ divides
  neither~$\theta_1$ nor~$\eta_1$, then
  Lemma~\ref{twin_common_conj} allows us to replace $\alpha$
  with $\alpha\pi$. If this $p$ does not divide~$\theta_1$
  but divides $\eta_1$, then we write $\eta_1=p^\ell\eta'$ such
  that $\eta'$ is not divisible by $p$, and apply
  Lemma~\ref{twin_common_conj1} to $\theta_1$ and $\eta'$.
  We again obtain a suitable $\pi$ by using up a
  factor of~$p$ out of $p^\ell$.

  If none of these reductions can be performed further, then
  $\theta_1=d\theta_2$ and $\eta_1=d\eta_2$, for some
  $d\in\BZ$ such that $\Norm(\theta_2)=\Norm(\eta_2)$ is
  square-free. By Lemma~\ref{twin_div_p}, every integer prime
  divisor of $\Norm(\eta_2)$ divides $\theta_2\eta_2$.
  Hence, the square-free integer $\Norm(\eta_2)$ divides
  $\theta_2\eta_2$, whose norm is $\Norm(\eta_2)^2$.
  Therefore, $\theta_2\eta_2=\Norm(\eta_2)\vep$, where $\vep$
  is a unit of~$\BE$. We may assume $\vep=i$, by the
  argument in the last paragraph of the proof of
  Theorem~\ref{3repr}. Thus, $\eta_2=\theta_2i$, and as
  $\theta_2$ and $\eta_2$ are pure quaternions,
  $\theta_2=z_1j$, for some $z_1\in\BG$.  Then $\theta=\alpha
  (dz_1)j\,\konj{\alpha}$ and $\eta=\alpha
  (dz_1)k\,\konj{\alpha}$, proving the claim.
\end{proof}

\begin{lm}\label{square_root}
  Let $(\alpha,z)\in\BH\times\BC$ and suppose that $z=s^2t$,
  for some $s,t\in\BC$. Then the pairs $(\alpha,z)$ and
  $(\alpha s, t)$ are equivalent.
\end{lm}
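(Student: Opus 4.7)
The plan is to reduce to a short algebraic identity inside $\BH$. By Definition~\ref{eq_for_pairs}, equivalence of $(\alpha,z)$ and $(\alpha s, t)$ amounts to the two equalities
\[
\alpha z j \konj{\alpha} = (\alpha s)\, t\, j\, \konj{(\alpha s)}
\quad\text{and}\quad
\alpha z k \konj{\alpha} = (\alpha s)\, t\, k\, \konj{(\alpha s)}.
\]
Using $\konj{\alpha s}=\konj{s}\konj{\alpha}$ (Proposition~\ref{sigma_prop}) and cancelling $\alpha$ on the left and $\konj{\alpha}$ on the right, it suffices to verify the purely complex-quaternion identities
\[
s^{2} t j = s t j \konj{s}
\quad\text{and}\quad
s^{2} t k = s t k \konj{s}.
\]

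The key ingredient is the identity $zj = j\konj{z}$ and $zk = k\konj{z}$ for every $z\in\BC$ viewed as a quaternion, which one checks in one line by writing $z=a+bi$ and using $ij=k$, $ki=j$. Specialising to $z=s$ gives $j\konj{s}=sj$ and $k\konj{s}=sk$. Since $s,t\in\BC$ commute, I then compute
\[
s t j \konj{s} = s\bigl(t\,(j\konj{s})\bigr) = s\,(t\,s\,j) = s\,(s t)\,j = s^{2}tj,
\]
and an identical computation with $k$ replacing $j$ gives $stk\konj{s}=s^{2}tk$. This proves the two required equalities, hence the lemma.

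There is no real obstacle here; the whole content is the observation that conjugation by $j$ (or $k$) acts as complex conjugation on the subfield $\BC\subset\BH$, together with the fact that $s$ and $t$ commute as complex numbers. Essentially everything follows by cancellation in $\BH$ once the identity $sj = j\konj{s}$ is in hand.
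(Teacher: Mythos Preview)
Your proof is correct and follows essentially the same approach as the paper: both hinge on the identity that conjugation by $j$ (and by $k$) acts as complex conjugation on $\BC\subset\BH$, which the paper writes as $jsj^{-1}=\konj{s}$ and you write as $sj=j\konj{s}$. The remaining steps---cancelling $\alpha$ and $\konj{\alpha}$ and using that $s,t$ commute---are identical in spirit, with your version simply spelling out the computation in more detail.
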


\begin{proof}
  Since $j s j^{-1} = \konj{s}$ for every $s\in\BC$, we have
  that $zj = st j\,\konj{s}$ and similarly, $zk = st
  k\,\konj{s}$. Therefore, $\alpha z j\,\konj{\alpha}=(\alpha
  s)tj(\konj{\alpha s})$ and $\alpha z
  k\,\konj{\alpha}=(\alpha s)tk(\konj{\alpha s})$.
\end{proof}

This lemma immediately implies $(2)$ of
Theorem~\ref{repr_twins} ($(1)$ has been proven in
Proposition~\ref{repr_exists}). We now proceed to
prove~$(3)$. If a suitable $\rho$ in $(3)$ exists, then
$(\alpha_1,z_1)$ is equivalent to
$(\alpha_1\rho,z_2)=(\alpha_2, z_2)$, by
Lemma~\ref{square_root}. We now only have to prove that if
$(\alpha_1,z_1)$ and $(\alpha_2,z_2)$ are equivalent pairs
in $\BE\times\BG$, then a suitable $\rho$ exists.

Choose elements $s_r\in\BC$ satisfying $s_r^2=z_r$.
Lemma~\ref{square_root} shows that $(\alpha_r,z_r)$ is
equivalent to $(\alpha_r s_r,1)$. From $(\alpha_1 s_1)
j(\konj{\alpha_1 s_1})=(\alpha_2 s_2)j(\konj{\alpha_2 s_2})$,
we get that $(\alpha_2 s_2)^{-1}(\alpha_1s_1)$ centralizes
$j$ (as well as $k$, by the same calculation).
Since the elements of $\BH$ centralizing both $j$ and $k$
are exactly the real numbers, we get that $t=(\alpha_2
s_2)^{-1}(\alpha_1s_1)$ is contained in $\BR$.  This can be
written as $t\alpha_1^{-1}\alpha_2=s_1s_2^{-1}$.  From
$(\alpha_1 s_1) j(\konj{\alpha_1 s_1})=(\alpha_2 s_2)
j(\konj{\alpha_2 s_2})$, we get that
$\Norm(\alpha_1)^2\Norm(s_1)^2=
\Norm(\alpha_2)^2\Norm(s_2)^2$. Hence $\Norm(t)=1$, and 
$t=\pm 1$.

This implies that $\alpha_1^{-1}\alpha_2=\rho$ is a complex
number with rational components, and $\rho^2=(s_1s_2^{-1})^2
=z_1z_2^{-1}$. Therefore, $z_1z_2=(\rho z_2)^2$. The ring of
Gaussian integers is integrally closed in the field of
Gaussian numbers, so $\rho z_2\in\BG$. Since $z_1$ and $z_2$
are square-free, each Gaussian prime divisor of $z_1$ has
multiplicity $1$ in both $z_1$ and $z_2$, with the same
holding for $z_2$.  We see that $z_1$ and $z_2$ are
associates, and $\rho^2$ is a unit in $\BG$. Thus, $\rho$ is
a unit in $\BG$, establishing $(3)$ of
Theorem~\ref{repr_twins}.

Finally, we prove~$(4)$. Suppose that $(\alpha,z)$
parameterizes the twin pair $(\theta,\eta)$. Then
$\Norm(\theta)=\Norm(\alpha)^2\Norm(z)$ is a square if and
only if $\Norm(z)$ is a square. Clearly, if $z\in\BZ$ or
$iz\in\BZ$, then $\Norm(z)$ is a square.

Suppose that $\Norm(z)$ is a square and consider a Gaussian
prime divisor $\pi$ of $z$. As $z$ is square-free in~$\BG$,
the number $\pi^2$ does not divide $z$. We show that
$\pi=1+i$ is impossible. Indeed, all other Gaussian primes
have odd norm, so $\Norm(z)$ would have to be of the form
$4k+2$, which cannot be the square of an integer. Similarly,
if $p=\Norm(\pi)$ is an odd prime (of the from $4k+1$), then
$p\mid \Norm(z)$, and so the conjugate of $\pi$ must also be
a factor of $z$. Therefore, $z$ is indeed either real or
pure imaginary, and the proof of Theorem~\ref{repr_twins} is
complete.\qed

\begin{thm}\label{count_twins}
  For a positive integer $M$, denote by $\Twin(M)$ the number
  of twin pairs $(\theta,\eta)$ such that
  $\Norm(\theta)=\Norm(\eta)=M$, and let $\sigma(s)$ be the
  sum of positive integer divisors of any integer~$s$.
  Suppose that
\[
M=2^\kappa p_1^{\lambda_1}\ldots p_m^{\lambda_m}
q_1^{\mu_1}\ldots q_\ell^{\mu_\ell}\,,
\]
where $p_1,\ldots,p_m$ are primes $\equiv 1~(4)$ and
$q_1,\ldots,q_\ell$ are primes $\equiv -1~(4)$. We assume
that all $\lambda_r$ and $\mu_s$ are positive. Then
\[
\Twin(M)=24 \prod_{r=1}^m g(p_r^{\lambda_r})
\prod_{s=1}^\ell h(q_s^{\mu_s})\,,
\]
where
\[
g(p^{2\lambda})=
\sigma(p^{\lambda})+\sigma(p^{\lambda-1})\,,
\qquad
g(p^{2\lambda+1})=2\sigma(p^{\lambda})\,,
\]
and
\[
h(q^{2\mu})=\sigma(q^{\mu})+\sigma(q^{\mu-1})\,,
\qquad
h(q^{2\mu+1})=0\,.
\]
In particular, $\Twin(M)/24$ is a multiplicative function.
If there exists a twin pair with norm~$M$, then $M$ is the
sum of two squares.
\end{thm}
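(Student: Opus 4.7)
The plan is to translate the count into the parameterization of \thref{repr_twins} and then factor the resulting sum over the prime divisors of~$M$.

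First I would invoke parts~(1)--(3) of \thref{repr_twins} to set up a bijection between ordered twin pairs $(\theta,\eta)$ with $\Norm(\theta)=\Norm(\eta)=M$ and orbits of the group $\{\pm 1,\pm i\}$ acting on the set of pairs $(\alpha,z)\in\BE\times\BG$ with $\alpha\neq 0$, $z\neq 0$, $z$ squarefree in~$\BG$, and $\Norm(\alpha)^2\Norm(z)=M$, via $\rho\cdot(\alpha,z)=(\alpha\rho,\rho^2 z)$. Since $\BH$ is a division ring and $\alpha\neq 0$, the equation $\alpha\rho=\alpha$ forces $\rho=1$, so this action is free and every orbit has exactly four elements. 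Writing $n=\Norm(\alpha)$ and $d=\Norm(z)$, this gives
\[
\Twin(M)\;=\;\tfrac14\sum_{n^2 d=M} A(n)\,S(d),
\]
where $A(n)$ is the number of integral quaternions of norm~$n$ and $S(d)$ is the number of squarefree Gaussian integers of norm~$d$.

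Next I would evaluate each factor. By Corollary~\ref{quat_int_count}, $A(n)/24$ is the sum of the positive odd divisors of~$n$, a multiplicative function which coincides with $\sigma$ on odd arguments and equals $1$ on every power of~$2$. For $S(d)$ I would use the classification of Gauss primes and \thref{fermat_theorem}: a squarefree element of~$\BG$ is one of four unit multiples of a product of distinct Gauss primes, so $S(d)=4S'(d)$ for a multiplicative $S'$. Its local factors, determined by how each rational prime ramifies, splits, or stays inert in~$\BG$, are: $S'(2^a)=1$ for $a\in\{0,1\}$ and $0$ otherwise; for $p\equiv 1\,(4)$, $S'(p^b)$ equals $1,2,1,0$ for $b=0,1,2,\geq 3$; and for $q\equiv -1\,(4)$, $S'(q^c)=1$ for $c\in\{0,2\}$ and $0$ otherwise.

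Because both factors in the convolution are multiplicative and the constraint $n^2 d=M$ separates over primes, the sum becomes a product over prime divisors of~$M$; the constants combine as $\tfrac14\cdot 24\cdot 4=24$. A short case analysis at each prime then recovers the piecewise formulas. At $p=2$ the local factor is always~$1$, accounting for the absence of $\kappa$ in the answer. At $p\equiv 1\,(4)$ with $v_p(M)=\lambda$, splitting on $v_p(d)\in\{0,1,2\}$ subject to $v_p(d)\equiv\lambda\pmod 2$ reproduces $g(p^\lambda)$, with the value depending on the parity of~$\lambda$. At $q\equiv -1\,(4)$ with $v_q(M)=\mu$, only $v_q(d)\in\{0,2\}$ can contribute, and the parity condition then forces $\mu$ to be even; this yields $h(q^\mu)$, including the vanishing $h(q^\mu)=0$ for odd~$\mu$.

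The final assertion is then immediate: $\Twin(M)>0$ forces every $h(q_s^{\mu_s})>0$, hence every $\mu_s$ is even, and \thref{fermat_theorem} then expresses $M$ as a sum of two squares. I expect the main obstacle to be the careful bookkeeping of the three local factors; the conceptual work is done by \thref{repr_twins}, whose free orbit structure reduces the whole counting problem to multiplicative arithmetic in~$\BZ$ and~$\BG$.
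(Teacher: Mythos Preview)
Your proposal is correct and follows essentially the same approach as the paper: both reduce via \thref{repr_twins} to counting pairs $(\alpha,z)\in\BE\times\BG$ with $z$ squarefree and $\Norm(\alpha)^2\Norm(z)=M$, divide by~$4$ for the free $\{\pm 1,\pm i\}$-action, and then split the resulting convolution of the quaternion count (Corollary~\ref{quat_int_count}) with the squarefree-Gaussian count into local factors at the primes $2$, $p\equiv 1\,(4)$, and $q\equiv -1\,(4)$. Your formulation is slightly more systematic in naming the multiplicative functions $A(n)/24$ and $S'(d)$ and arguing multiplicativity abstractly, while the paper works prime by prime directly; the computations and the final derivation of $g$ and $h$ are identical.
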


\begin{proof}
  By Theorem~\ref{repr_twins}, we have to count the number of
  pairs $(\alpha,z)\in\BE\times\BG$, where
  $M=\Norm(\alpha)^2\Norm(z)$ and $z$ is square-free
  in~$\BG$, and divide the number of solutions by~$4$
  due to $(3)$.

  Writing $z$ as a product of Gaussian primes, we see that in
  the canonical form of $\Norm(z)$ every prime of the form
  $4k+3$ has exponent~$2$, every prime of the form $4k+1$
  has exponent $2$ or~$1$, and the prime~$2$ has
  exponent~$1$ (or~$0$). If such a number $t=\Norm(z)$ is
  given, then the only freedom in determining~$z$ occurs at
  the primes $p$ of the form~$4k+1$. Indeed,
  $p=\pi_1\konj{\pi_1}$ is a product of two Gaussian primes,
  and if the exponent of~$p$ in~$\Norm(z)$ is~$1$, then we
  can decide whether to put $\pi$ or~$\konj{\pi}$ into~$z$.
  We have to multiply the resulting~$z$ with the four
  Gaussian units. Thus, if $4f(t)$ denotes the number of
  solutions for $z$ with norm~$t$, then $f$ is a
  multiplicative function, which is $1$ or~$0$ for every
  prime power, except that $f(p^1)=2$ when $p\equiv 1~(4)$.

  Corollary~\ref{quat_int_count} allows us to count the
  number of integral quaternions~$\alpha$ with given
  norm~$\Norm(\alpha)$. The result is $24$ times a
  multiplicative function (the sum of odd divisors of
  $\Norm(\alpha)$). We now go through all primes in the
  decomposition of~$M$ to see how we can split $M$ into
  $\Norm(\alpha)^2\Norm(z)$.

  If the prime is $2$, then we must put $2^\kappa$ into
  $\Norm(\alpha)^2$ when $\kappa$ is even, and must put
  $2^{\kappa-1}$ into~$\Norm(\alpha)^2$ when $\kappa$ is odd.
  By Corollary~\ref{quat_int_count}, we see that
  $\Twin(2^\kappa)=24$, and $\Twin(M)$ does not depend on
  $\kappa$.

  Next, we consider a prime $q_r\equiv -1~(4)$. In this case, 
  $\mu_r$ must be even for a solution to exist, and we can 
  either put the entire $q_r^{\mu_r}$~into $\Norm(\alpha)^2$ 
  or put $q_r^{\mu_r-2}$ into~$\Norm(\alpha)^2$ and $q_r$
  into~$z$. This proves the formula in the theorem for~$h$.

  Finally, for $p_r\equiv 1~(4)$ there are two cases to
  consider. If $\lambda_r$ is even, then we can put $0$ or
  $2$ copies of $p_r$~into $\Norm(z)$. If $\lambda_r$ is
  odd, then we must put $1$ copy of $p_r$ into~$\Norm(z)$
  (and the corresponding Gaussian primes in $z$ can be
  chosen in $2$ ways). This proves the formula for~$g$.

  Since we can put together the solutions for $M$ from the
  solutions for the prime divisors of $M$ independently, we
  get the formula in the theorem.
\end{proof}

\begin{cor}\label{twin_to_icube}
  Let $(v,w)$ be a pair of twins in $\BZ^3$ whose length is
  an integer. Then there exists an $\alpha\in\BE$ and an
  integer~$d$ such that the last two columns of $dE(\alpha)$
  are either $(v,w)$ or $(-w,v)$. Therefore, $(v,w)$ can be
  extended to an icube.
\end{cor}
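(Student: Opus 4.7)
The plan is to apply Theorem~\ref{repr_twins} directly to the twin pair $(v,w)$ and then read off the Euler matrix description from the parameterization. Let $\theta=V(v)$ and $\eta=V(w)$. By parts~$(1)$ and $(2)$ of Theorem~\ref{repr_twins}, we obtain a pair $(\alpha,z)\in\BE\times\BG$ that parameterizes $(\theta,\eta)$, i.e.
\[
\theta=\alpha z j\,\konj{\alpha}, \qquad \eta=\alpha z k\,\konj{\alpha},
\]
with $z$ squarefree in $\BG$. Since the common length of $v$ and $w$ is assumed to be an integer, part~$(4)$ of Theorem~\ref{repr_twins} tells us that $z$ is either real or pure imaginary; as $z\in\BG$ is nonzero this means $z=d$ or $z=di$ for some nonzero $d\in\BZ$.

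Now I would handle the two cases in parallel. In the real case $z=d$, the formulas become $\theta=d\,\alpha j\,\konj{\alpha}$ and $\eta=d\,\alpha k\,\konj{\alpha}$, which are literally $d$ times the second and third columns of the Euler matrix $E(\alpha)$. Hence the last two columns of $dE(\alpha)$ are exactly $(v,w)$. In the pure imaginary case $z=di$, I would use the quaternion identities $ij=k$ and $ik=-j$ to compute $zj=dk$ and $zk=-dj$, obtaining
\[
\theta=d\,\alpha k\,\konj{\alpha}, \qquad \eta=-d\,\alpha j\,\konj{\alpha}.
\]
Thus the last two columns of $dE(\alpha)$ are $d\,\alpha j\,\konj{\alpha}=-w$ and $d\,\alpha k\,\konj{\alpha}=v$, which is the pair $(-w,v)$. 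This establishes the first assertion of the corollary.

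For the extension statement, I would simply observe that all three columns of $dE(\alpha)$ have integer entries (by Proposition~\ref{integer_quat_elementary}, since $\alpha\in\BE$ forces $\alpha i\,\konj{\alpha}$, $\alpha j\,\konj{\alpha}$, $\alpha k\,\konj{\alpha}$ to have integer coefficients, and $d\in\BZ$), are pairwise orthogonal, and have common length $|d|\Norm(\alpha)$ by Theorem~\ref{euler_matrix}. So the columns of $dE(\alpha)$ form an icube. In the first case this icube is $(d\,\alpha i\,\konj{\alpha},\,v,\,w)$, directly extending $(v,w)$. In the second case the columns are $(d\,\alpha i\,\konj{\alpha},\,-w,\,v)$; by permuting and choosing an appropriate sign for the first column we obtain the icube $(v,w,\pm d\,\alpha i\,\konj{\alpha})$ extending $(v,w)$.

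The whole argument is essentially a bookkeeping exercise once Theorem~\ref{repr_twins} is in hand. The only mildly delicate point is the sign/orientation tracking in the pure imaginary case, which is why the statement has to allow for both $(v,w)$ and $(-w,v)$ as the last two columns. No genuine obstacle arises, as the hard structural work was already done in proving Theorem~\ref{repr_twins}.
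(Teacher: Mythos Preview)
Your argument is correct and follows essentially the same route as the paper's own proof: apply parts $(1)$, $(2)$, $(4)$ of Theorem~\ref{repr_twins} to get a squarefree parameterization $(\alpha,z)$ with $z=d$ or $z=di$, and read off the two cases. Your write-up is in fact slightly more detailed than the paper's in justifying the extension step.
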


\begin{proof}
  Let $\big(V(v),V(w)\big)=(\theta,\eta)$ be parameterized by
  $(\alpha,z)$, where $z$ is square-free. By $(4)$ of
  Theorem~\ref{repr_twins}, $z$ is either real or pure
  imaginary, so $z=d$ or $z=di$ for some integer~$d$. In the
  first case, $\theta=d\alpha j\,\konj{\alpha}$ and
  $\eta=d\alpha k\,\konj{\alpha}$, so the last two columns
  of $dE(\alpha)$ are $v$ and $w$. In the second case,
  $\theta=d\alpha k\,\konj{\alpha}$ and $\eta=-d\alpha
  j\,\konj{\alpha}$, so the last two columns of $dE(\alpha)$
  are $-w$ and $v$.
\end{proof}

\begin{cor}\label{primitive_icube_repr}
  If $(u,v,w)$ is an icube, then there is an $\alpha\in\BE$
  and $d\in\BZ$ such that $(u,v,w)$ and $dE(\alpha)$ can be
  obtained from each other by permuting and changing the signs
  of certain columns.
\end{cor}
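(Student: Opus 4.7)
The plan is to reduce the statement to Corollary~\ref{twin_to_icube} by isolating the twin pair $(v,w)$ from the icube $(u,v,w)$, and then showing that the first vector $u$ is forced up to a sign by $v$ and $w$.

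First, observe that any two vectors in an icube form a twin pair, so in particular $(v,w)$ is a twin pair in $\BZ^3$ whose common length equals the (integer) length of $u$. By Corollary~\ref{twin_to_icube} there exist $\alpha\in\BE$ and $d\in\BZ$ such that the last two columns of $dE(\alpha)$ are either $(v,w)$ or $(-w,v)$. In the second situation, I use part~$(B)$ of Proposition~\ref{matrix_permute} (swapping the last two columns together with a sign change on one of them) to pass, by a permutation and sign change of columns, to a matrix whose last two columns are exactly $(v,w)$. Thus without loss of generality I may assume that $dE(\alpha)$ has the form $(u',v,w)$ for some $u'\in\BZ^3$.

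Next, I argue $u=\pm u'$. The columns of $dE(\alpha)$ are pairwise orthogonal and of common length $d\Norm(\alpha)$; in particular $u'$ is orthogonal to both $v$ and $w$ and satisfies $\norm{u'}=\norm{v}=\norm{w}=\norm{u}$. In $\BR^3$ the orthogonal complement of the plane spanned by the linearly independent vectors $v,w$ is one-dimensional, so $u$ and $u'$ are scalar multiples of the same nonzero vector; since they have the same length, $u=\pm u'$. Flipping the sign of the first column of $dE(\alpha)$ if necessary (which is permitted by the statement), I obtain exactly $(u,v,w)$.

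The only real step is the sign-matching at the end, together with invoking Proposition~\ref{matrix_permute} to normalize the case $(-w,v)$ back to $(v,w)$; the rest is just unpacking definitions. The ``main obstacle,'' if there is one, is keeping careful track of the parity of sign changes so that everything is achieved by legal column permutations and sign changes of columns, but since the statement places no restriction on how many columns may be flipped, no further bookkeeping is required.
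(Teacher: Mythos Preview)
Your proof is correct and follows essentially the same approach as the paper: both invoke Corollary~\ref{twin_to_icube} on the twin pair $(v,w)$ and then observe that the remaining column is forced up to sign because the orthogonal complement of $\mathrm{span}(v,w)$ is one-dimensional. The only difference is cosmetic: your appeal to Proposition~\ref{matrix_permute}~(B) is unnecessary, since the passage from last two columns $(-w,v)$ to $(v,w)$ is already a permutation-plus-sign-change of columns and requires no justification via the quaternion $\alpha$.
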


\begin{proof}
  By Proposition~\ref{icube_length_int}, the edge length is
  an integer. Let $\alpha$ and $d$ be given by
  Corollary~\ref{twin_to_icube}. Then the columns of
  $dE(\alpha)$ and $(\pm u,\pm v,\pm w)$ share two
  orthogonal vectors, and so they share the third column of
  $E(\alpha)$ as well.
\end{proof}

\begin{lm}\label{twin_N(z)_sqfree}
  Suppose that $\alpha\in\BE$, $z$ is a square-free
  Gaussian integer and $\theta=\alpha zj\,\konj{\alpha}$ is
  primitive.  Then $\Norm(z)$ is the square-free part of\/
  $\Norm(\theta)$.
\end{lm}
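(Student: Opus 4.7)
The plan is to compute $\Norm(\theta)$ in terms of $\Norm(\alpha)$ and $\Norm(z)$ and then to use primitivity of $\theta$ to show that $\Norm(z)$ is itself squarefree in $\BZ$. Multiplicativity of the quaternion norm immediately gives
\[
\Norm(\theta)=\Norm(\alpha)^{2}\Norm(z),
\]
so $\Norm(\theta)/\Norm(z)=\Norm(\alpha)^{2}$ is a perfect square. Once $\Norm(z)$ is known to be squarefree in $\BZ$, it will automatically be the squarefree part of $\Norm(\theta)$.

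The pivotal step will be to establish the claim that no rational prime $p$ divides $z$ in $\BG$. If $p\mid z$, writing $z=pz'$ gives $\theta = p\cdot\alpha z'j\,\konj{\alpha}$, and the factor $\alpha z'j\,\konj{\alpha}$ lies in $\BE$ and is a pure quaternion, since $R(\alpha)$ carries pure quaternions to pure quaternions and $z'j$ is pure. Proposition~\ref{integer_quat_elementary} then forces this factor to have integer coefficients, so $p$ divides every integer coefficient of $\theta$, contradicting primitivity. This is the only place where the hypothesis that $\theta$ is primitive enters, and I expect it to be the main step.

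Given the claim, I would finish with a short case analysis on $p$ using the classification of Gauss primes. For $p\equiv 3\pmod 4$ the prime $p$ is itself a Gauss prime of norm $p^{2}$, so the claim yields $p\nmid\Norm(z)$. For $p\equiv 1\pmod 4$, write $p=\pi\konj{\pi}$ with $\pi,\konj{\pi}$ non-associate; the claim prevents both from dividing $z$, and squarefreeness of $z$ in $\BG$ allows at most one of them to occur with exponent one, so $p$ appears in $\Norm(z)$ to exponent at most~$1$. For $p=2$, squarefreeness of $z$ alone excludes $(1+i)^{2}\mid z$, so again $2$ appears in $\Norm(z)$ to exponent at most~$1$. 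Combining these cases, $\Norm(z)$ is squarefree in $\BZ$, which by the first paragraph finishes the proof.
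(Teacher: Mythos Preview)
Your proof is correct and follows essentially the same route as the paper's: reduce to showing $\Norm(z)$ is squarefree via $\Norm(\theta)=\Norm(\alpha)^{2}\Norm(z)$, use primitivity of $\theta$ to rule out any rational prime dividing $z$, and then run the three-case analysis over the Gauss-prime types. Your argument is slightly more explicit in one spot (invoking Proposition~\ref{integer_quat_elementary} to see that $\alpha z'j\,\konj{\alpha}$ has integer coefficients), and your treatment of $p=2$ uses only squarefreeness of $z$ rather than the divisibility claim, but these are cosmetic differences.
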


\begin{proof}
  As $\Norm(\theta)=\Norm(\alpha)^2\Norm(z)$, it is
  sufficient to prove that $\Norm(z)$ is square-free.
  Suppose that $p^2\mid\Norm(z)$, for a prime $0<p\in\BZ$. If
  $p\equiv 3~(4)$, then $p$ is a Gaussian prime, so $p\mid z$,
  contradicting the fact that $\theta$ is primitive.  If
  $p=2$, then $2\mid z$, a contradiction.  Finally, if
  $p=\pi\konj{\pi}$, for some Gaussian prime $\pi$, then $\pi$
  and $\konj{\pi}$ cannot both divide $z$, because then $p$
  would divide the primitive $\theta$. On the other hand,
  the exponent of $\pi$ and $\konj{\pi}$ is at most~$1$
  in~$z$, since $z$ is square-free. Therefore, $\Norm(z)$
  cannot be divisible by~$p^2$, a contradiction.
\end{proof}

\begin{proof}[Proof of
  Theorem~\ref{primitive_unique_big_thm}]
  We have proved the existence statement in
  Remark~\ref{3repr_rem}. Suppose that $x$ is primitive.  If
  $(u,v,w)$ is an icube with edge length~$m$ such that
  $x=au+bv+cw$, then $(u,v,w)$ is also primitive. Therefore,
  by Corollary~\ref{primitive_icube_repr} (or by
  Corollary~\ref{sarkozy_cor}), we may assume that
  $(u,v,w)=E(\alpha)$, for some $\alpha\in\BE$.  Thus, it is
  sufficient to deal with ``Eulerian'' cubic lattices.

  Suppose that $x$ is contained in two such sublattices:
  $V(x)=\alpha_1\beta_1\,\konj{\alpha_1}=
  \alpha_2\beta_2\,\konj{\alpha_2}$. By the uniqueness part
  of Theorem~\ref{3repr}, there exists a unit $\vep\in\BE$
  such that $\alpha_2=\alpha_1\vep^{-1}$, and $\beta_2$,
  $\beta_1$ are group-conjugates via~$\vep$.
  Propositions~\ref{matrix_permute} and~\ref{unit_group}
  show that the matrices $E(\alpha_1)$ and $E(\alpha_2)$ may
  differ only by permutations and sign changes of columns.
  Therefore, the two cubic lattices are actually the same,
  proving the uniqueness part of the theorem. 
\end{proof}

\begin{proof}[Proof of
  Corollary~\ref{primitive_unique_big_cor}]
  Using the notation of the previous proof, let $L$ denote
  the unique sublattice obtained there.
  Proposition~\ref{unit_group} shows that $\beta_1$ and
  $\beta_2$ have the same number of zero components.
  Therefore, when considering the three cases of
  Corollary~\ref{primitive_unique_big_cor} (which are
  distinguished by the number of zero components of~$x$
  relative to~$L$), it does not matter which generating
  $\alpha$ we choose for~$L$.

  We now show that every twin of~$x$ is contained in $L$. Let
  $\eta_1$ be a twin of $\theta=V(x)$, and let
  $(\alpha_1,z_1)$ parameterize the pair $(\theta,\eta_1)$,
  with $z_1$ square-free. By
  Lemma~\ref{twin_N(z)_sqfree}, $\Norm(z_1)=n$, and so
  $\Norm(\alpha_1)=m$. Thus, the sublattice generated
  by~$\alpha_1$ is~$L$. Since $z_1j$ has a zero component,
  by case $(1)$ of Corollary~\ref{primitive_unique_big_cor},
  the vector~$x$ cannot have a twin.

  If the norm of~$x$ is a square, then $1=n=\Norm(z_1)$, and
  $z_1\in\{\pm 1, \pm i\}$. Thus, $x$~and each of
  its twins has exactly one nonzero component relative
  to~$L$. Therefore, $x$ has exactly~$4$ twins. Conversely,
  if $x$ has only one nonzero component relative to~$L$,
  then its length is obviously an integer, since the same
  holds for the generating vectors of~$L$. Hence, $(3)$ is
  proved.

  Finally, suppose that none of the components of~$z_1$ is
  zero (so $x$ has exactly one zero component). Let $\eta_2$
  be another twin of $\theta$, parameterized by
  $(\alpha_2,z_2)$. Again, $\alpha_2=\alpha_1\vep^{-1}$ and
  $\vep z_1j\vep^{-1}=z_2j$, for some unit~$\vep$; however,
  not every unit $\vep$ yields a twin of~$\theta$. Indeed,
  if $\vep\notin Q=\{\pm 1,\pm i, \pm j, \pm k\}$, then
  Proposition~\ref{unit_group} shows that group-conjugation
  by $\vep$ induces a fixed point free permutation on the
  components of the vectors (while possibly changing some
  signs). We know that the first component of $z_1 j$ and of
  $z_2 j$ is zero.  Therefore, $\vep\notin Q$ can happen only
  if $z_1 j$ and $z_2 j$ have two nonzero components, which
  we have excluded. Thus, $\vep\in Q$. The two twins of
  $\theta$ in question are $\eta_1=\alpha_1
  z_1k\,\konj{\alpha_1}$ and
\[
\eta_2=\alpha_2 z_2k\,\konj{\alpha_2}=
-\alpha_1 z_1(j\vep^{-1}i\vep)\konj{\alpha_1}\,.
\]
Let us calculate this now.

If $\vep\in\{\pm 1,\pm i\}$, then $\eta_2=\eta_1$. (This
has been noted in $(3)$ of Theorem~\ref{repr_twins}.)

If $\vep\in \{\pm j, \pm k\}$, then $\eta_2=-\eta_1$
(This is always obviously another twin of $\theta$.)

Thus, if $x$ has two nonzero components relative to~$L$, then
it has no more than two twins, completing the proof of
Corollary~\ref{primitive_unique_big_cor}.
\end{proof}

\section{Twin-complete numbers}
\label{sec_twin_compl}

\noindent
In this section we first prove the characterization of
twin-complete numbers given in
Theorem~\ref{twin_compl_full} and then discuss
Conjecture~\ref{Gauss-conj}.

\begin{lm}\label{twin_4n}
  If\/ $4n$ is twin-complete, then so is~$n$.
\end{lm}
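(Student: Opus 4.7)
The plan is to use a simple parity argument modulo~$4$ together with the doubling map $v\mapsto 2v$.

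First I would verify the non-vanishing condition in Definition~\ref{twin_compl_df}: if $n=4^a(8k+7)$ for some $a,k\ge 0$, then $4n=4^{a+1}(8k+7)$ is also of that forbidden form, so by Lagrange's Theorem~\ref{lagrange_theorem} there is no integer vector of norm $4n$, contradicting twin-completeness of $4n$. Hence $n$ is not of the form $4^a(8k+7)$, and consequently a vector of norm $n$ exists.

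Next, the core step: take any $v\in\BZ^3$ with $\norm{v}^2=n$. Then $2v\in\BZ^3$ has norm $4n$, and by the twin-completeness of $4n$ there is some $w\in\BZ^3$ with $\norm{w}^2=4n$ and $w\perp 2v$ (equivalently $w\perp v$). The key observation is that if $w=(d,e,f)$ then
\[
d^2+e^2+f^2=4n\equiv 0\pmod 4,
\]
and since each square is $\equiv 0$ or $1\pmod 4$, the only way for three of them to sum to $0\pmod 4$ is for all three to be $\equiv 0\pmod 4$; that is, $d,e,f$ are all even. Thus $w=2w'$ with $w'\in\BZ^3$, and $w'$ satisfies $\norm{w'}^2=n$ and $w'\perp v$, so $w'$ is a twin of $v$. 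This shows every norm-$n$ vector has a twin, hence $n$ is twin-complete.

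No step looks like a real obstacle here; the whole argument rests on the elementary observation that three integer squares summing to a multiple of~$4$ must individually be even, which then makes the halved vector $w/2$ a legitimate element of $\BZ^3$. The only subtle point is making sure both clauses of Definition~\ref{twin_compl_df} are addressed (existence of a norm-$n$ vector, and every such vector having a twin); both follow immediately as above.
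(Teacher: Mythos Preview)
Your proof is correct and follows essentially the same approach as the paper: double the vector, find a twin of norm $4n$, then use the mod-$4$ observation that a vector of norm divisible by $4$ must have all coordinates even, so halving it yields the desired twin. You are slightly more careful than the paper in explicitly verifying the existence clause of Definition~\ref{twin_compl_df}, which the paper leaves implicit.
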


\begin{proof}
  Every pure quaternion whose norm is divisible by $4$ is
  divisible by~$2$. This follows by looking at the
  coefficients mod~$4$ (or from
  Theorem~\ref{twin_quad_pow}).  Thus, if $\Norm(\theta)=n$,
  then $2\theta$ has a twin $\eta$, and so $\eta/2$ is a
  twin of~$\theta$.
\end{proof}

\begin{lm}\label{twin_yes_primitives}
  Let $\beta\in\BE$ be a primitive pure quaternion and $m>0$
  an odd positive integer. Then there exists an
  $\alpha\in\BE$ with norm~$m$ such that
  $\alpha\beta\,\konj{\alpha}$ is primitive.
\end{lm}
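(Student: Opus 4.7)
The plan is to proceed by induction on the number of distinct prime divisors of~$m$, peeling off one odd prime at a time using Proposition~\ref{twin_no_primitives}. The base case $m=1$ is immediate: take $\alpha=1\in\BE$, so that $\alpha\beta\konj{\alpha}=\beta$ is primitive by hypothesis.

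For the inductive step, factor $m=m_1p^{\ell}$ with $p$ an odd prime, $\ell\ge 1$, and $\gcd(m_1,p)=1$. Since $\beta$ is primitive and $p$ is odd, $p\nmid\beta$, so Proposition~\ref{twin_no_primitives} applies. For odd~$p$ each of the three listed counts $ep^{\ell}$, $e(p^{\ell}-p^{\ell-1})$, $e(p^{\ell}+p^{\ell-1})$ is strictly positive, since $e\ge 1$ and in the worst case $p^{\ell-1}(p-1)\ge 2$. Hence there exists $\alpha_2\in\BE$ with $\Norm(\alpha_2)=p^{\ell}$ such that $\beta_1=\alpha_2\beta\konj{\alpha_2}$ is not divisible by~$p$.

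The crucial point is that $\beta_1$ is itself primitive: it is a pure integral quaternion, and the last sentence of Theorem~\ref{twin_quad_pow} asserts that every prime divisor of $\beta_1$ must divide either~$\beta$ (impossible, as $\beta$ is primitive) or $\Norm(\alpha_2)=p^{\ell}$ (impossible by the choice of~$\alpha_2$). Applying the inductive hypothesis to $\beta_1$ and~$m_1$ (odd, with one fewer distinct prime factor) yields $\alpha_1\in\BE$ of norm~$m_1$ such that $\alpha_1\beta_1\konj{\alpha_1}$ is primitive. Setting $\alpha=\alpha_1\alpha_2\in\BE$ then gives $\Norm(\alpha)=m_1p^{\ell}=m$ and $\alpha\beta\konj{\alpha}=\alpha_1\beta_1\konj{\alpha_1}$, finishing the induction. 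I do not anticipate any serious obstacle here: the only delicate matter is the positivity of the counts in Proposition~\ref{twin_no_primitives}, and this is precisely where the oddness of~$m$ is used---it steers us clear of the degenerate case $p=2$ of Theorem~\ref{twin_quad_pow}(2), in which $2\mid\alpha\beta\konj{\alpha}$ is forced regardless of the choice of~$\alpha$.
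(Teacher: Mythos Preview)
Your proof is correct and follows essentially the same approach as the paper: induct on the prime divisors of~$m$, invoking Proposition~\ref{twin_no_primitives} for the existence of a suitable $\alpha$ at each step and Theorem~\ref{twin_quad_pow} to confirm that primitivity is preserved. The only cosmetic difference is that the paper peels off one prime at a time (with multiplicity) while you strip off an entire prime power $p^{\ell}$ at once; this changes nothing of substance.
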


\begin{proof}
  It is sufficient to prove this when $m$ is a prime, since
  we can go through the prime divisors of~$m$ one by one.
  Clearly (or by Theorem~\ref{twin_quad_pow}),
  $\alpha\beta\,\konj{\alpha}$ is primitive if and only if
  it is not divisible by $p=m$. By
  Proposition~\ref{twin_no_primitives}, there is such an
  $\alpha$ (we have actually counted them).
\end{proof}

\begin{proof}[Proof of Theorem~\ref{twin_compl_full}]
  Let $n,m>0$ be integers, with $n$ square-free.
  Suppose that $n$ is twin-complete. It is sufficient
  to prove that every primitive vector $\delta$ with norm
  $nm^2$ has a twin (since we can do induction on~$m$). By
  Theorem~\ref{3repr}, $\delta=\alpha\beta\,\konj{\alpha}$,
  for some $\alpha,\beta\in\BE$ such that $\Norm(\alpha)=m$
  and $\Norm(\beta)=n$. Since $n$ is twin-complete, $\beta$
  has a twin~$\gamma$. We show that
  $\alpha\gamma\,\konj{\alpha}$ is a twin of $\delta$, using
  Proposition~\ref{twin_quat}. Indeed,
\[
\delta\alpha\gamma\,\konj{\alpha}=
\alpha\beta(\konj{\alpha}\alpha)\gamma\,\konj{\alpha}=
m\alpha\beta\gamma\konj{\alpha}\,.
\]
This is a pure quaternion, since $\beta\gamma$ is a pure
quaternion. This proves one direction of the theorem.

For the converse, suppose that $n>0$ is square-free and
$nm^2$ is twin-complete. Then every vector $\beta$ with norm
$n$ is primitive. By Lemma~\ref{twin_4n}, we may assume that
$m$ is odd. For any given~$\beta$,
Lemma~\ref{twin_yes_primitives} yields an $\alpha$ with
norm~$m$ such that $\theta=\alpha\beta\,\konj{\alpha}$ is
primitive. Since $nm^2$ is twin-complete, $\theta$ has a
twin. By Theorem~\ref{repr_twins}, this pair of twins can be
parameterized by some $(\alpha_1,z)\in\BE\times\BG$ such
that $z$ is square-free.  Thus, $\theta=\alpha_1
zj\,\konj{\alpha_1}$, and by Lemma~\ref{twin_N(z)_sqfree},
$\Norm(z)$ is the square-free part of $\Norm(\theta)$, that
is, $\Norm(z)=n$ and $\Norm(\alpha_1)=m$. By the uniqueness
statement of Theorem~\ref{3repr}, we get that $\beta=\vep
zj\vep^{-1}$, for some unit $\vep\in\BE$, but then $\vep
zk\vep^{-1}$ is a twin of $\beta$. Hence, $n$ is
twin-complete.

To prove the last statement of the theorem, let $n$ be square-free. 
Clearly, $(a,b,0)$ and $(-b,a,0)$ are twins, so if $n$
is not the sum of three positive squares, but is the sum of
two squares, then it is twin-complete.
Conversely, suppose that $n$ is twin-complete. Let $\beta$
be a pure quaternion of norm~$n$, we have to show that at
least one of the three coordinates of the corresponding
vector is zero.  As $n$ is twin-complete, $\beta$ has a
twin, so Theorem~\ref{repr_twins} implies that $\beta=\alpha
zj\,\konj{\alpha}$, for some $(\alpha,z)\in\BE\times\BG$.
Here $n=\Norm(\beta)=\Norm(\alpha)^2\Norm(z)$, so
$\Norm(\alpha)=1$ and $\alpha$ is a unit. From
Proposition~\ref{unit_group}, we get that at least one
component of $\beta$ is zero (since this is the case
with~$zj$).
\end{proof}

Now we discuss Conjecture~\ref{Gauss-conj}. Let
\[
S\supseteq \{1,  2, 5, 10, 13, 37, 58, 85, 130\}
\]
denote the list of those square-free numbers that can be
written as a sum of two squares, but not as
a sum of three positive squares. It has been known since
\cite{GCC59} that this list is finite, and if the conjecture
fails, there is at most one number in $S$ not listed above
(\cite{Wei73}, \cite{Gro85}).

In \cite{Mor60}, it is shown that for an integer $n\in S$,
the only nonnegative solutions of
\[
xy + yz + zx = n
\]
when $n \equiv 2~(4)$ are given by $xyz = 0$, and when $n
\equiv 1~(4)$ are given by either $xyz = 0$ or $x = d$, $y =
d$, $z = (n-d^2)/2d$, where $d$ is any divisor of $n$ with
$d^2 < n$. Either way, for such numbers $n$ the above
equation has no solution with three distinct positive
integers $x,\,y,\,z$.

This characterization of the numbers in~$S$ allows us to see
the relationship they bear with Euler's \emph{numeri
  idonei}.  Euler defined a \emph{numerus idoneus} to be an
integer~$N$ such that, for any positive integer $m$, if
\[
m = x^2 \pm Ny^2, \quad (x^2, Ny^2) = 1, \quad x,y \ge 0
\]
has a unique solution, then $m$ is of the form $2^ap^k$, $a
\in \{0,1\}$, $k \ge 1$, $p$ is a prime.

Euler was aware of 65 \textit{numeri idonei}, and it is
widely believed and conjectured that this list is complete
(\cite{Rib00}). S.~Chowla proved in \cite{Cho34} that there
are only finitely many \textit{numeri idonei}, and
P.~J.~Weinberger improved this result by showing that there
can be at most one more square-free idoneal number, and, if
it exists, it must be greater than $2\cdot10^{11}$
(\cite{Wei73}). If there is indeed another square-free
idoneal number $N$, and it is even, then $4N$ is also
idoneal. On the other hand, if $N > 1848$ is idoneal and not
square-free, then $N/4$ is both square-free and idoneal
(\cite{Kan09}). Thus, there are at most 67 idoneal numbers.

Using Theorem 3.22 of \cite{Cox89} it can be shown that an
integer $N$ is a \emph{numerus idoneus} if and only if it
cannot be expressed as $xy + yz + zx$ with $0 < x < y < z$.
Combining this with the characterization by \cite{Mor60}
described above, we see that every integer in~$S$ is also
one of Euler's \emph{numeri idonei}. Checking Euler's list
of the $65$ \emph{numeri idonei} (the greatest of which is
only $1848$) against the properties listed in
Conjecture~\ref{Gauss-conj}, one sees that, indeed,
Conjecture~\ref{Gauss-conj} is true if Euler's list is
complete.

If we only consider those integers $n$ for which there is no
representation of the form $xy + yz + zx$ with $1 \le x \le
y \le z$, i.e., the even, square-free, twin-complete numbers,
then we have from \cite{BC00} that such an $n$ can only be
absent from the list of Conjecture~\ref{Gauss-conj} if the
Generalized Riemann Hypothesis fails.


\begin{thebibliography}{GCC59}

\bibitem[BC00]{BC00} J. Borwein, K. K. S. Choi, \emph{On the
    representations of $xy + yz + zx$}, Exp. Math.
  \textbf{9} (2000), 153--158.

\bibitem[Car15]{Car15} R. D. Carmichael, \emph{Diophantine
    analysis}, John Wiley \& Sons, 1915.

\bibitem[Cho34]{Cho34} S. Chowla, \emph{An extension of
    Heilbronn's class number theorem}, Quart. J. Math.
  Oxford \textbf{5} (1934), 304--307.

\bibitem[CS03]{CS03} J. H. Conway, D. A. Smith, \emph{On
    Quaternions and Octonions: Their Geometry, Arithmetic
    and Symmetry}, A K Peters, 2003.

\bibitem[Cox89]{Cox89} D. A. Cox, \emph{Primes of the form
    $x^2 + ny^2$}, New York: John Wiley \& Sons, 1989.

\bibitem[GCC59]{GCC59} E. Grosswald, A. Calloway, J.
  Calloway, \emph{The representation of integers by three
    positive squares}, Proc. Amer. Math. Soc. \textbf{10}
  (1959), 451--455.

\bibitem[Gro85]{Gro85} E. Grosswald, \emph{Representations
    of integers as sums of squares}, New York:
  Springer-Verlag, 1985.

\bibitem[HW79]{HW79} G. H. Hardy, E. M. Wright, \emph{An
    introduction to the theory of numbers, 5th Ed.}, Oxford:
  Clarendon Press, 1979.

\bibitem[H19]{H19} A. Hurwitz, \emph{Vorlesungen \"uber die
    Zahlentheorie der Quaternionen}, Berlin, 1919.

\bibitem[Kan09]{Kan09} E. Kani, \emph{Idoneal numbers and
    some generalizations}, preprint (2009), available at
  \texttt{http://www.mast.queensu.ca/$\mathtt{\sim}$kani/papers/idoneal.pdf}

\bibitem[Mor60]{Mor60} L. J. Mordell, \emph{The
    representation of integers by three positive squares},
  Mich. Math. J. \textbf{7} (1960), 289--290.

\bibitem[Pal40]{Pal40} G. Pall, \emph{On the arithmetic of
    quaternions}, Tran. Amer. Math. Soc. \textbf{47} (1940),
  487--500.

\bibitem[Rib00]{Rib00} P. Ribenboim, \emph{My numbers, my
    friends.}  Popular Lectures on Number Theory.
  Springer-Verlag, Berlin-Heidelberg, 2000.

\bibitem[Sar61]{Sar61} A. S\'ark\"ozy, \emph{On
    lattice-cubes in the three-space} (in Hungarian),
  Matematikai Lapok, 1961.

\bibitem[Wei73]{Wei73} P. J. Weinberger, \emph{Exponents of
    the class groups of complex quadratic fields}, Acta
  Arith. \textbf{22} (1973), 117--124.
\end{thebibliography}
\end{document}